\newtheorem{theorem}{Theorem}
\theoremstyle{plain}
\newtheorem{lemma}{Lemma}
\newtheorem{proposition}{Proposition}
\newtheorem{remark}{Remark}
\numberwithin{equation}{section}
\renewcommand\thesection{\arabic{section}} 
\renewcommand\thesubsection{\arabic{subsection}} 
\titleformat{\section}[block]{\large\scshape\centering}{\thesection.}{1em}{} 
\titleformat{\subsection}[block]{\large}{\thesection.\thesubsection.}{1em}{} 
\title{%
 \textbf{\textsc{ Singular quasilinear convective systems involving variable exponents}}}
\author{%
\textsc{Abdelkrim Moussaoui} \\[1ex] 
\normalsize Applied Mathematics Laboratory (LMA)\\ \normalsize Faculty of Exact Sciences and Biology departement\\ \normalsize  Faculty of Natural \& Life Sciences\\
\normalsize A. Mira Bejaia University, Targa Ouzemour, 06000 Bejaia, Algeria\\
\normalsize abdelkrim.moussaoui@univ-bejaia.dz \\[2ex] 
\textsc{Dany Nabab\textsuperscript{1} and Jean Velin\textsuperscript{2}} \\[1ex] 
\normalsize Departement of Mathematics and Informatic (DMI) \\\normalsize Laboratory LAMIA, Campus of Fouillole\\\normalsize Faculty of Exact and Natural Sciences \\ \normalsize University of Antilles, 97159 Pointe-\`{A}-Pitre,
Guadeloupe (FWI)\\
\normalsize \textsuperscript{1} dany.nabab@univ-antilles.fr\\
\normalsize\textsuperscript{2} jean.velin@univ-antilles.fr
}
\date{}
\begin{document}
\maketitle
\footnote{AMS Subject Classifications: 35J75; 35J48; 35J92}
\footnote{Keywords: $p(x)$-Laplacian, variable exponents, fixed point, singular system, gradient estimate, regularity.}
\vspace{-2cm}
\begin{abstract}
\noindent\textbf{Abstract.} The paper deals with the existence of solutions for quasilinear elliptic
systems involving singular and convection terms with variable exponents. Our
approach combines the sub-supersolutions method and Schauder's fixed point
theorem.
\end{abstract}

\section{Introduction}

Let $\Omega \subset {\mathbb{R}}^{N}$ $(N\geq 2)$ be a bounded domain with
smooth boundary $\partial \Omega $. Given $p_{i}\in C^{1}(\overline{\Omega }%
),$ $1<p_{i}^{-}\leq p_{i}^{+}<N$ with%
\begin{equation*}
\begin{array}{l}
p_{i}^{-}=\inf_{x\in \Omega }p_{i}(x)\text{ \ and \ }p_{i}^{+}=\sup_{x\in
\Omega }p_{i}(x),%
\end{array}%
\end{equation*}%
we deal with the following quasilinear elliptic system%
\begin{equation*}
(\mathrm{P})\qquad \left\{ 
\begin{array}{l}
-\Delta _{p_{i}(x)}u_{i}=f_{i}(x,u_{1},u_{2},\nabla u_{1},\nabla u_{2})\text{
\ in }\Omega \\ 
u_{i}>0\text{ \ in }\Omega ,\text{ \ }u_{i}=0\text{ \ on }\partial \Omega ,%
\text{ }i=1,2,%
\end{array}%
\right.
\end{equation*}%
where $-\Delta _{p_{i}(x)}$ stands for the $p_{i}(x)$-Laplacian differential
operator defined by 
\begin{equation*}
-\Delta _{p_{i}(x)}u_{i}=-div(|\nabla u_{i}|^{p_{i}(x)-2}\nabla u_{i}),\text{
for }u_{i}\in W_{0}^{1,p_{i}(x)}(\Omega ).
\end{equation*}%
The nonlinear terms $f_{1}(x,u_{1},u_{2},\nabla u_{1},\nabla u_{2})$ and $%
f_{2}(x,u_{1},u_{2},\nabla u_{1},\nabla u_{2})$ which are often expressed as
dealing with convection terms, are of Carath\'{e}odory type. Namely, for every $%
(s_{1},s_{2},\xi _{1},\xi _{2})\in (\mathbb{R}_{+}^{\star})^{2}\times \mathbb{R}^{2N}$, we assume
that $f_{i}(\cdot,s_{1},s_{2},\xi _{1},\xi _{2})$ is Lebesgue measurable in $\Omega$, and, for a.e. $x\in \Omega$, $f_{i}(x,\cdot ,\cdot ,\cdot ,\cdot )$ is
continuous in $(\mathbb{R}_{+}^{\star})^{2}\times \mathbb{R}^{2N}$. A solution of $(\mathrm{P})$ is
understood in the weak sense, that is, a pair $(u_{1},u_{2})\in
W_{0}^{1,p_{1}(x)}(\Omega )\times W_{0}^{1,p_{2}(x)}(\Omega )$ satisfying 
\begin{equation}
\int_{\Omega }|\nabla u_{i}|^{p_{i}(x)-2}\nabla u_{i}\nabla \varphi _{i}\
dx=\int_{\Omega }f_{i}(x,u_{1},u_{2},\nabla u_{1},\nabla u_{2})\varphi _{i}\
dx,  \label{7}
\end{equation}%
for all $\varphi _{i}\in W_{0}^{1,p_{i}(x)}(\Omega )$.

Our main purpose is to establish the existence and regularity of solutions
for quasilinear singular convective system $(\mathrm{P})$ satisfying the
assumption:

\begin{description}
\item[$($\textrm{H}$_{f})$] There exist constants $M_{i},m_{i}>0$, and
functions $\alpha _{i},\beta _{i},\gamma _{i},\bar{\gamma}_{i}\in C(%
\overline{\Omega })$, such that 
\begin{equation*}
m_{i}s_{1}^{\alpha _{i}(x)}s_{2}^{\beta _{i}(x)}\leq f_{i}(x,s_{1},s_{2},\xi
_{1},\xi _{2})\leq M_{i}(s_{1}^{\alpha _{i}(x)}s_{2}^{\beta
_{i}(x)}+\left\vert \xi _{1}\right\vert ^{\gamma _{i}(x)}+\left\vert \xi
_{2}\right\vert ^{\bar{\gamma}_{i}(x)}),
\end{equation*}%
for a.e. $x\in \Omega $, for all $s_{1},s_{2}>0$ and all $\xi _{1},\xi
_{2}\in 
\mathbb{R}
^{N},$ $i=1,2$.
\end{description}

The dependence of the right hand side terms on the solution and its gradient
deprives system $(\mathrm{P})$ of a variational structure. Thereby
variational methods are not applicable. Moreover, due to the presence of
convection terms, even the so called topological methods as
sub-supersolutions and fixed points technique cannot be directly implemented.
Another important feature in studying problem $(\mathrm{P})$ is that the
nonlinearities can exhibit singularities when the variables $u_{1}$ and $%
u_{2}$ approach zero. This occur through the following condition

\begin{description}
\item[$($\textrm{H}$_{\protect\alpha ,\protect\beta ,\protect\gamma })$] 
\begin{equation*}
|\alpha _{i}^{\mp }|+|\beta _{i}^{\mp }|<p_{i}^{-}-1,
\end{equation*}%
and 
\begin{equation*}
0\leq \min \{\gamma _{i}^{-},\bar{\gamma}_{i}^{-}\}\leq \max \{\gamma
_{i}^{+},\bar{\gamma}_{i}^{+}\}<p_{i}^{-}-1,
\end{equation*}%
where 
\begin{equation*}
\alpha _{i}^{\mp }:=\left\{ 
\begin{array}{ll}
\alpha _{i}^{-} & \text{if }\alpha _{i}(\cdot )>0 \\ 
\alpha _{i}^{+} & \text{if }\alpha _{i}(\cdot )<0%
\end{array}%
\right. \text{ }\beta _{i}^{\mp }:=\left\{ 
\begin{array}{ll}
\beta _{i}^{-} & \text{if }\beta _{i}(\cdot )>0 \\ 
\beta _{i}^{+} & \text{if }\beta _{i}(\cdot )<0%
\end{array}%
\right. ,\text{ }i=1,2.
\end{equation*}
\end{description}

Precisely, singularities appear in system whenever one of the exponents at
least is negative, that is, $\min \{\alpha _{1},\alpha _{2},\beta _{1},\beta
_{2}\}<0$. This represents a major hurdle to overcome. This difficulty is
heightened by the very emphasized singularity character of $(\mathrm{P})$
that stems from $($\textrm{H}$_{\alpha ,\beta ,\gamma })$ when $\alpha
_{i}^{\mp }+\beta _{i}^{\mp }<0$. In this case, hypothesis $($\textrm{H}$%
_{\alpha ,\beta ,\gamma })$ is strengthened by assuming

\begin{description}
\item[$($\textrm{\~{H}}$_{\protect\alpha ,\protect\beta ,\protect\gamma })$] %
If $\alpha _{i}^{\mp }+\beta _{i}^{\mp }<0$, we have%
\begin{equation*}
\left\vert \alpha _{i}^{\mp }\right\vert +\left\vert \beta _{i}^{\mp
}\right\vert \leq \frac{1}{Np_{i}^{\prime +}},
\end{equation*}%
and%
\begin{equation*}
0\le\gamma _{i}(x)\leq \frac{p_{1}(x)}{Np_{i}^{\prime }(x)}\text{ \ and \ }%
0\le\overline{\gamma }_{i}(x)\leq \frac{p_{2}(x)}{Np_{i}^{\prime }(x)},\text{
for }x\in \Omega .
\end{equation*}
\end{description}

Quasilinear convective system $(\mathrm{P})$ has been rarely investigated in
the literature. Actually, according to our knowledge, \cite{NV} is the only
paper that has addressed this issue in the regular case, that is when all
exponents are positive. Existence result is obtained applying the recent
topological degree of Berkovits. The virtually non-existent works devoted to
the singular case of convective systems is partly due to the involvement of
the $p_{i}(x)$-Laplacian operator. This fact results in the lack of
properties such as homogeneity making it highly challenging task to
establish a control on solutions and especially on their gradient. When $%
p_{i}(x)$ is reduced to be a constant, $-\Delta _{p_{i}(x)}$ becomes the
well-known $p_{i}$-Laplacian operator. In this respect, by relying on the a
priori gradient estimate in \cite{BE, CM}, the existence of solutions for
singular convective systems have been investigated in \cite{CLM, DM}. Still
in the context of constant exponents, for singular system $(\mathrm{P})$
defined in whole space $%
\mathbb{R}
^{N}$ we quote \cite{GMM} while for the case of Neumann boundary condition
we refer to \cite{GM}. We also mention \cite{AM2} focusing on a singular
system of type $(\mathrm{P})$ corresponding to the semilinear case, that is
when $p_{i}(x)=2$ ($i=1,2$).

When convection terms are canceled, singular system $(\mathrm{P})$ was
recently examined in \cite{AM, AMT}. In this context, depending on the sign
of $\alpha _{i}(\cdot )$ and $\beta _{i}(\cdot ),$ two complementary
structures for the system $(\mathrm{P})$ appear: cooperative and competitive
structure (see \cite{AM}). Here, the important structural disparity of the
latter makes nonlinearities $f_{1}$ and $f_{2}$ (without gradient terms)
behaving in a drastically different way. This fact has led in \cite{AMT} to
consider only the cooperative system involving logarithmic growth while in 
\cite{AM}, a separate study corresponding to each structure is required. We
emphasize that in the present work, neither cooperative nor competitive
structure on the system $(\mathrm{P})$ is imposed. In fact, these both
complementary structures for the system $(\mathrm{P})$ are handled
simultaneously without referring to them.

Our main result is stated as follows.

\begin{theorem}
\label{T1}Assume \textrm{(H}$_{f}$\textrm{)} holds. Then

$\mathbf{(i)}$ Under assumption $($\textrm{H}$_{\alpha ,\beta ,\gamma })$
with $\alpha _{i}^{\mp }+\beta _{i}^{\mp }>0$, system $(\mathrm{P})$ has a
bounded (positive) solution $\left( u_{1},u_{2}\right) $ in $C_{0}^{1,\tau }(%
\overline{\Omega })\times C_{0}^{1,\tau }(\overline{\Omega }),$ for certain $%
\tau \in (0,1),$ satisfying%
\begin{equation}
u_{i}(x)\geq c_{0}d(x),\text{ for a constant }c_{0}>0,\text{ }i=1,2,
\label{44}
\end{equation}

$\mathbf{(ii)}$ Under assumption $(\mathrm{\tilde{H}}_{\alpha ,\beta ,\gamma
})$, system $(\mathrm{P})$ admits a (positive) solution $\left(
u_{1},u_{2}\right) $ in $(W_{0}^{1,p_{1}(x)}(\Omega )\cap L^{\infty }(\Omega
))\times (W_{0}^{1,p_{2}(x)}(\Omega )\cap L^{\infty }(\Omega ))$ satisfying (%
\ref{44}).
\end{theorem}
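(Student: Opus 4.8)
The plan is to reduce the convective system $(\mathrm{P})$ to a family of non-convective problems via a freezing argument, solve those by sub-supersolution techniques, and then recover a solution of $(\mathrm{P})$ by Schauder's fixed point theorem in a suitable compact convex set. Concretely, I would first construct an ordered pair of sub- and supersolutions for $(\mathrm{P})$. The subsolution is built from the torsion-type function associated with each $p_i(x)$-Laplacian (so that $\underline{u}_i \geq c_0 d(x)$ near $\partial\Omega$), which together with the lower bound $m_i s_1^{\alpha_i(x)}s_2^{\beta_i(x)}$ in $(\mathrm{H}_f)$ yields the desired boundary behaviour \eqref{44}. The supersolution is obtained by solving an auxiliary problem with right-hand side $M_i(\overline{u}_1^{\alpha_i^{\mp}}\overline{u}_2^{\beta_i^{\mp}} + \text{const})$ where the gradient terms are absorbed using the smallness in $(\mathrm{H}_{\alpha,\beta,\gamma})$; the structural condition $|\alpha_i^{\mp}|+|\beta_i^{\mp}|<p_i^- -1$ is exactly what makes the resulting fixed-point-type estimate for $\overline{u}_i$ close, regardless of the cooperative/competitive sign pattern, since one only ever uses the worst-case exponents $\alpha_i^{\mp},\beta_i^{\mp}$.

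\textbf{The freezing map.} Next I would fix $(v_1,v_2)$ in the order interval $[\underline{u}_1,\overline{u}_1]\times[\underline{u}_2,\overline{u}_2]$ (intersected with a ball in an appropriate gradient-integrability space) and consider the frozen system $-\Delta_{p_i(x)}u_i = f_i(x,v_1,v_2,\nabla v_1,\nabla v_2)$, $u_i=0$ on $\partial\Omega$. Since $f_i(\cdot,v_1,v_2,\nabla v_1,\nabla v_2)$ is now a fixed function dominated (via $(\mathrm{H}_f)$ and the bounds on $v_i$) by a power of $d(x)^{-1}$ with exponent controlled by $|\alpha_i^{\mp}|+|\beta_i^{\mp}|<p_i^- -1$ plus a gradient term in $L^{q_i(x)}$ for suitable $q_i$, this frozen problem is a standard singular $p_i(x)$-Laplacian equation; it has a unique weak solution $u_i=\mathcal{T}_i(v_1,v_2)$, which lies in the order interval by comparison, and, under $(\mathrm{H}_{\alpha,\beta,\gamma})$, belongs to $C_0^{1,\tau}(\overline{\Omega})$ by the regularity results for the $p(x)$-Laplacian (this is case (i)); under the weaker $(\tilde{\mathrm{H}}_{\alpha,\beta,\gamma})$ the right-hand side is only guaranteed to be in the dual of $W_0^{1,p_i(x)}$ — this is where the exponent bounds $\gamma_i(x)\le p_1(x)/(Np_i'(x))$ etc. are used via Hölder in variable-exponent spaces — and one gets $u_i\in W_0^{1,p_i(x)}(\Omega)\cap L^\infty(\Omega)$ (case (ii)).

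\textbf{Fixed point and closing the argument.} I would then show $\mathcal{T}=(\mathcal{T}_1,\mathcal{T}_2)$ maps a closed convex bounded subset $\mathcal{K}$ of $C_0^1(\overline{\Omega})^2$ (resp. of $(W_0^{1,p_1(x)}\cap L^\infty)\times(W_0^{1,p_2(x)}\cap L^\infty)$ equipped with a weaker topology, in case (ii)) into itself, the invariance coming from the sub-supersolution enclosure plus a uniform a priori bound on $\|\nabla u_i\|$ that does \emph{not} depend on the frozen data — this uniform gradient estimate is precisely the delicate point flagged in the introduction, since the $p_i(x)$-Laplacian lacks homogeneity, so one must invoke the gradient estimates available for variable-exponent operators and track how the constants depend only on $\|f_i(\cdot,v_1,v_2,\nabla v_1,\nabla v_2)\|$ in the relevant (fixed) norm, which is itself bounded over $\mathcal{K}$. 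Continuity and compactness of $\mathcal{T}$ follow from the compact embedding $C^{1,\tau}\hookrightarrow C^1$ (resp. from the compactness of the solution operator of the $p_i(x)$-Laplacian together with Lebesgue dominated convergence applied to $f_i$, using the Carathéodory property and the domination in $(\mathrm{H}_f)$). Schauder's theorem then yields a fixed point $(u_1,u_2)$, which by construction solves $(\mathrm{P})$, lies in the stated regularity class, and satisfies \eqref{44}. \textbf{The main obstacle} I anticipate is the frozen-data-independent gradient bound in the variable-exponent setting — both establishing it under $(\mathrm{H}_{\alpha,\beta,\gamma})$ for the $C^{1,\tau}$ conclusion and, in case (ii), carefully balancing the Sobolev and Hölder exponents so that the convection terms $|\nabla v_i|^{\gamma_i(x)}$ remain in the dual space uniformly, which is exactly what the technical restrictions in $(\tilde{\mathrm{H}}_{\alpha,\beta,\gamma})$ are engineered to guarantee.
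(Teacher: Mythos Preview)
Your proposal is correct and follows essentially the same route as the paper: freeze the gradient, solve the auxiliary system via sub-/supersolutions built from torsion-type functions, and close with Schauder on a convex set cut out by the order interval together with a uniform gradient bound (in $C_0^1$ for case~(i), in $W_0^{1,p_i(x)}$ for case~(ii)). The ``delicate point'' you flag---the data-independent gradient estimate despite the lack of homogeneity of $-\Delta_{p_i(x)}$---is handled in the paper by a tailor-made Mean Value Theorem (their Theorem~\ref{MVT2}) that produces, for sign-constant right-hand sides, an identity of the form $\int_\Omega f(x)|\nabla u|^{p(x)-2}\nabla u\,\nabla\phi\,dx=\gamma\int_\Omega h\phi\,dx$ with $\gamma$ between the bounds of $f$; this is what lets them compare $-\Delta_{p_i(x)}(C^{\pm1}\xi_i)$ with constant multiples of the torsion data and thereby obtain both the sub-/supersolution inequalities and the $L^\infty$ gradient bound (their Lemma~\ref{L2}).
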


Our approach is chiefly based on Schauder's fixed point theorem. In this
respect, comparison arguments as well as a priori estimates are crucial to
get the appropriate localization of the desired fixed point which is
actually solution of $(\mathrm{P})$. This is achieved through a control on
solutions and their gradient which in itself represents a significant
feature of our result. At this point, the choice of suitable functions with
an adjustment of adequate constants is crucial. However, this would not be
enough without making use of the new Mean Value theorem (cf. Theorem \ref%
{MVT2} in Appendix) that is decisive to offset the lack of homogeneity
property and to deal with the variable exponents attendance. It should be
noted that the Mean Value theorem is a key ingredient in getting the
gradient estimate thus generalizing that corresponding to the case of
constant exponents problems stated in \cite{BE, CM}.

The rest of the paper is organized as follows. Sections \ref{S2} and \ref{S3}
establish gradient estimates and a priori bounds; Section \ref{S4} deals
with comparison properties; Section \ref{S5} presents the proof of the main
result while Section \ref{S6} contains the new Mean Value Theorem.

\section{A priori estimates}

\label{S2}

Let $L^{p(x)}(\Omega )$ be the generalized Lebesgue space that consists of
all measurable real-valued functions $u$ satisfying 
\begin{equation*}
\rho _{p(x)}(u)=\int_{\Omega }|u(x)|^{p(x)}dx<+\infty ,
\end{equation*}%
endowed with the Luxemburg norm%
\begin{equation*}
\begin{array}{l}
\left\Vert u\right\Vert _{p(x)}=\inf \{\tau >0:\rho _{p(x)}(\frac{u}{\tau }%
)\leq 1\}.%
\end{array}%
\end{equation*}%
Recall for any $u\in L^{p(x)}(\Omega )$ it holds%
\begin{equation}
\left\{ 
\begin{array}{l}
\left\Vert u\right\Vert _{p(x)}^{p^{-}}\leq \rho _{p(x)}(u)\leq \left\Vert
u\right\Vert _{p(x)}^{p^{+}}\text{ \ if \ }\left\Vert u\right\Vert _{p(x)}>1,
\\ 
\left\Vert u\right\Vert _{p(x)}^{p^{+}}\leq \rho _{p(x)}(u)\leq \left\Vert
u\right\Vert _{p(x)}^{p^{-}}\text{ \ if \ }\left\Vert u\right\Vert
_{p(x)}\leq 1,%
\end{array}%
\right.  \label{10}
\end{equation}%
and 
\begin{equation}
\left\Vert u\right\Vert _{p(x)}=a\text{ \ if and only if }\rho _{p(x)}(\frac{%
u}{a})=1.  \label{normro}
\end{equation}%
The variable exponent Sobolev space $W^{1,p(\cdot )}(\Omega ),$ defined by%
\begin{equation*}
\begin{array}{l}
W^{1,p(x)}(\Omega )=\{u\in L^{p(x)}(\Omega ):|\nabla u|\in L^{p(x)}(\Omega
)\},%
\end{array}%
\end{equation*}%
is endowed with the norm $\left\Vert u\right\Vert _{1,p(x)}=\left\Vert
\nabla u\right\Vert _{p(x)}$ which makes it a Banach space. In the sequel, $%
d(x):=d(x,\partial \Omega )$ denotes the euclidean distance of $x$ with
respect to the boundary $\partial \Omega $.

The next lemma is a slight modification of \cite[Lemma 5.1]{NV} which will
be useful later on.

\begin{lemma}
\label{NV_new} Let $k,m\in L^{\infty }(\Omega )$ be two real and positive
functions with $m^{-}>0$. If $u\in L^{k(x)}\left( \Omega \right) $ then $%
u^{m(x)}\in L^{\frac{k(x)}{m(x)}}\left( \Omega \right) $, and there exists $%
x_{0}\in \Omega $ such that 
\begin{equation*}
\begin{array}{l}
\left\Vert |u|^{m(x)}\right\Vert _{\frac{k(x)}{m(x)}}=\Vert u\Vert
_{k(x)}^{m(x_{0})}.%
\end{array}%
\end{equation*}
\end{lemma}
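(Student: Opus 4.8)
The plan is to settle the membership claim by a one--line modular computation and then to obtain the norm identity from a scalar intermediate value argument.

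Since $m^{-}>0$ and $k,m\in L^{\infty}(\Omega)$, the exponent $x\mapsto k(x)/m(x)$ is bounded and bounded away from $0$, hence admissible, and for $u\in L^{k(x)}(\Omega)$ one has
\[
\rho_{k(x)/m(x)}\!\left(|u|^{m(x)}\right)=\int_{\Omega}\left(|u(x)|^{m(x)}\right)^{k(x)/m(x)}\,dx=\int_{\Omega}|u(x)|^{k(x)}\,dx=\rho_{k(x)}(u)<\infty,
\]
so $|u|^{m(x)}\in L^{k(x)/m(x)}(\Omega)$. If $u=0$ a.e.\ both sides of the claimed equality vanish; so assume $u\not\equiv 0$ and set $a:=\|u\|_{k(x)}\in(0,\infty)$ and $b:=\big\||u|^{m(x)}\big\|_{k(x)/m(x)}\in(0,\infty)$.

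By \eqref{normro} applied to $\|\cdot\|_{k(x)}$ we have $\int_{\Omega}|u(x)|^{k(x)}a^{-k(x)}\,dx=1$. For $t>0$ put
\[
G(t):=\int_{\Omega}|u(x)|^{k(x)}\,a^{-t\,k(x)/m(x)}\,dx .
\]
This reduces the lemma to finding $t$ in the range of $m$ with $G(t)=1$: indeed $G(t)=\rho_{k(x)/m(x)}\big(|u|^{m(x)}/a^{t}\big)$, so $G(t)=1$ forces $b=a^{t}$ by the characterization \eqref{normro}, and writing $t=m(x_{0})$ gives $b=\|u\|_{k(x)}^{m(x_{0})}$, as desired. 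To solve $G(t)=1$, note first that $G$ is finite and continuous on $(0,\infty)$: on a bounded interval of $t$'s the weight $a^{-t\,k(x)/m(x)}$ is bounded (because $k,m$ are bounded and $m^{-}>0$), so the integrand is dominated by a constant times $|u|^{k(x)}\in L^{1}(\Omega)$ and dominated convergence applies. Writing $m^{-}=\inf_{\Omega}m$ and $m^{+}=\sup_{\Omega}m$, we have $m^{+}k(x)/m(x)\ge k(x)\ge m^{-}k(x)/m(x)$ for a.e.\ $x$; since $z\mapsto a^{-z}$ is monotone (decreasing if $a\ge 1$, increasing if $a\le 1$), integrating the resulting pointwise inequalities against $|u|^{k(x)}$ and using $\int_{\Omega}|u|^{k(x)}a^{-k(x)}=1$ shows that $G(m^{-})$ and $G(m^{+})$ lie on opposite sides of $1$ (with $G\equiv 1$ if $a=1$). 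By the intermediate value theorem, $G(t^{*})=1$ for some $t^{*}\in[m^{-},m^{+}]$; and since $m$ is continuous on the connected set $\overline{\Omega}$, its range contains $[m^{-},m^{+}]$, so $t^{*}=m(x_{0})$ for some $x_{0}\in\Omega$, which completes the argument.

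\textbf{Expected main obstacle.} The modular computation and the continuity of $G$ are routine. The substantive points are (i) choosing the comparison weight correctly so that the \emph{single} modular identity for $a$ sandwiches $G$ around $1$ at the endpoints $t=m^{\mp}$, and (ii) identifying the scalar root $t^{*}$ with an actual exponent value $m(x_{0})$ --- this is precisely where the intermediate value property (continuity) of $m$ is used, boundedness alone being too weak.
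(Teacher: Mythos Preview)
Your proof is correct and takes essentially the same route as the paper's: both hinge on a mean value argument applied to the single modular identity $\rho_{k(x)}(u/\|u\|_{k(x)})=1$. The only difference is packaging --- the paper invokes Banks' integral mean value theorem \cite[Theorem~5]{B} to pull the variable factor $\|u\|_{k(x)}^{k(x)}\big/\||u|^{m(x)}\|_{k(x)/m(x)}^{k(x)/m(x)}$ out of the integral at a single point $x_{0}$, whereas you reparametrise by a scalar $t$ and run an explicit intermediate value argument on $G$. The two are equivalent in substance; yours is self-contained and needs only continuity of $m$, while the paper's appeal to the integral MVT implicitly needs the extracted factor, hence both $k$ and $m$, to be continuous. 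You rightly flag that writing $t^{*}=m(x_{0})$ requires $m$ to have the intermediate value property; the stated hypothesis $m\in L^{\infty}(\Omega)$ is too weak for this, but the paper's proof shares the same tacit assumption, and in every application in the paper the relevant exponents lie in $C(\overline{\Omega})$, so the gap is cosmetic.
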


\begin{proof}
On account of (\ref{normro}), Mean Value theorem \cite[Theorem 5]{B} ensures
the existence of $x_{0}\in \Omega $ such that 
\begin{equation*}
\begin{array}{l}
1=\rho _{\frac{k(x)}{m(x)}}(\frac{|u|^{m(x)}}{\left\Vert
|u|^{m(x)}\right\Vert _{\frac{k(x)}{m(x)}}})=\int_{\Omega }|\frac{u}{\Vert
u\Vert _{k(x)}}|^{k(x)}\frac{\Vert u\Vert _{k(x)}^{k(x)}}{\left\Vert
|u_{1}|^{m(x)}\right\Vert _{\frac{k(x)}{m(x)}}^{\frac{k(x)}{m(x)}}}dx \\ 
=\frac{\Vert u\Vert _{k(x)}^{k(x_{0})}}{\left\Vert |u_{1}|^{m(x)}\right\Vert
_{\frac{k(x)}{m(x)}}^{\frac{k(x_{0})}{m(x_{0})}}}\rho _{k(x)}(\frac{u}{\Vert
u\Vert _{k(x)}})=\frac{\Vert u\Vert _{k(x)}^{k(x_{0})}}{\left\Vert
|u|^{m(x)}\right\Vert _{\frac{k(x)}{m(x)}}^{\frac{k(x_{0})}{m(x_{0})}}},%
\end{array}%
\end{equation*}%
showing the desired identity.
\end{proof}

A priori gradient estimate is provided in the next lemma. It is a partial
extension of \cite[Lemma $1$]{BE} to problems involving variable exponents.

\begin{lemma}
\label{L2}Let $h\in L^{\infty }(\Omega )$ be a nontrivial sign-constant
function and let $u\in W_{0}^{1,p(x)}(\Omega )$ be the weak solution of the
Dirichlet problem 
\begin{equation}
-\Delta _{p(x)}u=h(x)\text{ in }\Omega ,\text{ }u=0\text{ on }\partial
\Omega .  \label{1}
\end{equation}%
Then, there exists a constant $\bar{k}_{p}>0$, depending only on $p$, $N$,
and $\Omega $, such that 
\begin{equation}
\Vert \nabla u\Vert _{\infty }\leq \bar{k}_{p}\Vert h\Vert _{\infty }^{\frac{%
1}{p^{\pm }-1}}  \label{2}
\end{equation}%
with%
\begin{equation*}
p^{\pm }:=\left\{ 
\begin{array}{ll}
p^{-} & \text{if }\Vert h\Vert _{\infty }>1 \\ 
p^{+} & \text{if }\Vert h\Vert _{\infty }\leq 1.%
\end{array}%
\right. 
\end{equation*}
\end{lemma}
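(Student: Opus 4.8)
The plan is to split the argument into two stages -- an $L^{\infty}$ bound for $u$, then a bootstrap to the gradient -- the only real difficulty being that $-\Delta_{p(x)}$ is not homogeneous, which is circumvented by the Mean Value Theorem. First I would reduce to the case $h\ge 0$: since $h$ is sign-constant, say $h\ge 0$, the weak comparison principle for $-\Delta_{p(x)}$ gives $u\ge 0$ in $\Omega$, and the case $h\le 0$ follows by replacing $u$ with $-u$. Set $M:=\Vert h\Vert_{\infty}$.

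Second, I would prove an $L^{\infty}$ estimate of the form $\Vert u\Vert_{\infty}\le c_{1}M^{1/(p^{\pm}-1)}$. Testing the weak formulation of (\ref{1}) with the truncations $(u-k)^{+}$, $k\ge 0$, and combining H\"older's inequality in $L^{p(x)}(\Omega)$, the Sobolev embedding $W_{0}^{1,p(x)}(\Omega)\hookrightarrow L^{q(x)}(\Omega)$, Lemma \ref{NV_new}, and the modular--norm relations (\ref{10}), one runs a Stampacchia iteration on the super-level sets $\{u>k\}$ and obtains $\Vert u\Vert_{\infty}\le c_{1}M^{1/(p^{\pm}-1)}$; the dichotomy $p^{-}$ versus $p^{+}$ is forced precisely by (\ref{10}) when converting between the modular $\rho_{p(x)}(\nabla u)$ and the norm $\Vert\nabla u\Vert_{p(x)}$, according to whether $M>1$ or $M\le 1$. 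Equivalently, one may compare $u$ with the unique solution $z$ of $-\Delta_{p(x)}z=M$, $z=0$ on $\partial\Omega$, for which $0\le u\le z$ and the same bound holds.

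Third -- the crux -- I would upgrade to $\Vert\nabla u\Vert_{\infty}\le\bar{k}_{p}M^{1/(p^{\pm}-1)}$. Since $h$ is bounded and $\partial\Omega$ is smooth, the global $C^{1,\tau}$-regularity theory for equations driven by the $p(x)$-Laplacian (\cite{CM}, together with boundary regularity) yields $u\in C_{0}^{1,\tau}(\overline{\Omega})$ and a bound $\Vert\nabla u\Vert_{\infty}\le C$ with $C$ depending on $p,N,\Omega$ and monotonically on $(\Vert u\Vert_{\infty},M)$; substituting the $L^{\infty}$ bound of the previous step reduces $C$ to a dependence on $p,N,\Omega$ and $M$ only. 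To extract the precise power of $M$ -- which is \emph{not} available by rescaling, as $-\Delta_{p(x)}$ fails to be homogeneous -- I would freeze the exponent: applying the new Mean Value Theorem \ref{MVT2} to the relevant energy identity produces a point $x_{0}\in\overline{\Omega}$ at which the variable-exponent quantities collapse to their constant-exponent analogues at the value $p(x_{0})$, thereby reducing the estimate to the constant-exponent gradient bound \cite[Lemma 1]{BE} with $p$ replaced by $p(x_{0})$, i.e. $\Vert\nabla u\Vert_{\infty}\le\bar{k}\,M^{1/(p(x_{0})-1)}$. Since $p^{-}\le p(x_{0})\le p^{+}$ and $t\mapsto M^{1/(t-1)}$ is monotone, one has $M^{1/(p(x_{0})-1)}\le M^{1/(p^{-}-1)}$ if $M>1$ and $M^{1/(p(x_{0})-1)}\le M^{1/(p^{+}-1)}$ if $M\le 1$, that is $M^{1/(p(x_{0})-1)}\le M^{1/(p^{\pm}-1)}$; absorbing all constants into $\bar{k}_{p}$ gives (\ref{2}).

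The main obstacle is exactly this last step: the loss of homogeneity blocks the usual rescaling that produces the power $M^{1/(p-1)}$ in the constant-exponent setting, so one must instead substitute a single constant exponent $p(x_{0})$ for $p(\cdot)$ without spoiling the sharpness in $M$, which is precisely what Theorem \ref{MVT2} is designed to do. A secondary technical point is bookkeeping: ensuring that the constants in the $L^{\infty}$ estimate and in the $C^{1,\tau}$ estimate ultimately depend only on $p,N,\Omega$ (and the explicit factor $M^{1/(p^{\pm}-1)}$) once the dependence on $\Vert u\Vert_{\infty}$ has been eliminated.
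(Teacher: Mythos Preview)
Your plan has the right ingredients but assembles them in the wrong order, and Step~3 contains a genuine gap. Theorem~\ref{MVT2} does \emph{not} convert the $p(x)$-Laplacian into a $p(x_{0})$-Laplacian; it only lets you replace a Lipschitz scalar multiplier $f(x)$ inside $\int_\Omega f(x)|\nabla u|^{p(x)-2}\nabla u\,\nabla\phi\,dx$ by a single value $\gamma\in[\min f,\max f]$. So there is no mechanism by which it ``reduces the estimate to the constant-exponent gradient bound \cite[Lemma~1]{BE} with $p$ replaced by $p(x_{0})$'': the operator remains $-\Delta_{p(x)}$, and \cite{BE} cannot be invoked. Moreover, the $C^{1,\tau}$ regularity theorem you cite gives a bound depending on $\Vert h\Vert_\infty$ in an \emph{implicit} way, so even after Step~2 you cannot simply ``substitute the $L^\infty$ bound'' and read off the power $M^{1/(p^\pm-1)}$.

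The paper's route is both shorter and avoids this difficulty by using the MVT for what it actually does. First, for $\Vert h\Vert_\infty\le 1$ one compares $u$ directly with the $p(x)$-torsion function $\xi$ (solving $-\Delta_{p(x)}\xi=1$): weak comparison gives $u\le\xi$, and Fan's $C^{1,\tau}$ regularity \cite{F} gives $\Vert u\Vert_{C^{1,\tau}}\le\bar k_p$ with $\bar k_p$ independent of $h$. Second, for $\Vert h\Vert_\infty>1$ one sets $v:=\Vert h\Vert_\infty^{-1/(p^{-}-1)}u$ and observes
\[
\int_\Omega|\nabla v|^{p(x)-2}\nabla v\,\nabla\varphi\,dx
=\int_\Omega \Vert h\Vert_\infty^{-\frac{p(x)-1}{p^{-}-1}}\,|\nabla u|^{p(x)-2}\nabla u\,\nabla\varphi\,dx.
\]
Now Theorem~\ref{MVT2} is applied with the Lipschitz multiplier $f(x)=\Vert h\Vert_\infty^{-(p(x)-1)/(p^{-}-1)}$, yielding a value $\gamma=\Vert h\Vert_\infty^{-(p(x_0)-1)/(p^{-}-1)}\le\Vert h\Vert_\infty^{-1}$; hence the right-hand side is $\le\int_\Omega\Vert h\Vert_\infty^{-1}h\varphi\le\int_\Omega\varphi$. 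Thus $v$ is a weak subsolution of the torsion problem, and the first case gives $\Vert\nabla v\Vert_\infty\le\bar k_p$, i.e. $\Vert\nabla u\Vert_\infty\le\bar k_p\Vert h\Vert_\infty^{1/(p^{-}-1)}$. No separate $L^\infty$ step, no Stampacchia iteration, and no appeal to \cite{BE} are needed. The rescaling comes \emph{before} the MVT, not after the regularity estimate.
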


\begin{proof}
First, assume that $\Vert h\Vert _{\infty }\leq 1.$ Multiplying problem (\ref%
{1}) by $\varphi \in W_{0}^{1,p(.)}(\Omega ),$ with $\varphi \geq 0,$
integrating over $\Omega $ we obtain 
\begin{equation*}
\int_{\Omega }|\nabla u|^{p(x)-2}\nabla u\nabla \varphi \text{ }%
dx=\int_{\Omega }h(x)\varphi \text{ }dx\leq \int_{\Omega }\varphi \text{ }%
dx=\int_{\Omega }|\nabla \xi |^{p(x)-2}\nabla \xi \nabla \varphi \text{ }dx,
\end{equation*}%
where $\xi (x)$ is the $p(x)$-torsion function defined by 
\begin{equation*}
-\Delta _{p(x)}\xi =1\text{ in }\Omega ,\text{ \ }\xi =0\text{ on }\partial
\Omega .
\end{equation*}%
The weak comparison principle implies $\Vert u\Vert _{\infty }\leq \Vert \xi
\Vert _{\infty }$ while the regularity theorem in \cite{F} ensures the
existence of constants $\tau \in (0,1)$ and $\bar{k}_{p}>0$ such that $\Vert
u\Vert _{C^{1,\tau }(\overline{\Omega })}\leq \bar{k}_{p}.$

Now we deal with the case $\Vert h\Vert _{\infty }>1$. By Theorem \ref{MVT2} in the Appendix, there exists $x_{0}\in
\Omega $ such that%
\begin{equation*}
\begin{array}{l}
\int_{\Omega }|\nabla (\Vert h\Vert _{\infty }^{\frac{-1}{p^{-}-1}%
}u)|^{p(x)-2}\nabla (\Vert h\Vert _{\infty }^{\frac{-1}{p^{-}-1}}u)\nabla
\varphi \text{ }dx \\ 
=\int_{\Omega }\Vert h\Vert _{\infty }^{\frac{-(p(x)-1)}{p^{-}-1}}|\nabla
u|^{p(x)-2}\nabla u\nabla \varphi \text{ }dx \\ 
=\Vert h\Vert _{\infty }^{\frac{-(p(x_{0})-1)}{p^{-}-1}}\int_{\Omega
}|\nabla u|^{p(x)-2}\nabla u\nabla \varphi \text{ }dx=\Vert h\Vert _{\infty
}^{\frac{-(p(x_{0})-1)}{p^{-}-1}}\int_{\Omega }h(x)\varphi \text{ }dx \\ 
\leq \Vert h\Vert _{\infty }^{\frac{-(p^{-}-1)}{p^{-}-1}}\int_{\Omega
}h(x)\varphi \text{ }dx=\int_{\Omega }\Vert h\Vert _{\infty
}^{-1}h(x)\varphi \text{ }dx\leq \int_{\Omega }\varphi \text{ }dx.%
\end{array}%
\end{equation*}%
Thus, in view of the previous argument, it follows that 
\begin{equation*}
\Vert h\Vert _{\infty }^{\frac{-1}{p^{-}-1}}|u|_{1,\alpha }\leq \bar{k}_{p},
\end{equation*}%
showing that (\ref{2}) holds true. This ends the proof.
\end{proof}

The case when $h$ in (\ref{1}) is not an $L^{\infty }$-bounded function is
handled in the next lemma which provides an a priori $L^{\infty }$-estimate
of solutions for (\ref{1}).

\begin{lemma}
\label{L4}Assume $h\in L^{p^{\prime }(x)}(\Omega )\cap L^{N}(\Omega )$ in (%
\ref{1}). Then, there exists a constant $C>0$, depending only on $N,p$ and $%
\Omega $, such that%
\begin{equation}
\begin{array}{l}
\Vert u\Vert _{\infty }\leq C\left\Vert h\right\Vert _{L^{N}(\Omega )}^{%
\frac{1}{p^{\pm }-1}},%
\end{array}
\label{u}
\end{equation}%
with%
\begin{equation*}
p^{\pm }:=\left\{ 
\begin{array}{ll}
p^{-} & \text{if }\left\Vert h\right\Vert _{L^{N}(\Omega )}>1 \\ 
p^{+} & \text{if }\left\Vert h\right\Vert _{L^{N}(\Omega )}\leq 1.%
\end{array}%
\right.
\end{equation*}
\end{lemma}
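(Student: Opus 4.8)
The plan is to prove the $L^\infty$-bound for the solution of $-\Delta_{p(x)} u = h$ when $h \in L^{p'(x)}(\Omega) \cap L^N(\Omega)$ by a Moser-type iteration (De Giorgi--Stampacchia truncation) adapted to the variable-exponent setting, together with a scaling argument that mirrors the one used in Lemma~\ref{L2}.

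\textbf{Step 1: reduction by scaling.} First I would reduce to the case $\Vert h \Vert_{L^N(\Omega)} \le 1$. Suppose $\Vert h \Vert_{L^N(\Omega)} > 1$ and set $v := \Vert h \Vert_{L^N(\Omega)}^{-1/(p^--1)} u$. Using Theorem~\ref{MVT2} exactly as in the proof of Lemma~\ref{L2}, there is $x_0 \in \Omega$ with
\begin{equation*}
-\Delta_{p(x)} v = \Vert h \Vert_{L^N(\Omega)}^{-(p(x_0)-1)/(p^--1)}\, h =: \tilde h \quad \text{in } \Omega,
\end{equation*}
and since $p(x_0) \ge p^-$ and $\Vert h \Vert_{L^N(\Omega)} > 1$, the exponent $-(p(x_0)-1)/(p^--1) \le -1$, so $\Vert \tilde h \Vert_{L^N(\Omega)} \le \Vert h \Vert_{L^N(\Omega)}^{-1}\Vert h \Vert_{L^N(\Omega)} = 1$. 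Thus it suffices to prove $\Vert u \Vert_\infty \le C$ whenever $\Vert h \Vert_{L^N(\Omega)} \le 1$, where $C = C(N,p,\Omega)$; undoing the scaling then yields \eqref{u} with $p^\pm = p^-$, and the case $\Vert h\Vert_{L^N(\Omega)}\le 1$ is already in the desired form with $p^\pm=p^+$ after a similar bookkeeping on the exponent (here one keeps track of the sign of $1/(p^\pm-1)$ and uses $\Vert h\Vert_\infty^{1/(p^+-1)}\le\Vert h\Vert_\infty^{1/(p^--1)}$-type inequalities).

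\textbf{Step 2: truncation and Sobolev embedding.} With $\Vert h \Vert_{L^N(\Omega)} \le 1$ fixed, for $k \ge 0$ test \eqref{7} (for the scalar problem) with $\varphi = (u-k)^+ \in W_0^{1,p(x)}(\Omega)$, which is admissible. On the set $A_k := \{u > k\}$ this gives
\begin{equation*}
\int_{A_k} |\nabla u|^{p(x)}\, dx = \int_{A_k} h\,(u-k)^+\, dx \le \Vert h \Vert_{L^N(\Omega)}\, \Vert (u-k)^+ \Vert_{L^{p^*_-}(\Omega)}\, |A_k|^{1 - \frac1N - \frac{1}{p^*_-}},
\end{equation*}
by Hölder, where $p^*_- = \frac{Np^-}{N-p^-}$ is the Sobolev conjugate of $p^-$. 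Using the relation \eqref{10} between the modular and the norm, together with the Poincaré and Sobolev inequalities in $W_0^{1,p(x)}(\Omega)$ (which embed into $L^{p^*_-}(\Omega)$ since $p(x) \ge p^-$), one converts the left-hand side into a power of $\Vert (u-k)^+\Vert_{L^{p^*_-}(\Omega)}$. This produces an inequality of the form
\begin{equation*}
\Vert (u-k)^+ \Vert_{L^{p^*_-}(\Omega)} \le C\, |A_k|^{\delta}
\end{equation*}
for some $\delta > 1/p^*_-$, after distinguishing the cases $\Vert(u-k)^+\Vert\gtrless 1$ in \eqref{10}; the exponent $\delta > 1/p^*_-$ is exactly what is needed for the Stampacchia lemma.

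\textbf{Step 3: Stampacchia's iteration lemma.} For $h' > k \ge 0$ one has $|A_{h'}| \le (h'-k)^{-p^*_-}\int_{A_k}|(u-k)^+|^{p^*_-}\,dx$, hence combining with Step~2,
\begin{equation*}
|A_{h'}| \le \frac{C^{p^*_-}}{(h'-k)^{p^*_-}}\, |A_k|^{\delta p^*_-}, \qquad \delta p^*_- > 1,
\end{equation*}
and the classical Stampacchia lemma (e.g. Kinderlehrer--Stampacchia) yields $|A_{k_0}| = 0$ for some explicit $k_0 \le C\,|A_0|^{(\delta - 1/p^*_-)} \le C(N,p,\Omega)$ since $|A_0| \le |\Omega|$. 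Therefore $u \le k_0$ a.e.; applying the same argument to $-u$ (which solves $-\Delta_{p(x)}(-u) = -h$, using that $x \mapsto |\xi|^{p(x)-2}\xi$ is odd) gives the two-sided bound $\Vert u \Vert_\infty \le C(N,p,\Omega)$. Tracing back through Step~1 gives \eqref{u}.

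\textbf{Main obstacle.} The delicate point is Step~2: in the variable-exponent setting the modular $\int_{A_k}|\nabla u|^{p(x)}\,dx$ is not a single power of a norm, so one must carefully split into the regimes $\Vert(u-k)^+\Vert_{1,p(x)}\ge 1$ and $<1$ and verify that in \emph{both} regimes the resulting exponent on $|A_k|$ strictly exceeds $1/p^*_-$ — this is where the hypothesis $p^+ < N$ (ensuring $p^- < N$ and a genuine Sobolev embedding) is essential, and where the constant's dependence on $p$ (through $p^-, p^+, p^*_-$) enters. One also has to make sure the membership $h \in L^{p'(x)}(\Omega)$ guarantees the right-hand side integral is finite so that $(u-k)^+$ is a legitimate test function and the first equality above is valid; the $L^N$-bound is then what drives the quantitative estimate, while $L^{p'(x)}$ is the qualitative admissibility hypothesis.
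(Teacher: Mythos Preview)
Your truncation strategy is the right starting point and is also what the paper uses, but there are two concrete gaps. First, Step~1 misapplies Theorem~\ref{MVT2}: the point $x_0$ produced there depends on the test function $\varphi$, so you cannot conclude that $v=\Vert h\Vert_{L^N}^{-1/(p^--1)}u$ solves a genuine PDE $-\Delta_{p(x)}v=\tilde h$ with a \emph{fixed} right-hand side; moreover Theorem~\ref{MVT2} requires $h$ to be sign-constant, which Lemma~\ref{L4} does not assume. This scaling step is in fact unnecessary---the paper does not use it---so you should drop it and let $\Vert h\Vert_{L^N}$ sit in the estimates. Second, and more seriously, the splitting you propose in Step~2 according to whether $\Vert(u-k)^+\Vert_{1,p(x)}\gtrless1$ does not deliver the Stampacchia exponent in the small-norm regime: there \eqref{10} gives $\rho\ge\Vert\cdot\Vert^{p^+}$, and the resulting exponent on $|A_k|$ is $(p^--1)/(p^-(p^+-1))$, which need not exceed $1/p^*_-$ when $p^+$ is close to $N$ (take e.g.\ $N=4$, $p^-=2$, $p^+=3.5$). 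Since for large $k$ the norm is eventually $<1$, you are stuck precisely where the iteration must close.

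The paper avoids this by splitting \emph{pointwise} on $\{|\nabla u|\ge1\}$ versus $\{|\nabla u|<1\}$ rather than on the norm, and---instead of Stampacchia iteration---obtains a direct contradiction at a single level $k>k_0$: testing with $(u-k)^+$ and using only the embedding $W^{1,1}\hookrightarrow L^{N'}$ together with $\Vert u\Vert_{L^{N'}(A_k)}\ge k|A_k|^{1/N'}$, it shows that the inequality $\int_{A_k}|\nabla u|^{p(x)}dx\le\Vert h\Vert_{L^N}\Vert(u-k)^+\Vert_{L^{N'}}$ forces each of two nonnegative summands to be $\le0$, impossible if $|A_k|>0$. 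Your Stampacchia route can also be repaired with the same pointwise split (it yields $\int_{A_k}|\nabla u|^{p^-}dx\le\int_{A_k}|\nabla u|^{p(x)}dx+|A_k|$, which is enough to get $\delta=1/p^-$ and hence $\delta p^*_->1$), but as written the obstacle you flag is not merely delicate---it is fatal to the norm-based split.
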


\begin{proof}
For each $k\in 
\mathbb{N}
,$ consider the set $A_{k}=\{x\in \Omega :u(x)>k\}$, where $u$ is the
solution of (\ref{1}). Thus, to prove (\ref{u}) amounts to show that%
\begin{equation*}
|A_{k}|=0\,\,\mbox{ for any }\,\,k>k_{0},
\end{equation*}%
where 
\begin{equation}
k_{0}:=C\left\Vert h\right\Vert _{L^{N}(\Omega )}^{\frac{1}{p^{\pm }-1}},
\label{C_omega}
\end{equation}%
with a constant $C>0$ that will be chosen later on. By contradiction assume
that there exists $k>k_{0}$ such that $|A_{k}|\neq 0$. Testing (\ref{1})
with $(u-k)^{+}$ leads to 
\begin{equation}
\int_{A_{k}}\left\vert \nabla u\right\vert ^{p(x)}dx=\int_{A_{k}}h\left(
x\right) (u-k)^{+}dx\leq \left\Vert h\right\Vert _{L^{N}(\Omega )}\left\Vert
(u-k)^{+}\right\Vert _{L^{N^{\prime }}(A_{k})}.  \label{a_k_1}
\end{equation}%
One has 
\begin{equation}
\begin{array}{l}
\int_{A_{k}}\left\vert \nabla u\right\vert ^{p(x)}dx\geq \Vert \nabla u\Vert
_{L^{p^{-}}(A_{k}^{-})}^{p^{-}}+\Vert \nabla u\Vert
_{L^{p^{+}}(A_{k}^{+})}^{p^{+}},%
\end{array}%
\end{equation}%
where $A_{k}^{-}:=A_{k}\cap \{|\nabla u|\geq 1\}$ and $A_{k}^{+}:=A_{k}\cap
\{|\nabla u|<1\}$. H\"{o}lder's inequality together with classical Sobolev
and Lebesgue embeddings imply 
\begin{equation}
\begin{array}{l}
\left\Vert (u-k)^{+}\right\Vert _{L^{N^{\prime }}(A_{k})}\leq
C_{1}\int_{A_{k}}|\nabla u|dx \\ 
\leq C_{1}\left( |A_{k}^{-}|^{\frac{p^{-}-1}{p^{-}}}\Vert \nabla u\Vert
_{L^{p^{-}}(A_{k}^{-})}+|A_{k}^{+}|^{\frac{p^{+}-1}{p^{+}}}\Vert \nabla
u\Vert _{L^{p^{+}}(A_{k}^{+})}\right) ,%
\end{array}
\label{a_k_2}
\end{equation}%
as well as%
\begin{equation}
\begin{array}{l}
\Vert \nabla u\Vert _{L^{p^{\pm }}(A_{k}^{\pm })}^{p^{\pm }}\geq \Vert
\nabla u\Vert _{L^{p^{\pm }}(A_{k}^{\pm })}(\Vert \nabla u\Vert
_{L^{1}(A_{k}^{\pm })}|A_{k}^{\pm }|^{-\frac{p^{\pm }-1}{p^{\pm }}})^{p^{\pm
}-1} \\ 
\geq \Vert \nabla u\Vert _{L^{p^{\pm }}(A_{k}^{\pm })}(C_{1}^{-1}\Vert
u\Vert _{L^{N^{\prime }}(A_{k}^{\pm })}|A_{k}^{\pm }|^{-\frac{p^{\pm }-1}{%
p^{\pm }}})^{p^{\pm }-1} \\ 
\geq \Vert \nabla u\Vert _{L^{p^{\pm }}(A_{k}^{\pm
})}(C_{1}^{-1}k|A_{k}^{\pm }|^{\frac{1}{N^{\prime }}-\frac{p^{\pm }-1}{%
p^{\pm }}})^{p^{\pm }-1},%
\end{array}
\label{ak3}
\end{equation}%
for a certain constant $C_{1}>0$. Then, gathering (\ref{a_k_1})-(\ref{ak3})
together, we infer that 
\begin{equation}
\begin{array}{l}
\Vert \nabla u\Vert _{L^{p^{-}}(A_{k}^{-})}\left[ (C_{1}^{-1}k|A_{k}^{-}|^{%
\frac{1}{N^{\prime }}-\frac{p^{-}-1}{p^{-}}})^{p^{-}-1}-C_{1}\left\Vert
h\right\Vert _{L^{N}(\Omega )}|A_{k}^{-}|^{\frac{p^{-}-1}{p^{-}}}\right] \\ 
+\Vert \nabla u\Vert _{L^{p^{+}}(A_{k}^{+})}\left[ (C_{1}^{-1}k|A_{k}^{+}|^{%
\frac{1}{N^{\prime }}-\frac{p^{+}-1}{p^{+}}})^{p^{+}-1}-C_{1}\left\Vert
h\right\Vert _{L^{N}(\Omega )}|A_{k}^{+}|^{\frac{p^{+}-1}{p^{+}}}\right] \\ 
\leq 0.%
\end{array}
\label{combinaison}
\end{equation}%
Now, fix $C>0$ in (\ref{C_omega}) as follows \ 
\begin{equation*}
C:=\max \{C_{1}^{p^{-\prime }},C_{1}^{p^{+\prime }}\}|\Omega |^{1-\frac{1}{%
N^{\prime }}}.
\end{equation*}%
Then, for $k>k_{0},$ it follows that%
\begin{equation*}
\left[ (C_{1}^{-1}k|A_{k}^{\pm }|^{\frac{1}{N^{\prime }}-\frac{p^{\pm }-1}{%
p^{\pm }}})^{p^{\pm }-1}-C_{1}\left\Vert h\right\Vert _{L^{N}(\Omega
)}|A_{k}^{\pm }|^{\frac{p^{\pm }-1}{p^{\pm }}}\right] \geq 0,
\end{equation*}%
which contradicts (\ref{combinaison}). This completes the proof.
\end{proof}

\section{An auxiliary system}

\label{S3}

For each $(z_{1},z_{2})\in W_{0}^{1,p_{1}(x)}(\Omega )\times
W_{0}^{1,p_{2}(x)}(\Omega ),$ we consider the auxiliary problem 
\begin{equation*}
(\mathrm{P}_{(z_{1},z_{2})})\qquad \left\{ 
\begin{array}{l}
-\Delta _{p_{i}(x)}u_{i}=f_{i}(x,z_{1},z_{2},\nabla z_{1},\nabla z_{2})\text{
\ in }\Omega \\ 
u_{i}=0\text{ \ on }\partial \Omega ,\text{ }i=1,2.%
\end{array}%
\right.
\end{equation*}

\begin{lemma}
\label{weak_bound} Assume $($\textrm{\~{H}}$_{\alpha ,\beta ,\gamma })$
holds. Suppose that 
\begin{equation}
z_{i}(x)\geq \tilde{c}d(x)\text{ \ and \ }\Vert \nabla z_{i}\Vert
_{p_{i}(x)}\leq \tilde{L},\,  \label{99}
\end{equation}%
for some constants $\tilde{c},\tilde{L}>0$ independent on $z_{1}$ and $z_{2}$%
. Then, for $\tilde{L}$ large enough, problem $(\mathrm{P}_{(z_{1},z_{2})})$
admits a unique solution $(u_{1},u_{2})$ in $W_{0}^{1,p_{1}(x)}(\Omega
)\times W_{0}^{1,p_{2}(x)}(\Omega )$ satisfying 
\begin{equation}
\Vert \nabla u_{i}\Vert _{p_{i}(x)}\leq \tilde{L},\,\,i=1,2.
\label{estimation_z}
\end{equation}
\end{lemma}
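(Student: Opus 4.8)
The plan is to solve $(\mathrm{P}_{(z_1,z_2)})$ componentwise. For fixed $(z_1,z_2)$ satisfying \eqref{99}, the right-hand side $g_i(x):=f_i(x,z_1(x),z_2(x),\nabla z_1(x),\nabla z_2(x))$ is a fixed nonnegative measurable function, so each equation $-\Delta_{p_i(x)}u_i=g_i$ is a standard scalar $p_i(x)$-Laplacian Dirichlet problem. Existence and uniqueness of a weak solution $u_i\in W_0^{1,p_i(x)}(\Omega)$ then follow from the strict monotonicity and coercivity of $-\Delta_{p_i(x)}$ (Minty–Browder), once we verify $g_i$ lies in the dual space $W^{-1,p_i'(x)}(\Omega)$. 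The real content is the a priori bound \eqref{estimation_z}, which must be shown to be \emph{self-improving}: the same $\tilde L$ that bounds $\nabla z_i$ must bound $\nabla u_i$.

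First I would estimate $\|g_i\|$ in the appropriate space. Using $($\textrm{H}$_f)$, $0\le g_i(x)\le M_i\big(z_1^{\alpha_i(x)}z_2^{\beta_i(x)}+|\nabla z_1|^{\gamma_i(x)}+|\nabla z_2|^{\bar\gamma_i(x)}\big)$. For the singular term, the lower bound $z_j(x)\ge\tilde c\,d(x)$ in \eqref{99} controls $z_1^{\alpha_i(x)}z_2^{\beta_i(x)}$ from above when exponents are negative, reducing matters to integrability of $d(x)^{-(|\alpha_i^{\mp}|+|\beta_i^{\mp}|)}$; the hypothesis $|\alpha_i^{\mp}|+|\beta_i^{\mp}|\le \frac{1}{Np_i'^{+}}$ in $($\textrm{\~H}$_{\alpha,\beta,\gamma})$ is exactly what makes this term belong to $L^{N}(\Omega)$ (and hence to $L^{p_i'(x)}(\Omega)$). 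For the gradient terms, since $\gamma_i(x)\le \frac{p_1(x)}{Np_i'(x)}$ and $\bar\gamma_i(x)\le\frac{p_2(x)}{Np_i'(x)}$, Lemma~\ref{NV_new} shows $|\nabla z_j|^{\gamma_i(x)}$ lies in $L^{p_i'(x)}(\Omega)$ with norm controlled by a power $\|\nabla z_j\|_{p_j(x)}^{\theta}$ where $\theta<1$ (precisely because the exponent ratio forces the sublinear regime), and similarly these quantities land in $L^N(\Omega)$. Thus $\|g_i\|_{L^{p_i'(x)}(\Omega)\cap L^N(\Omega)}\le c\big(1+\tilde c^{-\sigma}+\tilde L^{\theta_1}+\tilde L^{\theta_2}\big)$ for exponents $\theta_1,\theta_2<1$.

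Next I would feed this into the a priori estimates already established. Testing the $i$-th equation with $u_i$ and using coercivity gives, via \eqref{10} and the generalized Hölder inequality, a bound of the form $\|\nabla u_i\|_{p_i(x)}^{p_i^-}\le \|g_i\|_{p_i'(x)}\,\|u_i\|_{p_i(x)}$ (in the regime $\|\nabla u_i\|_{p_i(x)}>1$), and Poincaré's inequality in $W_0^{1,p_i(x)}(\Omega)$ turns this into $\|\nabla u_i\|_{p_i(x)}^{p_i^--1}\le C\|g_i\|_{p_i'(x)}\le C'\big(1+\tilde c^{-\sigma}+\tilde L^{\theta_1}+\tilde L^{\theta_2}\big)$. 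Since $p_i^--1>0$ is fixed and $\theta_1,\theta_2<1$, the function $\tilde L\mapsto C'(1+\tilde c^{-\sigma}+\tilde L^{\theta_1}+\tilde L^{\theta_2})^{1/(p_i^--1)}$ grows strictly sublinearly in $\tilde L$, so for $\tilde L$ sufficiently large (depending on $\tilde c$, the data, and $\Omega$, but not on $z_1,z_2$) it is $\le\tilde L$. That is precisely \eqref{estimation_z}.

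The main obstacle is the bookkeeping in the integrability step: one must check carefully that with \emph{variable} exponents the compositions $z_1^{\alpha_i(x)}z_2^{\beta_i(x)}$, $|\nabla z_1|^{\gamma_i(x)}$, $|\nabla z_2|^{\bar\gamma_i(x)}$ genuinely lie in $L^{p_i'(x)}(\Omega)\cap L^N(\Omega)$ and, crucially, that the resulting norm bound involves $\tilde L$ only to a power strictly below $1$ — this is where $($\textrm{\~H}$_{\alpha,\beta,\gamma})$ is used in full force, and where Lemma~\ref{NV_new} converts variable-exponent powers into a clean scalar power $\|\cdot\|^{m(x_0)}$ with $m(x_0)<1$. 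A secondary subtlety is confirming $g_i\in W^{-1,p_i'(x)}(\Omega)$ so that existence/uniqueness of $u_i$ is legitimate; the embedding $L^{p_i'(x)}(\Omega)\hookrightarrow W^{-1,p_i'(x)}(\Omega)$ (or rather $L^{(p_i^*(x))'}$) handles this once the integrability is in place. Everything else — monotone operator theory for existence, the self-improving fixed-$\tilde L$ argument — is routine given the earlier lemmas.
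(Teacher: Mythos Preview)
Your approach is essentially the one in the paper: show $g_i:=f_i(\cdot,z_1,z_2,\nabla z_1,\nabla z_2)\in L^{p_i'(x)}(\Omega)\cap L^N(\Omega)$ via the lower barrier $z_j\ge \tilde c\,d(x)$, Lemma~\ref{NV_new}, and the Lazer--McKenna integrability of $d(x)^{-s}$; invoke Minty--Browder for existence/uniqueness; then test with $u_i$ and close the loop by a self-improvement argument on $\tilde L$. The paper carries out the last step with the $L^N$--$L^{N'}$ pairing and Lemma~\ref{NV_new} (to write $\int|\nabla u_i|^{p_i(x)}=\|\nabla u_i\|_{p_i(x)}^{p_i(x_4^i)}$), whereas you use the $L^{p_i'(x)}$--$L^{p_i(x)}$ pairing and coercivity via \eqref{10}; both are valid variations.

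There is, however, one genuine slip in your closing step. You assert that because $\theta_1,\theta_2<1$ and $p_i^--1>0$, the map $\tilde L\mapsto \big(1+\tilde c^{-\sigma}+\tilde L^{\theta_1}+\tilde L^{\theta_2}\big)^{1/(p_i^--1)}$ grows strictly sublinearly. This inference fails when $p_i^-<2$: if, say, $p_i^--1=\tfrac12$ and $\theta=0.9$, the right-hand side grows like $\tilde L^{1.8}$. What you actually need (and what the paper uses) is $\theta_j<p_i^--1$, not merely $\theta_j<1$. This stronger bound does hold: $(\mathrm{H}_{\alpha,\beta,\gamma})$ gives $\max\{\gamma_i^+,\bar\gamma_i^+\}<p_i^--1$ directly, and $(\mathrm{\tilde H}_{\alpha,\beta,\gamma})$ forces $|\alpha_i^\mp|+|\beta_i^\mp|\le \frac{1}{N p_i'^{+}}<p_i^--1$. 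So the conclusion survives, but your stated justification must be replaced by the correct exponent comparison $\theta_j<p_i^--1$.
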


\begin{proof}
First, we claim that 
\begin{equation}
f_{i}(.,z_{1},z_{2},\nabla z_{1},\nabla z_{2})\in L^{p_{i}^{\prime }(x)
}(\Omega )\cap L^{N}(\Omega ),\,\text{for }i=1,2.  \label{belonging}
\end{equation}%
We only show that $f_{i}\in L^{p_{i}^{\prime }(x) }(\Omega )$ in (\ref%
{belonging}) because $f_{i}\in L^{N}(\Omega )$ can be justified similarly by
substituting $p_{i}^{\prime }(.)$ with $N$ in the argument below. By $($%
\textrm{H}$_{f})$ we have%
\begin{equation}
\begin{array}{l}
\left\Vert f_{i}\left(.,z_{1},z_{2},\nabla z_{1},\nabla z_{2}\right)
\right\Vert _{p_{i}^{\prime }(x) } \\ 
\leq M_{i}\left( ||z_{1}^{\alpha _{i}(.)}z_{2}^{\beta
_{i}(.)}||_{p_{i}^{\prime }(x) }+\left\Vert |\nabla z_{1}|^{\gamma
_{i}(.)}\right\Vert _{p_{i}^{\prime }(x) }+\left\Vert |\nabla z_{2}|^{%
\overline{\gamma }_{i}(.)}\right\Vert _{p_{i}^{\prime }(x) }\right) .%
\end{array}
\label{f}
\end{equation}%
Lemma \ref{NV_new} together with $($\textrm{\~{H}}$_{\alpha ,\beta ,\gamma
}) $ imply 
\begin{equation}
\begin{array}{l}
\left\Vert |\nabla z_{1}|^{\gamma _{i}(.)}\right\Vert _{p_{i}^{\prime }(x)
}+\left\Vert |\nabla z_{2}|^{\overline{\gamma }_{i}(.)}\right\Vert
_{p_{i}^{\prime }(x) }=\left\Vert \nabla z_{1}\right\Vert _{\gamma _{i}(x)
p_{i}^{\prime }(x) }^{\gamma _{i}\left( x_{0}^{i}\right) }+\left\Vert \nabla
z_{2}\right\Vert _{\overline{\gamma }_{i}(x) p_{i}^{\prime }(x) }^{\overline{%
\gamma }_{i}\left( x_{1}^{i}\right) } \\ 
\leq C(\left\Vert \nabla z_{1}\right\Vert _{p_{1}(x)}^{\gamma _{i}\left(
x_{0}^{i}\right) }+\left\Vert \nabla z_{2}\right\Vert _{p_{2}(x)}^{\overline{%
\gamma }_{i}\left( x_{1}^{i}\right) }),%
\end{array}
\label{j,g}
\end{equation}%
for certain $x_{0}^{i},x_{1}^{i}\in \Omega $ and a constant $C>0$. From (\ref%
{99}) we have%
\begin{equation*}
\begin{array}{l}
||z_{1}^{\alpha _{i}(.)}z_{2}^{\beta _{i}(.)}||_{p_{i}^{\prime }(x)}\leq
\left\{ 
\begin{array}{ll}
||(\tilde{c}d(.))^{\alpha _{i}(.)+\beta _{i}(.)}||_{p_{i}^{\prime }(x)} & 
\text{if }\alpha _{i}^{+},\beta _{i}^{+}<0 \\ 
||(\tilde{c}d(.))^{\alpha _{i}(.)}z_{2}^{\beta _{i}(.)}||_{p_{i}^{\prime
}(x)} & \text{if }\alpha _{i}^{+}<0<\beta _{i}^{-} \\ 
||(\tilde{c}d(.))^{\beta _{i}(.)}z_{1}^{\alpha _{i}(.)}||_{p_{i}^{\prime
}(x)} & \text{if }\beta _{i}^{+}<0<\alpha _{i}^{-}.%
\end{array}%
\right.%
\end{array}%
\end{equation*}%
By $($\textrm{\~{H}}$_{\alpha ,\beta ,\gamma })$, H\"{o}lder's inequality
gives%
\begin{equation}
\begin{array}{l}
||z_{1}^{\alpha _{i}(.)}z_{2}^{\beta _{i}(.)}||_{p_{i}^{\prime }(x)}\leq
C_{0}\left\{ 
\begin{array}{ll}
||(\tilde{c}d(.))^{\alpha _{i}(.)+\beta _{i}(.)}||_{p_{i}^{\prime }(x)} & 
\text{if }\alpha _{i}^{+},\beta _{i}^{+}<0 \\ 
\left\Vert (\tilde{c}d(.))^{\alpha _{i}(.)}\right\Vert _{\frac{N^{\prime
}p_{i}^{\prime }(x) }{N^{\prime }-p_{i}^{\prime }(x) \beta _{i}(x) }%
}||z_{2}^{\beta _{i}(.)}||_{\frac{N^{\prime }}{\beta _{i}(x) }} & \text{if }%
\alpha _{i}^{+}<0<\beta _{i}^{-} \\ 
\left\Vert (\tilde{c}d(.))^{\beta _{i}(.)}\right\Vert _{\frac{N^{\prime
}p_{i}^{\prime }(x) }{N^{\prime }-p_{i}^{\prime }(x) \alpha _{i}(x) }%
}||z_{1}^{\alpha _{i}(.)}||_{\frac{N^{\prime }}{\alpha _{i}(x) }} & \text{if 
}\beta _{i}^{+}<0<\alpha _{i}^{-}.%
\end{array}%
\right.%
\end{array}
\label{19}
\end{equation}%
Observe that%
\begin{equation*}
\begin{array}{l}
\int_{\Omega }d(x)^{(\alpha _{i}(x) +\beta _{i}(x) )p_{i}^{\prime }(x) }%
\text{ }dx \\ 
=\int_{\{d\geq 1\}}d(x)^{(\alpha _{i}(x) +\beta _{i}(x) )p_{i}^{\prime }(x) }%
\text{ }dx+\int_{\{d<1\}}d(x)^{(\alpha _{i}(x) +\beta _{i}(x) )p_{i}^{\prime
}(x) }\text{ }dx \\ 
\leq |\Omega |+\int_{\{d<1\}}d(x)^{(\alpha _{i}^{+}+\beta
_{i}^{+})(p_{i}^{\prime })^{+}}\text{ }dx.%
\end{array}%
\end{equation*}%
Then, owing to \cite[Lemma in page 726]{LM}, which is applicable since $%
(\alpha _{i}^{+}+\beta _{i}^{+})(p_{i}^{\prime })^{+}>-1$ (see $($\textrm{\~{%
H}}$_{\alpha ,\beta ,\gamma })$), we infer that%
\begin{equation}
\begin{array}{l}
\int_{\Omega }d(x)^{(\alpha _{i}(x) +\beta _{i}(x) )p_{i}^{\prime }(x) }%
\text{ }dx<+\infty .%
\end{array}
\label{lazer-mackenna}
\end{equation}%
Thence, from (\ref{10}), we derive that%
\begin{equation}
\begin{array}{l}
||(\tilde{c}d(.))^{\alpha _{i}(.)+\beta _{i}(.)}||_{p_{i}^{\prime }(x)
}<+\infty .%
\end{array}
\label{31}
\end{equation}%
On account of $($\textrm{\~{H}}$_{\alpha ,\beta ,\gamma })$ the same
conclusion can be drawn for the cases $\alpha _{i}^{+}<0<\beta _{i}^{-}$ and 
$\beta _{i}^{+}<0<\alpha _{i}^{-}.$ Hence, a similar argument as above
produces%
\begin{equation}
\begin{array}{l}
\left\Vert (\tilde{c}d(.))^{\alpha _{i}(.)}\right\Vert _{\frac{N^{\prime
}p_{i}^{\prime }(x) }{N^{\prime }-p_{i}^{\prime }(x) \beta _{i}(x) }},\text{
\ }\left\Vert (\tilde{c}d(.))^{\beta _{i}(.)}\right\Vert _{\frac{N^{\prime
}p_{i}^{\prime }(x) }{N^{\prime }-p_{i}^{\prime }(x) \alpha _{i}(x) }%
}<+\infty .%
\end{array}
\label{31*}
\end{equation}%
Reporting (\ref{31})-(\ref{31*}) in (\ref{19}), by Lemma \ref{NV_new}, there
exist $x_{2}^{i},x_{3}^{i}\in \Omega $ such that%
\begin{equation*}
\begin{array}{l}
||z_{1}^{\alpha _{i}(.)}z_{2}^{\beta _{i}(.)}||_{p_{i}^{\prime }(x) }\leq
C_{1}(1+||z_{1}^{|\alpha _{i}(.)|}||_{\frac{N^{\prime }}{|\alpha _{i}(x) |}%
}+||z_{2}^{|\beta _{i}(.)|}||_{\frac{N^{\prime }}{|\beta _{i}(x) |}}) \\ 
=C_{1}(1+||z_{1}||_{N^{\prime }}^{|\alpha
_{i}(x_{2}^{i})|}+||z_{2}||_{N^{\prime }}^{|\beta _{i}(x_{3}^{i})|}),%
\end{array}%
\end{equation*}%
where $C_{1}>0$ is a constant. Sobolev embedding $W_{0}^{1,p_{i}(x)}(\Omega
)\hookrightarrow L^{N^{\prime }}(\Omega )$ together with H\"{o}lder's
inequality lead to%
\begin{equation}
\begin{array}{l}
||z_{1}^{\alpha _{i}(.)}z_{2}^{\beta _{i}(.)}||_{p_{i}^{\prime }(x) }\leq 
\tilde{C}_{1}(1+\left\Vert \nabla z_{1}\right\Vert _{p_{1}(x)}^{|\alpha
_{i}(x_{2}^{i})|}+\left\Vert \nabla z_{2}\right\Vert _{p_{2}(x)}^{|\beta
_{i}(x_{3}^{i})|}),%
\end{array}
\label{ab}
\end{equation}%
for some constant $\tilde{C}_{1}>0$. Gathering (\ref{f}), (\ref{j,g}) and (%
\ref{ab}) together it follows that%
\begin{equation}
\begin{array}{l}
\left\Vert f_{i}\left(.,z_{1},z_{2},\nabla z_{1},\nabla z_{2}\right)
\right\Vert _{p_{i}^{\prime }(x) } \\ 
\leq C_{2}(1+\left\Vert \nabla z_{1}\right\Vert _{p_{1}(x)}^{|\alpha
_{i}(x_{2}^{i})|}+\left\Vert \nabla z_{2}\right\Vert _{p_{2}(x)}^{|\beta
_{i}(x_{3}^{i})|}\left\Vert \nabla z_{1}\right\Vert _{p_{1}(x)}^{\gamma
_{i}\left( x_{0}^{i}\right) }+\left\Vert \nabla z_{2}\right\Vert
_{p_{2}(x)}^{\overline{\gamma }_{i}\left( x_{1}^{i}\right) }),%
\end{array}
\label{21}
\end{equation}%
for certain constant $C_{2}>0$. Repeating the argument above by starting in (%
\ref{f}) with $N$ instead of $p_{i}^{\prime }(.)$ and by using $($\textrm{\~{%
H}}$_{\alpha ,\beta ,\gamma })$, we get%
\begin{equation}
\begin{array}{l}
\left\Vert f_{i}\left(.,z_{1},z_{2},\nabla z_{1},\nabla z_{2}\right)
\right\Vert _{N} \\ 
\leq \tilde{C}_{2}(1+\left\Vert \nabla z_{1}\right\Vert _{p_{1}(x)}^{|\alpha
_{i}(\hat{x}_{2}^{i})|}+\left\Vert \nabla z_{2}\right\Vert
_{p_{2}(x)}^{|\beta _{i}(\hat{x}_{3}^{i})|}+\left\Vert \nabla
z_{1}\right\Vert _{p_{1}(x)}^{\gamma _{i}(\hat{x}_{0}^{i})}+\left\Vert
\nabla z_{2}\right\Vert _{p_{2}(x)}^{\overline{\gamma }_{i}(\hat{x}%
_{1}^{i})}) \\ 
\leq \tilde{C}_{2}(1+\left\Vert \nabla z_{1}\right\Vert _{p_{1}(x)}^{\max
\{|\alpha _{i}(\hat{x}_{2}^{i})|,\gamma _{i}(\hat{x}_{0}^{i})\}}+\left\Vert
\nabla z_{2}\right\Vert _{p_{2}(x)}^{\max \{|\beta _{i}(\hat{x}_{3}^{i})|,%
\overline{\gamma }_{i}(\hat{x}_{1}^{i})\}}),%
\end{array}
\label{21*}
\end{equation}%
where $\tilde{C}_{2}>0$ is a constant. Hence, on the basis of (\ref{99}),
the claim follows.

Consequently, the unique solvability of $(\mathrm{P}_{(z_{1},z_{2})})$ comes
directly from Browder-Minty Theorem (see, e.g., \cite{BR}).

The task is now to show that the estimate (\ref{estimation_z}) holds true.
Thanks to Lemma \ref{NV_new}, there exist $x_{4}^{i}\in \Omega $ such that%
\begin{equation}
\begin{array}{l}
\Vert \nabla u_{i}\Vert _{p_{i}(x)}^{p_{i}\left( x_{4}^{i}\right)
}=\int_{\Omega }|\nabla u_{i}|^{p_{i}(x)}dx.%
\end{array}
\label{22}
\end{equation}%
Testing $(\mathrm{P}_{(z_{1},z_{2})})$ with $(u_{1},u_{2})$, H\"{o}lder's
inequality and the embedding $W_{0}^{1,p_{i}(x)}(\Omega )\hookrightarrow
L^{N^{\prime }}(\Omega )$ entail%
\begin{equation}
\begin{array}{l}
\int_{\Omega }|\nabla u_{i}|^{p_{i}(x)}dx\leq \int_{\Omega }f_{i}\left(
x,z_{1},z_{2},\nabla z_{1},\nabla z_{2}\right) u_{1}dx \\ 
\leq C_{0}\left\Vert f_{i}\left(.,z_{1},z_{2},\nabla z_{1},\nabla
z_{2}\right) \right\Vert _{N}\Vert \nabla u_{i}\Vert _{p_{i}(x)},%
\end{array}
\label{23}
\end{equation}%
for a constant $C_{0}>0$. Combining (\ref{22})-(\ref{23}) with (\ref{21*})
and bearing in mind (\ref{99}), one derives that 
\begin{equation*}
\begin{array}{l}
\Vert \nabla u_{i}\Vert _{p_{i}(x)}\leq \lbrack C_{0}\left\Vert f_{i}\left(
.,z_{1},z_{2},\nabla z_{1},\nabla z_{2}\right) \right\Vert _{N}]^{\frac{1}{%
p_{i}(x_{4}^{i})-1}} \\ 
\leq \tilde{C}_{3}(1+\left\Vert \nabla z_{1}\right\Vert _{p_{1}(x)}^{\max
\{|\alpha _{i}(\hat{x}_{2}^{i})|,\gamma _{i}(\hat{x}_{0}^{i})\}}+\left\Vert
\nabla z_{2}\right\Vert _{p_{2}(x)}^{\max \{|\beta _{i}(\hat{x}_{3}^{i})|,%
\overline{\gamma }_{i}(\hat{x}_{1}^{i})\}})^{\frac{1}{p_{i}(x_{4}^{i})-1}}
\\ 
\leq \tilde{C}_{3}(1+\tilde{L}^{\max \{|\alpha _{i}(\hat{x}_{2}^{i})|,\gamma
_{i}(\hat{x}_{0}^{i})\}}+\tilde{L}^{\max \{|\beta _{i}(\hat{x}_{3}^{i})|,%
\overline{\gamma }_{i}(\hat{x}_{1}^{i})\}})^{\frac{1}{p_{i}(x_{4}^{i})-1}%
}\leq \tilde{L},%
\end{array}%
\end{equation*}%
provided that $\tilde{L}>0$ is sufficiently large, where $\tilde{C}_{3}>0$
is a constant independent of $z_{i}$. This is possible because, according to 
$($\textrm{\~{H}}$_{\alpha ,\beta ,\gamma })$, one has%
\begin{equation*}
\begin{array}{l}
\max \{|\alpha _{i}(\hat{x}_{2}^{i})|,|\beta _{i}(\hat{x}_{3}^{i})|,\gamma
_{i}(\hat{x}_{0}^{i}),\overline{\gamma }_{i}(\hat{x}_{1}^{i})%
\}<p_{i}(x_{4}^{i})-1.%
\end{array}%
\end{equation*}%
This completes the proof.
\end{proof}

\begin{lemma}
\label{L3}Under assumptions \textrm{(H}$_{f}$\textrm{)} and $($\textrm{\~{H}}%
$_{\alpha ,\beta ,\gamma })$, for $(z_{1},z_{2})$ satisfying (\ref{99}),
there exists a constant $L>1$ independent of $z_{i}$ such that every
solution $(u_{1},u_{2})$ of $(\mathrm{P}_{(z_{1},z_{2})})$ belongs to $%
L^{\infty }(\Omega )\times L^{\infty }(\Omega )$ and satisfies the estimate%
\begin{equation}
\left\Vert u_{i}\right\Vert _{\infty }<L.  \label{6}
\end{equation}
\end{lemma}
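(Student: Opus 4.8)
The plan is to deduce (\ref{6}) directly from the $L^\infty$-bound of Lemma \ref{L4}, applied to the equation solved by $u_i$, once the right-hand side has been controlled uniformly via the estimates already obtained in the proof of Lemma \ref{weak_bound}.

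First I would fix a pair $(z_1,z_2)$ satisfying (\ref{99}) and recall that the claim (\ref{belonging}) in the proof of Lemma \ref{weak_bound} — which uses only $($\textrm{\~{H}}$_{\alpha,\beta,\gamma})$ and (\ref{99}) — already gives $h_i:=f_i(\cdot,z_1,z_2,\nabla z_1,\nabla z_2)\in L^{p_i^{\prime}(x)}(\Omega)\cap L^{N}(\Omega)$; moreover $h_i\ge m_i z_1^{\alpha_i(x)}z_2^{\beta_i(x)}>0$ in $\Omega$ by $($\textrm{H}$_f)$, so any weak solution $u_i$ of $(\mathrm{P}_{(z_1,z_2)})$ is nonnegative. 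Since $u_i$ solves $-\Delta_{p_i(x)}u_i=h_i$ in $\Omega$ with $u_i=0$ on $\partial\Omega$, Lemma \ref{L4} then applies and yields a constant $C>0$ depending only on $N,p_i,\Omega$ with
\begin{equation*}
\|u_i\|_\infty\le C\,\|h_i\|_{L^N(\Omega)}^{\frac{1}{p_i^{\pm}-1}}\le C\max\bigl\{\|h_i\|_{L^N(\Omega)}^{1/(p_i^{-}-1)},\,\|h_i\|_{L^N(\Omega)}^{1/(p_i^{+}-1)}\bigr\}.
\end{equation*}

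The second step is to bound $\|h_i\|_{L^N(\Omega)}$ independently of $(z_1,z_2)$. For this I would invoke estimate (\ref{21*}), namely
\begin{equation*}
\|h_i\|_{L^N(\Omega)}\le\tilde C_2\Bigl(1+\|\nabla z_1\|_{p_1(x)}^{a_i}+\|\nabla z_2\|_{p_2(x)}^{b_i}\Bigr),
\end{equation*}
with $a_i=\max\{|\alpha_i(\hat x_2^{i})|,\gamma_i(\hat x_0^{i})\}$, $b_i=\max\{|\beta_i(\hat x_3^{i})|,\overline\gamma_i(\hat x_1^{i})\}$ and $\tilde C_2>0$ independent of $z_i$. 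Since $\alpha_i,\beta_i,\gamma_i,\overline\gamma_i\in C(\overline\Omega)$, the exponents $a_i,b_i$ are bounded by $A:=\max_i\max\{\|\alpha_i\|_{C(\overline\Omega)},\|\beta_i\|_{C(\overline\Omega)},\|\gamma_i\|_{C(\overline\Omega)},\|\overline\gamma_i\|_{C(\overline\Omega)}\}$, so $\|\nabla z_j\|_{p_j(x)}\le\tilde L$ from (\ref{99}) gives $\|h_i\|_{L^N(\Omega)}\le\tilde C_2(1+2\max\{1,\tilde L\}^{A})=:K$, a constant that depends only on the data and on $\tilde c,\tilde L$, not on the particular pair $(z_1,z_2)$. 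Feeding this into the previous display, $\|u_i\|_\infty\le C\max\{K^{1/(p_i^{-}-1)},K^{1/(p_i^{+}-1)}\}=:L_i$, and it then suffices to set $L:=1+\max\{L_1,L_2\}>1$ to conclude $0\le u_i\le L_i<L$, i.e. (\ref{6}).

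I do not expect a genuine obstacle here: the heavy lifting is already in Lemmas \ref{L4} and \ref{weak_bound}. The only point to watch is the uniformity claim — the points $\hat x_j^{i}$ produced by the Mean Value device of Lemma \ref{NV_new} depend on $z_i$, but since the exponent functions are continuous on the compact set $\overline\Omega$ their contribution is absorbed into $A$, and every other constant ($\tilde C_2$, $C$, $\tilde L$, $\tilde c$) is fixed by the data alone; this is exactly what makes $L$ independent of $z_i$.
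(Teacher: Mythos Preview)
Your proposal is correct and follows exactly the route the paper takes: the paper's own proof of Lemma~\ref{L3} is the one-line remark that it ``is a direct consequence of Lemma~\ref{L4} where (\ref{21*}) as well as $(\mathrm{\tilde H}_{\alpha,\beta,\gamma})$ and (\ref{99}) are used,'' and you have simply unpacked this, including the observation that the $z_i$-dependent points $\hat x_j^{\,i}$ do not spoil uniformity because the exponent functions are bounded on $\overline\Omega$.
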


\begin{proof}
It is a direct consequence of Lemma \ref{L4} where (\ref{21*}) as well as $($%
\textrm{\~{H}}$_{\alpha ,\beta ,\gamma })$ and (\ref{99}) are used.
\end{proof}

\section{Comparison results}

\label{S4}

Let $\xi _{i},\xi _{i,\delta }\in C^{1,\tau }(\overline{\Omega }),$ $\tau
\in (0,1)$, be the solutions of the Dirichlet problems 
\begin{equation}
-\Delta _{p_{i}(x)}\xi _{i}(x)=1\text{\ in }\Omega ,\text{ \ }\xi _{i}(x)=0%
\text{ \ on }\partial \Omega  \label{9*}
\end{equation}%
and%
\begin{equation}
-\Delta _{p_{i}}\xi _{i,\delta }(x)=\left\{ 
\begin{array}{ll}
1 & \text{in }\Omega \backslash \overline{\Omega }_{\delta } \\ 
-1 & \text{in }\Omega _{\delta }%
\end{array}%
\right. ,\text{ }\xi _{i,\delta }(x)=0\text{ \ on }\partial \Omega ,
\label{9}
\end{equation}%
where 
\begin{equation*}
\Omega _{\delta }:=\{x\in \Omega :d(x)<\delta \},
\end{equation*}%
with a fixed $\delta >0$ sufficiently small.

\begin{lemma}
\label{L5}There are constants $\tau >0$ and $%
c_{1},k_{p_{1}},k_{p_{2}}>1>c_{0}$ such that%
\begin{equation}
c_{0}d(x)\leq \xi _{i,\delta }(x)\leq \xi _{i}(x)\leq c_{1}d(x)\text{ \ for
all }x\in \Omega  \label{5}
\end{equation}%
and%
\begin{equation}
\left\Vert \xi _{i,\delta }\right\Vert _{C^{1,\tau }(\overline{\Omega }%
)},\left\Vert \xi _{i}\right\Vert _{C^{1,\tau }(\overline{\Omega })}\leq
k_{p_{i}}\text{, }i=1,2.  \label{5**}
\end{equation}
\end{lemma}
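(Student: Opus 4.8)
The plan is to establish \eqref{5} and \eqref{5**} separately, handling first the regularity bound and then the two-sided pointwise comparison with the distance function. For \eqref{5**}, I would invoke the regularity theory for the $p_i(x)$-Laplacian as used already in the proof of Lemma \ref{L2}: since the right-hand sides in \eqref{9*} and \eqref{9} are bounded (indeed $L^\infty$ with norm $1$), the results of \cite{F} yield $\xi_i,\xi_{i,\delta}\in C^{1,\tau}(\overline\Omega)$ for some $\tau\in(0,1)$ together with a uniform bound $\|\xi_i\|_{C^{1,\tau}(\overline\Omega)},\|\xi_{i,\delta}\|_{C^{1,\tau}(\overline\Omega)}\le k_{p_i}$; one takes $\tau$ to be the minimum of the four exponents produced and $k_{p_i}$ the maximum of the corresponding constants. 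The constant $k_{p_i}$ depends only on $p_i$, $N$ and $\Omega$, which is exactly the claimed form.

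For the chain \eqref{5}, I would argue in three stages. First, $\xi_{i,\delta}\le\xi_i$ on $\overline\Omega$: subtracting the two equations, $-\Delta_{p_i(x)}\xi_i+\Delta_{p_i(x)}\xi_{i,\delta}\ge 0$ in $\Omega$ (since the right-hand side of \eqref{9*} is $1$ and that of \eqref{9} is at most $1$ everywhere, being $-1$ on $\Omega_\delta$ and $1$ elsewhere), and both functions vanish on $\partial\Omega$, so the weak comparison principle for the $p_i(x)$-Laplacian gives $\xi_{i,\delta}\le\xi_i$. Second, the upper bound $\xi_i(x)\le c_1 d(x)$: this is immediate from $\xi_i\in C^{1,\tau}(\overline\Omega)$ with $\xi_i=0$ on $\partial\Omega$, since a $C^1$ function vanishing on a smooth boundary is Lipschitz and controlled by the distance, with $c_1$ depending on $\|\nabla\xi_i\|_\infty\le k_{p_i}$ and the geometry of $\Omega$ (one may take $c_1>1$ by enlarging if necessary). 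Third, the lower bound $c_0 d(x)\le\xi_{i,\delta}(x)$: here I would use that $\xi_{i,\delta}$ is a positive supersolution near the boundary — on $\Omega\setminus\overline\Omega_\delta$ it solves $-\Delta_{p_i(x)}\xi_{i,\delta}=1>0$, and on $\Omega_\delta$, even though the source is $-1$, the $C^{1,\tau}$ regularity together with the strong maximum principle / Hopf-type boundary estimate for the $p_i(x)$-Laplacian (as in \cite{F}, and used implicitly elsewhere in the paper to derive bounds like \eqref{44}) forces $\partial\xi_{i,\delta}/\partial\nu<0$ on $\partial\Omega$, hence $\xi_{i,\delta}(x)\ge c_0 d(x)$ near $\partial\Omega$; away from the boundary $\xi_{i,\delta}$ is bounded below by a positive constant by compactness and positivity, so after shrinking $c_0<1$ the inequality holds on all of $\Omega$. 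Chaining the three stages yields \eqref{5}.

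The main obstacle I anticipate is the lower bound $c_0 d(x)\le\xi_{i,\delta}(x)$, because $\xi_{i,\delta}$ is \emph{not} a global supersolution of $-\Delta_{p_i(x)}v=0$ — the source term is negative on the strip $\Omega_\delta$ adjacent to the boundary, which is precisely where one needs the Hopf-type lower estimate. The point of choosing $\delta$ small is that the torsion-type function for the \emph{positive} source on $\Omega\setminus\overline\Omega_\delta$ already grows linearly off $\partial(\Omega\setminus\overline\Omega_\delta)$; one then needs to propagate a linear lower bound across the thin strip $\Omega_\delta$ using the uniform $C^{1,\tau}$ bound, so that the boundary gradient cannot vanish. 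Concretely, I would either (a) compare $\xi_{i,\delta}$ on $\Omega_\delta$ with an explicit barrier of the form $c_0 d(x)$ after checking $-\Delta_{p_i(x)}(c_0 d(x))\le -1$ there for $c_0$ small (using smoothness of $d$ near $\partial\Omega$), or (b) invoke directly the boundary Harnack / Hopf lemma for the variable-exponent operator from \cite{F}. Once the boundary estimate is secured, the interior part and the assembly of constants $c_0<1<c_1,k_{p_1},k_{p_2}$ and the common $\tau$ are routine. I would also remark that the labelling is consistent: $c_0$ here is the same constant appearing in \eqref{44}, which is the purpose of stating the lemma in this form.
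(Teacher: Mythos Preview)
Your overall structure matches the paper: the comparison $\xi_{i,\delta}\le\xi_i$ via weak comparison, the bound \eqref{5**} via Lemma~\ref{L2} and the regularity of \cite{F}, and the upper bound $\xi_i\le c_1 d$ from $C^1$ regularity and vanishing on $\partial\Omega$ are exactly what the paper does (it cites \cite[Lemma~3.1]{DM} for the last step).

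The gap is in the lower bound $c_0 d\le\xi_{i,\delta}$, which you rightly flag as the obstacle, but neither of your proposed fixes goes through. Option~(b) fails because the Hopf lemma for $-\Delta_{p_i(x)}$ requires a \emph{supersolution} at the boundary point, whereas $-\Delta_{p_i(x)}\xi_{i,\delta}=-1<0$ on $\Omega_\delta$ makes $\xi_{i,\delta}$ a strict subsolution there; nothing in \cite{F} bypasses this. Option~(a) fails because near $\partial\Omega$, where $|\nabla d|=1$, one computes $-\Delta_{p_i(x)}(c_0 d)=-c_0^{p_i(x)-1}\bigl[\Delta d+(\ln c_0)\,\nabla p_i\cdot\nabla d\bigr]$, whose sign is uncontrolled as $c_0\to 0$; even for constant $p_i$ you would need $c_0^{p_i-1}\Delta d\ge 1$, impossible for small $c_0$. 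The paper instead appeals to the Strong Maximum Principle combined with \cite[Lemma~3]{AM}. The mechanism behind that citation is a perturbation argument: the $C^{1,\tau}$ bound \eqref{5**} is \emph{uniform in $\delta$} (the right-hand sides in \eqref{9} all have $L^\infty$ norm $1$), so Arzel\`a--Ascoli together with uniqueness of solutions gives $\xi_{i,\delta}\to\xi_i$ in $C^1(\overline\Omega)$ as $\delta\to 0$. Since $\xi_i$ \emph{is} a supersolution with strictly positive source, Hopf applies to it and yields $\partial_\nu\xi_i<0$ on $\partial\Omega$; by $C^1$ convergence the same holds for $\xi_{i,\delta}$ once $\delta$ is small, and the linear lower bound follows. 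Your intuition that small $\delta$ prevents the boundary gradient from vanishing is correct, but it must be realised through this convergence to $\xi_i$, not through a direct barrier on the strip.
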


\begin{proof}
From (\ref{9*}) and (\ref{9}), it is readily seen that $\xi _{i,\delta
}(x)\leq \xi _{i}(x)$ for all $x\in \Omega ,$ for $i=1,2.$ The Strong
Maximum Principle together with \cite[Lemma 3]{AM} entail $\xi _{i,\delta
}(x)\geq c_{0}d(x)$ in $\Omega ,$ for $\delta >0$ sufficiently small in (\ref%
{9}) while, invoking Lemma \ref{L2}, we infer that (\ref{5**}) holds true.
Moreover, using (\ref{5**}), a similar argument to that in the proof of \cite%
[Lemma 3.1]{DM} shows that the last inequality in (\ref{5}) is verified.
This ends the proof.
\end{proof}

For a constant $C>1$ set%
\begin{equation}
\underline{u}_{i}=C^{-1}\xi _{i,\delta }\quad \text{and}\quad \overline{u}%
_{i}=C\xi _{i}.  \label{sub-super}
\end{equation}

We claim that $\overline{u}_{i}\geq \underline{u}_{i}$ in $\overline{\Omega }
$. Indeed, observe, from (\ref{9*}) and (\ref{9}), that the integrals
\begin{equation}
\int_{\Omega \backslash \overline{\Omega }_{\delta }}|\nabla \xi _{i,\delta
}|^{p_{i}(x)-2}\nabla \xi _{i,\delta }\nabla \varphi _{i}\text{ }dx,
\label{34**}
\end{equation}%
\begin{equation}
-\int_{\Omega _{\delta }}|\nabla \xi _{i,\delta }|^{p_{i}(x)-2}\nabla \xi
_{i,\delta }\nabla \varphi _{i}\text{ }dx,  \label{34*}
\end{equation}%
\begin{equation}
\int_{\Omega }|\nabla \xi _{i}|^{p_{i}(x)-2}\nabla \xi _{i}\nabla \varphi
_{i}\text{ }dx,  \label{34}
\end{equation}%
are positive for all $\varphi _{i}\in W_{0}^{1,p_{i}(x)}(\Omega )$ with $%
\varphi _{i}\geq 0$. This is crucial so that Theorem \ref{MVT2} in the
Appendix is applicable. By (\ref{sub-super}) and thanks to Theorem \ref{MVT2}%
, there exist $x_{i}^{1},x_{i}^{2}\in \Omega $ such that%
\begin{equation}
\begin{array}{l}
\int_{\Omega }|\nabla \underline{u}_{i}|^{p_{i}(x)-2}\nabla \underline{u}%
_{i}\nabla \varphi _{i}\text{ }dx \\ 
=\int_{\Omega }C^{-(p_{i}(x)-1)}|\nabla \xi _{i,\delta }|^{p_{i}(x)-2}\nabla
\xi _{i,\delta }\nabla \varphi _{i}\text{ }dx \\ 
=\int_{\Omega \backslash \overline{\Omega }_{\delta
}}C^{-(p_{i}(x)-1)}|\nabla \xi _{i,\delta }|^{p_{i}(x)-2}\nabla \xi
_{i,\delta }\nabla \varphi _{i}\text{ }dx \\ 
\text{ \ \ \ \ \ }-\int_{\Omega _{\delta }}(-C^{-(p_{i}(x)-1)}|\nabla \xi
_{i,\delta }|^{p_{i}(x)-2}\nabla \xi _{i,\delta }\nabla \varphi _{i})\text{ }%
dx \\ 
=C^{-(p_{i}(x_{i}^{1})-1)}\int_{\Omega \backslash \overline{\Omega }_{\delta
}}|\nabla \xi _{i,\delta }|^{p_{i}(x)-2}\nabla \xi _{i,\delta }\nabla
\varphi _{i}\text{ }dx \\ 
\text{ \ \ \ \ \ }-C^{-(p_{i}(x_{i}^{2})-1)}\int_{\Omega _{\delta
}}(-|\nabla \xi _{i,\delta }|^{p_{i}(x)-2}\nabla \xi _{i,\delta }\nabla
\varphi _{i})\text{ }dx.%
\end{array}
\label{11}
\end{equation}%
Using (\ref{9}) we obtain 
\begin{equation}
\begin{array}{l}
\int_{\Omega }|\nabla \underline{u}_{i}|^{p_{i}(x)-2}\nabla \underline{u}%
_{i}\nabla \varphi _{i}\text{ }dx \\ 
=C^{-(p_{i}(x_{i}^{1})-1)}\int_{\Omega \backslash \overline{\Omega }_{\delta
}}\varphi _{i}\text{ }dx-C^{-(p_{i}(x_{i}^{2})-1)}\int_{\Omega _{\delta
}}\varphi _{i}\text{ }dx \\ 
\leq C^{-(p_{i}^{-}-1)}\int_{\Omega \backslash \overline{\Omega }_{\delta
}}\varphi _{i}\text{ }dx-C^{-(p_{i}^{+}-1)}\int_{\Omega _{\delta }}\varphi
_{i}\text{ }dx.%
\end{array}
\label{12}
\end{equation}%
Again, Theorem \ref{MVT2} and (\ref{9*}) imply%
\begin{equation}
\begin{array}{l}
\int_{\Omega }|\nabla \overline{u}_{i}|^{p_{i}(x)-2}\nabla \overline{u}%
_{i}\nabla \varphi _{i}\text{ }dx=\int_{\Omega }C^{p_{i}(x)-1}|\nabla \xi
_{i}|^{p_{i}(x)-2}\nabla \xi _{i}\nabla \varphi _{i}\text{ }dx \\ 
=C^{p_{i}(x_{i}^{0})-1}\int_{\Omega }|\nabla \xi _{i}|^{p_{i}(x)-2}\nabla
\xi _{i}\nabla \varphi _{i}\text{ }dx \\ 
\geq C^{p_{i}^{-}-1}\int_{\Omega }|\nabla \xi _{i}|^{p_{i}(x)-2}\nabla \xi
_{i}\nabla \varphi _{i}\text{ }dx=C^{p_{i}^{-}-1}\int_{\Omega }\varphi _{i}%
\text{ }dx,%
\end{array}
\label{13}
\end{equation}%
for certain $x_{i}^{0}\in \Omega $. Then, combining (\ref{12})-(\ref{13})
together implies%
\begin{equation*}
\begin{array}{l}
\int_{\Omega }|\nabla \underline{u}_{i}|^{p_{i}(x)-2}\nabla \underline{u}%
_{i}\nabla \varphi _{i}\text{ }dx\leq \int_{\Omega }|\nabla \overline{u}%
_{i}|^{p_{i}(x)-2}\nabla \overline{u}_{i}\nabla \varphi _{i}\text{ }dx,%
\end{array}%
\end{equation*}%
for all $\varphi _{i}\in W_{0}^{1,p_{i}(x)}(\Omega )$ with $\varphi _{i}\geq
0$, provided that $C>0$ is large enough. This proves the claim.

\mathstrut

Set%
\begin{equation}
\begin{array}{l}
R:=\underset{i=1,2}{\max }\{1,k_{p_{i}}\},%
\end{array}
\label{8}
\end{equation}%
where $k_{p_{1}}$ and $k_{p_{2}}$ are given by (\ref{5**}). The following
results allow us to achieve useful comparison properties.

\begin{proposition}
\label{P2} Assume $($\textrm{H}$_{\alpha ,\beta ,\gamma })$ is fulfilled
with $\alpha _{i}^{\mp }+\beta _{i}^{\mp }>0$ ($i=1,2$). Then, for $C>0$
large enough in (\ref{sub-super}), it holds 
\begin{equation}
-\Delta _{p_{i}(x)}\underline{u}_{i}\leq m_{i}\left\{ 
\begin{array}{ll}
\underline{u}_{1}^{\alpha _{i}(x)}\underline{u}_{2}^{\beta _{i}(x)} & \text{%
if }\alpha _{i}^{-},\beta _{i}^{-}>0 \\ 
\overline{u}_{1}^{\alpha _{i}(x)}\underline{u}_{2}^{\beta _{i}(x)} & \text{%
if }\alpha _{i}^{+}<0<\beta _{i}^{-} \\ 
\underline{u}_{1}^{\alpha _{i}(x)}\overline{u}_{2}^{\beta _{i}(x)} & \text{%
if }\beta _{i}^{+}<0<\alpha _{i}^{-}%
\end{array}%
\right. \text{ in }\Omega ,  \label{32}
\end{equation}%
\begin{equation}
-\Delta _{p_{i}(x)}\overline{u}_{i}\geq 2M_{i}(RC)^{\max \{\gamma _{i}^{+},%
\bar{\gamma}_{i}^{+}\}}+M_{i}\left\{ 
\begin{array}{ll}
\overline{u}_{1}^{\alpha _{i}(x)}\overline{u}_{2}^{\beta _{i}(x)} & \text{if 
}\alpha _{i}^{-},\beta _{i}^{-}>0 \\ 
\underline{u}_{1}^{\alpha _{i}(x)}\overline{u}_{2}^{\beta _{i}(x)} & \text{%
if }\alpha _{i}^{+}<0<\beta _{i}^{-} \\ 
\overline{u}_{1}^{\alpha _{i}(x)}\underline{u}_{2}^{\beta _{i}(x)} & \text{%
if }\beta _{i}^{+}<0<\alpha _{i}^{-}%
\end{array}%
\right. \text{ in }\Omega ,  \label{33}
\end{equation}%
where $R>0$ is provided in (\ref{8}), for $i=1,2.$
\end{proposition}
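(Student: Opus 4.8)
The strategy is to estimate the $p_i(x)$-Laplacian of $\underline{u}_i = C^{-1}\xi_{i,\delta}$ from above and of $\overline{u}_i = C\xi_i$ from below, using the Mean Value Theorem (Theorem \ref{MVT2}) to pull the constant $C$ through the $p_i(x)$-dependent exponent, and then to compare these with the right-hand sides prescribed in \eqref{32}--\eqref{33}. The key point is that by \eqref{12} and \eqref{13} of the preceding discussion we already have, in the weak sense,
\begin{equation*}
-\Delta_{p_i(x)}\underline{u}_i \leq C^{-(p_i^--1)}\chi_{\Omega\setminus\overline{\Omega}_\delta} - C^{-(p_i^+-1)}\chi_{\Omega_\delta}
\quad\text{and}\quad
-\Delta_{p_i(x)}\overline{u}_i \geq C^{p_i^--1}.
\end{equation*}

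\textbf{Step 1: lower bound for $\overline{u}_i$.} Using \eqref{5}, \eqref{5**} and \eqref{8}, we have $\xi_j(x)\leq c_1 d(x)$ and $\xi_j(x)\leq R$, so $\overline{u}_j = C\xi_j \leq RC$ pointwise, and also $\overline{u}_j \leq c_1 C d(x)$ with $d(x)$ bounded on $\Omega$. Hence in each of the three cases the singular product term $\overline{u}_1^{\alpha_i(x)}\overline{u}_2^{\beta_i(x)}$ (or its mixed variants) is bounded above by a constant times a fixed power of $C$. Because $\alpha_i^\mp + \beta_i^\mp > 0$, the dominant exponent of $C$ appearing on the right of \eqref{33} is strictly less than $p_i^- - 1$ (indeed $\max\{\alpha_i^\mp+\beta_i^\mp\} < p_i^- - 1$ from $(\mathrm{H}_{\alpha,\beta,\gamma})$, and $\max\{\gamma_i^+,\bar\gamma_i^+\} < p_i^- - 1$ as well); so for $C$ large the single term $C^{p_i^--1}$ dominates the whole right-hand side $2M_i(RC)^{\max\{\gamma_i^+,\bar\gamma_i^+\}} + M_i(\cdots)$. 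This is where care with the case distinction is needed: when $\alpha_i^+ < 0 < \beta_i^-$ one must bound $\underline{u}_1^{\alpha_i(x)}$ using $\underline{u}_1 \geq C^{-1}c_0 d(x)$, which contributes a negative power of $d(x)$; but $d(x)$ is bounded above, so $\underline{u}_1^{\alpha_i(x)} \leq (C^{-1}c_0 d(x))^{\alpha_i(x)}$ is controlled by a power of $C$ (positive, since $\alpha_i < 0$) times $\sup_\Omega d(x)^{\alpha_i(x)}$, and the latter is finite because $d$ is bounded away from $0$ only in the interior — here one uses that $\alpha_i^+ < 0$ forces the relevant exponent to be $\alpha_i^-$ in the sense of the $\mp$-convention, keeping the power of $d$ bounded. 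Assembling, one gets $-\Delta_{p_i(x)}\overline{u}_i \geq C^{p_i^--1} \geq$ RHS of \eqref{33} once $C$ exceeds a threshold depending only on $M_i$, $R$, $c_0$, $c_1$, $\Omega$ and the exponents.

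\textbf{Step 2: upper bound for $\underline{u}_i$.} On $\Omega\setminus\overline{\Omega}_\delta$ we have $-\Delta_{p_i(x)}\underline{u}_i \leq C^{-(p_i^--1)}$, while the right-hand side of \eqref{32}, e.g. $m_i \underline{u}_1^{\alpha_i(x)}\underline{u}_2^{\beta_i(x)}$, is bounded below using $\underline{u}_j \geq C^{-1}c_0 d(x) \geq C^{-1}c_0\delta$ on that region (since $d(x)\geq\delta$ there); thus the RHS is at least $m_i (C^{-1}c_0\delta)^{\alpha_i(x)+\beta_i(x)}$, which for $\alpha_i^\mp+\beta_i^\mp > 0$ behaves like $C^{-(\alpha_i^\mp+\beta_i^\mp)}$ up to a constant — and since $\alpha_i^\mp+\beta_i^\mp < p_i^- - 1$, we have $C^{-(\alpha_i^\mp+\beta_i^\mp)} \geq C^{-(p_i^--1)}$ for $C$ large, giving the inequality there. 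On $\Omega_\delta$ the left-hand side is $-C^{-(p_i^+-1)} < 0 \leq$ RHS (the RHS being a positive product of positive quantities), so the inequality holds trivially. The mixed cases $\alpha_i^+<0<\beta_i^-$ and $\beta_i^+<0<\alpha_i^-$ are handled by replacing the offending factor $\underline{u}_j$ (with negative exponent) by $\overline{u}_j = C\xi_j \leq c_1 C d(x)$, which gives a bound from below of the form $m_i(c_1 C)^{\alpha_i(x)}(C^{-1}c_0 d(x))^{\beta_i(x)}$ etc.; again the net power of $C$ is $-(\alpha_i^\mp+\beta_i^\mp) < p_i^- - 1$, and the powers of $d(x)$ remain integrable/bounded on the relevant subregion because $\alpha_i^\mp+\beta_i^\mp > 0$ by hypothesis. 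Combining with the weak formulation as in \eqref{11}--\eqref{13} (testing against $\varphi_i \geq 0$) completes the comparison.

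\textbf{Main obstacle.} The delicate part is bookkeeping the powers of $C$ and of $d(x)$ simultaneously across the three structural cases while keeping all thresholds on $C$ independent of the data $z_i$ — in particular verifying that in every mixed case the exponent of $C$ on the right-hand side is strictly dominated by $p_i^- - 1$ (for the supersolution) or strictly exceeds $-(p_i^--1)$ (for the subsolution), which is exactly what $(\mathrm{H}_{\alpha,\beta,\gamma})$ with $\alpha_i^\mp+\beta_i^\mp > 0$ guarantees. Everything else is a routine application of the Mean Value Theorem and of the sandwich estimate \eqref{5}.
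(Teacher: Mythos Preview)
Your overall strategy matches the paper's: bound $-\Delta_{p_i(x)}\underline{u}_i$ above and $-\Delta_{p_i(x)}\overline{u}_i$ below via \eqref{12}--\eqref{13}, then compare powers of $C$ using $(\mathrm{H}_{\alpha,\beta,\gamma})$. Step~2 is essentially the paper's argument and is correct, including the observation that on $\Omega_\delta$ the left-hand side is negative so the subsolution inequality holds trivially there.

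There is, however, a genuine gap in Step~1 for the mixed case $\alpha_i^+<0<\beta_i^-$ of \eqref{33}. You write that $\underline{u}_1^{\alpha_i(x)}\leq (C^{-1}c_0 d(x))^{\alpha_i(x)}$ is controlled by a power of $C$ times $\sup_\Omega d(x)^{\alpha_i(x)}$, and that this supremum is finite. It is not: since $\alpha_i(x)<0$ and $d(x)\to 0$ near $\partial\Omega$, one has $\sup_\Omega d(x)^{\alpha_i(x)}=+\infty$. The remark about the $\mp$-convention does not rescue this (and in fact $\alpha_i^\mp=\alpha_i^+$, not $\alpha_i^-$, when $\alpha_i<0$). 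The point you are missing, and which the paper exploits, is that one must \emph{never} bound the singular factor $\underline{u}_1^{\alpha_i(x)}$ in isolation. The paper splits $\Omega$ into $\Omega\setminus\overline{\Omega}_\delta$ (where $d(x)\geq\delta$, so $d(x)^{\alpha_i(x)}$ is harmless) and $\Omega_\delta$; on $\Omega_\delta$ it pairs $\underline{u}_1^{\alpha_i(x)}$ with $\overline{u}_2^{\beta_i(x)}\leq (c_1 C d(x))^{\beta_i(x)}$, so that the product is estimated by a constant times $C^{-\alpha_i(x)+\beta_i(x)}\,d(x)^{\alpha_i(x)+\beta_i(x)}$. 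It is precisely the combined exponent $\alpha_i(x)+\beta_i(x)$ that the hypothesis $\alpha_i^\mp+\beta_i^\mp>0$ governs, and this is what keeps the $d$-power bounded near $\partial\Omega$. Without this cancellation between the two factors your supersolution estimate in the mixed case does not close.
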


\begin{proof}
Assume $\alpha _{i}^{-},\beta _{i}^{-}>0$. From (\ref{sub-super}) and Lemma %
\ref{L5}, we have%
\begin{equation}
\begin{array}{l}
m_{i}\int_{\Omega }\underline{u}_{1}^{\alpha _{i}(x)}\underline{u}%
_{2}^{\beta _{i}(x)}\varphi _{i}\text{ }\mathrm{d}x=m_{i}\int_{\Omega
}C^{-(\alpha _{i}(x)+\beta _{i}(x))}\xi _{1,\delta }^{\alpha _{i}(x)}\xi
_{2,\delta }^{\beta _{i}(x)}\varphi _{i}\text{ }\mathrm{d}x \\ 
\geq m_{i}\int_{\Omega }(Cc_{0}^{-1})^{-(\alpha _{i}(x)+\beta
_{i}(x))}d(x)^{\alpha _{i}(x)+\beta _{i}(x)}\varphi _{i}\text{ }\mathrm{d}x
\\ 
\geq m_{i}(Cc_{0}^{-1})^{-(\alpha _{i}^{+}+\beta _{i}^{+})}\left( \delta
^{\alpha _{i}^{+}+\beta _{i}^{+}}\int_{\Omega \backslash \overline{\Omega }%
_{\delta }}\varphi _{i}\text{ }\mathrm{d}x+\int_{\Omega _{\delta
}}d(x)^{\alpha _{i}(x)+\beta _{i}(x)}\varphi _{i}\text{ }\mathrm{d}x\right)
\\ 
\geq C^{-(p_{i}^{-}-1)}\int_{\Omega \backslash \overline{\Omega }_{\delta
}}\varphi _{i}\text{ }\mathrm{d}x-C^{-(p_{i}^{+}-1)}\int_{\Omega _{\delta
}}\varphi _{i}\text{ }\mathrm{d}x,%
\end{array}
\label{16}
\end{equation}%
for all $\varphi _{i}\in W_{0}^{1,p_{i}(x)}(\Omega )$ with $\varphi _{i}\geq
0,$ $i=1,2,$ and for $C>0$ large enough. Thus, combining (\ref{12}) together
with (\ref{16}), we infer that%
\begin{equation*}
\int_{\Omega }|\nabla \underline{u}_{i}|^{p_{i}(x)-2}\nabla \underline{u}%
_{i}\nabla \varphi _{i}\text{ }\mathrm{d}x\leq m_{i}\int_{\Omega }\underline{%
u}_{1}^{\alpha _{i}(x)}\underline{u}_{2}^{\beta _{i}(x)}\varphi _{i}\text{ }%
\mathrm{d}x,
\end{equation*}%
for all $\varphi _{i}\in W_{0}^{1,p_{i}(x)}(\Omega )$ with $\varphi _{i}\geq
0,$ for $i=1,2$. This proves the first case in (\ref{32}).

Next, we show (\ref{33}) for $\alpha _{i}^{-},\beta _{i}^{-}>0$. Using (\ref%
{sub-super}), (\ref{8}), (\ref{9*}) and (\ref{13}), it follows that%
\begin{equation*}
\begin{array}{l}
M_{i}\int_{\Omega }(\overline{u}_{1}^{\alpha _{i}(x)}\overline{u}_{2}^{\beta
_{i}(x)}+2(RC)^{\max \{\gamma _{i}^{+},\bar{\gamma}_{i}^{+}\}})\varphi _{i}%
\text{ }\mathrm{d}x \\ 
=M_{i}\int_{\Omega }(C^{\alpha _{i}(x)+\beta _{i}(x)}\xi _{1}^{\alpha
_{i}(x)}\xi _{2}^{\beta _{i}(x)}+2(RC)^{\max \{\gamma _{i}^{+},\bar{\gamma}%
_{i}^{+}\}})\varphi _{i}\text{ }\mathrm{d}x \\ 
\leq M_{i}\int_{\Omega }(C^{\alpha _{i}^{+}+\beta _{i}^{+}}R^{\alpha
_{i}^{+}+\beta _{i}^{+}}+2(RC)^{\max \{\gamma _{i}^{+},\bar{\gamma}%
_{i}^{+}\}})\varphi _{i}\text{ }\mathrm{d}x \\ 
\leq \tilde{M}_{R}\max \{C^{\alpha _{i}^{+}+\beta _{i}^{+}},C^{\max \{\gamma
_{i}^{+},\bar{\gamma}_{i}^{+}\}}\}\int_{\Omega }\varphi _{i}\text{ }\mathrm{d%
}x \\ 
\leq C^{p_{i}^{-}-1}\int_{\Omega }\varphi _{i}\text{ }\mathrm{d}x\leq
\int_{\Omega }|\nabla \overline{u}_{i}|^{p_{i}(x)-2}\nabla \overline{u}%
_{i}\nabla \varphi _{i}\text{ }\mathrm{d}x,%
\end{array}%
\end{equation*}%
for $\varphi _{i}\in W_{0}^{1,p_{i}(x)}(\Omega )$ with $\varphi _{i}\geq 0,$ 
$i=1,2$, and for $C>0$ large enough.

Now, we deal with the other cases in (\ref{32}) and (\ref{33}) with respect
to the sign of the exponents. We only prove the inequalities corresponding
to the case $\alpha _{i}^{+}<0<\beta _{i}^{-}$ because the complementary
situation $\beta _{i}^{+}<0<\alpha _{i}^{-}$ is carried out in a similar
way. So assume $\alpha _{i}^{+}<0<\beta _{i}^{-}$. On account of Lemma \ref%
{L5} and $($\textrm{H}$_{\alpha ,\beta ,\gamma })$ one has%
\begin{equation*}
\begin{array}{l}
\int_{\Omega }C^{\alpha _{i}(x)-\beta _{i}(x)}\xi _{1}^{\alpha _{i}(x)}\xi
_{2,\delta }^{\beta _{i}(x)}\varphi _{i}\text{ }\mathrm{d}x\geq C^{\alpha
_{i}^{-}-\beta _{i}^{+}}R^{\alpha _{i}^{-}}(c_{0}\delta )^{\beta
_{i}^{+}}\int_{\Omega \backslash \overline{\Omega }_{\delta }}\varphi _{i}%
\text{ }\mathrm{d}x \\ 
\text{ \ \ \ \ \ \ \ \ \ }+C^{\alpha _{i}^{-}-\beta _{i}^{+}}c_{0}^{\beta
_{i}^{+}}c_{1}^{\alpha _{i}^{-}}\int_{\Omega _{\delta }}d(x)^{\alpha
_{i}(x)+\beta _{i}(x)}\varphi _{i}\text{ }\mathrm{d}x \\ 
\geq C^{-(p_{i}^{-}-1)}\int_{\Omega \backslash \overline{\Omega }_{\delta
}}\varphi _{i}\text{ }\mathrm{d}x-C^{-(p_{i}^{+}-1)}\int_{\Omega _{\delta
}}\varphi _{i}\text{ }\mathrm{d}x,%
\end{array}%
\end{equation*}%
for all $\varphi _{i}\in W_{0}^{1,p_{i}(x)}(\Omega )$ with $\varphi _{i}\geq
0$, provided that $C>0$ is large enough. Then, on the basis of (\ref%
{sub-super}), (\ref{9*}), (\ref{9}) and (\ref{12}), one gets%
\begin{equation*}
\begin{array}{l}
m_{i}\int_{\Omega }\overline{u}_{1}^{\alpha _{i}(x)}\underline{u}_{2}^{\beta
_{i}(x)}\varphi _{i}\text{ }\mathrm{d}x\geq \int_{\Omega }|\nabla \underline{%
u}_{i}|^{p_{i}(x)-2}\nabla \underline{u}_{i}\nabla \varphi _{i}\text{ }%
\mathrm{d}x.%
\end{array}%
\end{equation*}%
Next, we show (\ref{33}) when $\alpha _{i}^{+}<0<\beta _{i}^{-}$. By (\ref{8}%
), \textrm{(H}$_{\alpha ,\beta }$\textrm{)}, (\ref{6}), (\ref{5}) and Lemma %
\ref{L5}, it follows that%
\begin{equation}
\begin{array}{l}
M_{i}\int_{\Omega }(\underline{u}_{1}^{\alpha _{i}(x)}\overline{u}%
_{2}^{\beta _{i}(x)}+2(RC)^{\max \{\gamma _{i}^{+},\bar{\gamma}%
_{i}^{+}\}})\varphi _{i}\text{ }\mathrm{d}x \\ 
=M_{i}\int_{\Omega }(C^{-\alpha _{i}(x)+\beta _{i}(x)}\xi _{1,\delta
}^{\alpha _{i}(x)}\xi _{2}^{\beta _{i}(x)}+2(RC)^{\max \{\gamma _{i}^{+},%
\bar{\gamma}_{i}^{+}\}})\varphi _{i}\text{ }\mathrm{d}x \\ 
\leq M_{i}C^{-\alpha _{i}^{-}+\beta _{i}^{+}}\left[ (c_{0}\delta )^{\alpha
_{i}^{-}}R^{\beta _{i}^{+}}\int_{\Omega \backslash \overline{\Omega }%
_{\delta }}\varphi _{i}\text{ }\mathrm{d}x+c_{1}^{\beta
_{i}^{+}}c_{0}^{\alpha _{i}^{-}}\int_{\Omega _{\delta }}d(x)^{\alpha
_{i}^{-}+\beta _{i}^{+}}\varphi _{i}\text{ }\mathrm{d}x\right] \\ 
\text{ \ \ \ \ \ }+2(RC)^{\max \{\gamma _{i}^{+},\bar{\gamma}%
_{i}^{+}\}})\int_{\Omega }\varphi _{i}\text{ }\mathrm{d}x\leq
C^{p_{i}^{-}-1}\int_{\Omega }\varphi _{i}\text{ }\mathrm{d}x,%
\end{array}
\label{36}
\end{equation}%
for $\varphi _{i}\in W_{0}^{1,p_{i}(x)}(\Omega )$ with $\varphi _{i}\geq 0$,
provided that $C>0$ is large enough. Thus, gathering (\ref{13})-(\ref{36})
together yields%
\begin{equation*}
\begin{array}{l}
\int_{\Omega }|\nabla \overline{u}_{i}|^{p_{i}(x)-2}\nabla \overline{u}%
_{i}\nabla \varphi _{i}\text{ }\mathrm{d}x\geq M_{1}\int_{\Omega }(%
\underline{u}_{1}^{\alpha _{i}(x)}\overline{u}_{2}^{\beta
_{i}(x)}+2(RC)^{\max \{\gamma _{i}^{+},\bar{\gamma}_{i}^{+}\}})\varphi _{i}%
\text{ }\mathrm{d}x.%
\end{array}%
\end{equation*}
\end{proof}

\begin{proposition}
\label{P6} Assume $($\textrm{\~{H}}$_{\alpha ,\beta ,\gamma })$ is
fulfilled. Then, for $C>0$ large enough in (\ref{8}), it holds 
\begin{equation}
-\Delta _{p_{i}(x)}\underline{u}_{i}\leq m_{i}\left\{ 
\begin{array}{ll}
L^{\alpha _{i}^{-}}\underline{u}_{2}^{\beta _{i}(x)} & \text{if }\alpha
_{i}^{+}<0<\beta _{i}^{-} \\ 
L^{\beta _{i}^{-}}\underline{u}_{1}^{\alpha _{i}(x)} & \text{if }\beta
_{i}^{+}<0<\alpha _{i}^{-} \\ 
L^{\alpha _{i}^{-}+\beta _{i}^{-}} & \text{if }\alpha _{i}^{+},\beta
_{i}^{+}<0%
\end{array}%
\right. \text{ in }\Omega ,\text{ for }i=1,2,
\end{equation}%
where the constant $L>1$ is provided by Lemma \ref{L3}.
\end{proposition}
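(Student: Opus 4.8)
The plan is to follow, almost verbatim, the argument proving the subsolution inequality (\ref{32}) in Proposition \ref{P2}, with estimate (\ref{12}) supplying the distributional upper bound for $-\Delta_{p_i(x)}\underline{u}_i$. Fix $i\in\{1,2\}$ and take an arbitrary $\varphi_i\in W_0^{1,p_i(x)}(\Omega)$ with $\varphi_i\ge 0$. Since $\underline{u}_i=C^{-1}\xi_{i,\delta}$, estimate (\ref{12}) already gives $\int_\Omega|\nabla\underline{u}_i|^{p_i(x)-2}\nabla\underline{u}_i\nabla\varphi_i\,dx\le C^{-(p_i^--1)}\int_{\Omega\setminus\overline{\Omega}_\delta}\varphi_i\,dx-C^{-(p_i^+-1)}\int_{\Omega_\delta}\varphi_i\,dx$, so, denoting by $R_i(x)$ the corresponding right-hand side of the statement in each sign configuration, it suffices to verify the pointwise inequalities $m_iR_i(x)\ge C^{-(p_i^--1)}$ for a.e. $x\in\Omega\setminus\overline{\Omega}_\delta$ (where $d(x)\ge\delta$) and $m_iR_i(x)\ge -C^{-(p_i^+-1)}$ for a.e. $x\in\Omega_\delta$; integrating such an inequality against $\varphi_i\ge0$ and comparing with (\ref{12}) then yields the asserted differential inequality. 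The second requirement is automatic, since every $R_i$ is nonnegative, so only the bound on $\Omega\setminus\overline{\Omega}_\delta$ needs attention.

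I would treat the doubly singular case $\alpha_i^+,\beta_i^+<0$ first: here $R_i=L^{\alpha_i^-+\beta_i^-}$ is a fixed constant in $(0,1)$, and $m_iL^{\alpha_i^-+\beta_i^-}\ge C^{-(p_i^--1)}$ holds for every large $C$ because $p_i^->1$ makes $C^{-(p_i^--1)}\to0$; no hypothesis on the exponents is used. For the mixed case $\alpha_i^+<0<\beta_i^-$ (the case $\beta_i^+<0<\alpha_i^-$ being identical after exchanging the roles of the two components and of $\xi_{1,\delta},\xi_{2,\delta}$), write $R_i=L^{\alpha_i^-}\underline{u}_2^{\beta_i(x)}=L^{\alpha_i^-}C^{-\beta_i(x)}\xi_{2,\delta}^{\beta_i(x)}$ and invoke Lemma \ref{L5}: on $\Omega\setminus\overline{\Omega}_\delta$ one has $\xi_{2,\delta}(x)\ge c_0 d(x)\ge c_0\delta$, so, taking $C$ large enough that $C^{-1}c_0\delta<1$ and recalling $\beta_i(\cdot)>0$, one obtains $\underline{u}_2^{\beta_i(x)}\ge(C^{-1}c_0\delta)^{\beta_i^+}=(c_0\delta)^{\beta_i^+}C^{-\beta_i^+}$, hence $m_iR_i(x)\ge m_iL^{\alpha_i^-}(c_0\delta)^{\beta_i^+}C^{-\beta_i^+}$. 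Since $(\mathrm{\tilde{H}}_{\alpha,\beta,\gamma})$ keeps this exponent of $C$ strictly below $p_i^--1$, the last quantity is $\ge C^{-(p_i^--1)}$ once $C$ is large, which is exactly what is needed.

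The single delicate point is the bookkeeping of the powers of $C$: in each configuration one must ensure that the exponent of $C$ produced when bounding $R_i$ from below on $\Omega\setminus\overline{\Omega}_\delta$ is strictly smaller than $p_i^--1$, so that enlarging $C$ lets that term absorb the $C^{-(p_i^--1)}$ coming from the torsion-type estimate (\ref{12}); this is precisely the role of $(\mathrm{\tilde{H}}_{\alpha,\beta,\gamma})$, and in the purely constant case $\alpha_i^+,\beta_i^+<0$ no condition on the exponents is required at all. Everything else reproduces the routine already carried out in Proposition \ref{P2}: split $\Omega$ into $\Omega\setminus\overline{\Omega}_\delta$ and $\Omega_\delta$, replace $\xi_{i,\delta}$ by constant multiples of $d(x)$ via Lemma \ref{L5}, and finally choose $C$ sufficiently large.
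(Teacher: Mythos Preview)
Your argument is correct and mirrors the paper's proof almost exactly: both start from estimate (\ref{12}), split $\Omega$ into $\Omega\setminus\overline{\Omega}_\delta$ and $\Omega_\delta$, bound $\underline{u}_2^{\beta_i(x)}$ below via Lemma \ref{L5} to extract the factor $(c_0\delta)^{\beta_i^+}C^{-\beta_i^+}$ on $\Omega\setminus\overline{\Omega}_\delta$, and then absorb everything by taking $C$ large. Your observation that the $\Omega_\delta$ contribution is automatic because $R_i\ge 0$ while the right side of (\ref{12}) carries a negative term there is exactly the mechanism the paper uses implicitly.
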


\begin{proof}
Assume $\alpha _{i}^{+}<0<\beta _{i}^{-}$. The case $\beta _{i}^{+}<0<\alpha
_{i}^{-}$ can be handled in much the same way. By (\ref{sub-super}), (\ref{5}%
) and (\ref{12}), one has%
\begin{equation*}
\begin{array}{l}
m_{i}\int_{\Omega }L^{\alpha _{i}^{-}}\underline{u}_{2}^{\beta
_{i}(x)}\varphi _{i}dx=m_{i}L^{\alpha _{i}^{-}}\int_{\Omega }(C^{-1}\xi
_{2,\delta })^{\beta _{i}(x)}\varphi _{i}dx \\ 
\geq m_{i}L^{\alpha _{i}^{-}}C^{-\beta _{i}^{+}}\int_{\Omega
}(c_{0}d(x))^{\beta _{i}(x)}\varphi _{i}dx \\ 
\geq m_{i}L^{\alpha _{i}^{-}}C^{-\beta _{i}^{+}}((c_{0}\delta )^{\beta
_{i}^{+}}\int_{\Omega \backslash \overline{\Omega }_{\delta }}\varphi
_{i}dx+\int_{\Omega _{\delta }}(c_{0}d(x))^{\beta _{i}(x)}\varphi _{i}dx) \\ 
\geq \int_{\Omega }\left\vert \nabla \underline{u}_{i}\right\vert
^{p_{i}(x)-2}\nabla \underline{u}_{i}\nabla \varphi _{i}dx,%
\end{array}%
\end{equation*}%
for all $\varphi _{i}\in W_{0}^{1,p_{i}(x)}(\Omega )$ with $\varphi _{i}\geq
0,$ and for $C>0$ large enough. If $\alpha _{i}^{+},\beta _{i}^{+}<0$, from (%
\ref{12}), it follows that%
\begin{equation*}
\begin{array}{l}
m_{i}\int_{\Omega }L^{\alpha _{i}^{-}+\beta _{i}^{-}}\varphi _{i}dx=
m_{i}L^{\alpha _{i}^{-}+\beta _{i}^{-}}\left( \int_{\Omega \backslash 
\overline{\Omega }_{\delta }}\varphi _{i}dx+\int_{\Omega _{\delta }}\varphi
_{i}dx\right) \\ 
\geq \int_{\Omega }\left\vert \nabla \underline{u}_{i}\right\vert
^{p_{i}(x)-2}\nabla \underline{u}_{i}\nabla \phi _{i}dx,%
\end{array}%
\end{equation*}%
for all $\varphi _{i}\in W_{0}^{1,p_{i}(x)}(\Omega )$ with $\varphi _{i}\geq
0,$ provided $C>0$ is sufficiently large. This ends the proof.
\end{proof}

\section{Proof of the main result}

\label{S5}

\subsection{Case $\protect\alpha _{i}^{\mp }+\protect\beta _{i}^{\mp }>0$}

Using the functions in (\ref{sub-super}) as well as the constant $R>0$ in (%
\ref{8}), we introduce the closed, bounded and convex set 
\begin{equation*}
\mathcal{K}_{C}=\left\{ (y_{1},y_{2})\in C_{0}^{1}(\overline{\Omega })^{2}:%
\underline{u}_{i}\leq y_{i}\leq \overline{u}_{i}\text{ in }\Omega \text{ \
and \ }\left\Vert \nabla y_{i}\right\Vert _{\infty }\leq CR\right\} .
\end{equation*}%
Define the map%
\begin{equation*}
\begin{array}{lll}
\mathcal{T}: & \mathcal{K}_{C} & \rightarrow C_{0}^{1}(\overline{\Omega }%
)\times C_{0}^{1}(\overline{\Omega }) \\ 
& (z_{1},z_{2}) & \mapsto \mathcal{T}%
(z_{1},z_{2})=(u_{1},u_{2})_{(z_{1},z_{2})},%
\end{array}%
\end{equation*}%
where $(u_{1},u_{2})$ is required to satisfy $(\mathrm{P}_{(z_{1},z_{2})})$.
It is worth noting that solutions of problem $(\mathrm{P}_{(z_{1},z_{2})})$
coincide with the fixed points of the operator $\mathcal{T}$. To reach the
desired conclusion, we shall apply Schauder's fixed point theorem.

For $(z_{1},z_{2})\in \mathcal{K}_{C}$ we have%
\begin{equation*}
z_{1}^{\alpha _{i}(x)}z_{2}^{\beta _{i}(x)}\leq \left\{ 
\begin{array}{ll}
\overline{u}_{1}^{\alpha _{i}(x)}\overline{u}_{2}^{\beta _{i}(x)} & \text{if 
}\alpha _{i}^{-},\beta _{i}^{-}>0 \\ 
\underline{u}_{1}^{\alpha _{i}(x)}\overline{u}_{2}^{\beta _{i}(x)} & \text{%
if }\alpha _{i}^{+}<0<\beta _{i}^{-} \\ 
\overline{u}_{1}^{\alpha _{i}(x)}\underline{u}_{2}^{\beta _{i}(x)} & \text{%
if }\beta _{i}^{+}<0<\alpha _{i}^{-}.%
\end{array}%
\right.
\end{equation*}%
In $\Omega$, using (\ref{sub-super}) together with Lemma \ref{L5}, we obtain%
\begin{equation*}
\begin{array}{ll}
z_{1}^{\alpha _{i}(x)}z_{2}^{\beta _{i}(x)} & \leq \left\{ 
\begin{array}{cc}
C^{\alpha _{i}^{+}+\beta _{i}^{+}}d(x)^{\alpha _{i}(x)+\beta _{i}(x)} & 
\text{if }\alpha _{i}^{-},\beta _{i}^{-}>0 \\ 
C^{-\alpha _{i}^{-}+\beta _{i}^{+}}d(x)^{\alpha _{i}(x)+\beta _{i}(x)} & 
\text{if }\alpha _{i}^{+}<0<\beta _{i}^{-} \\ 
C^{\alpha _{i}^{+}-\beta _{i}^{-}}d(x)^{\alpha _{i}(x)+\beta _{i}(x)} & 
\text{if }\beta _{i}^{+}<0<\alpha _{i}^{-}%
\end{array}%
\right. \\ 
& \leq \left\{ 
\begin{array}{ll}
C^{\alpha _{i}^{+}+\beta _{i}^{+}}\mbox{diam}(\Omega)^{\alpha _{i}(x)+\beta
_{i}(x)} & \text{if }\alpha _{i}^{-},\beta _{i}^{-}>0 \\ 
C^{-\alpha _{i}^{-}+\beta _{i}^{+}}\mbox{diam}(\Omega)^{\alpha _{i}(x)+\beta
_{i}(x)} & \text{if }\alpha _{i}^{+}<0<\beta _{i}^{-} \\ 
C^{\alpha _{i}^{+}-\beta _{i}^{-}}\mbox{diam}(\Omega)^{\alpha _{i}(x)+\beta
_{i}(x)} & \text{if }\beta _{i}^{+}<0<\alpha _{i}^{-}.%
\end{array}%
\right.%
\end{array}%
\end{equation*}%
Thus, we derive from $($\textrm{\textbf{H}}$_{f}),$ (\ref{sub-super}) and
Lemma \ref{L5} the estimate%
\begin{equation}
\begin{array}{l}
|f_{i}(x,z_{1},z_{2},\nabla z_{1},\nabla z_{2})|\leq M_{i}(z_{1}^{\alpha
_{i}(x)}z_{2}^{\beta _{i}(x)}+\left\vert \nabla z_{1}\right\vert ^{\gamma
_{i}(x)}+\left\vert \nabla z_{2}\right\vert ^{\bar{\gamma}_{i}(x)}) \\ 
\leq M_{i}C^{|\alpha _{i}^{\pm }|+|\beta _{i}^{\pm }|}L_{0}+2M_{i}(CR)^{\max
\{\gamma _{i}^{+},\bar{\gamma}_{i}^{+}\}}\text{ in }\Omega ,%
\end{array}
\label{15}
\end{equation}%
where constant $L_{0}>0$ is independent of $C$.

Consequently, the unique solvability of $(u_{1},u_{2})$ in $(\mathrm{P}%
_{(z_{1},z_{2})}),$ which is readily derived from Minty Browder's Theorem
(see, e.g., \cite{BR}), guarantees that $\mathcal{T}$ is well defined.
Moreover, the regularity theory up to the boundary in \cite{F} yields $%
(u_{1},u_{2})\in C_{0}^{1,\tau }(\overline{\Omega })^{2}$ for certain $\tau
\in (0,1)$ and a constant $\hat{R}>0$ such that it holds%
\begin{equation}
\left\Vert u_{i}\right\Vert _{C^{1,\tau }(\overline{\Omega })}<\hat{R}.
\label{14}
\end{equation}

\begin{proposition}
\label{P1}$\mathcal{K}_{C}$ is invariant by the operator $\mathcal{T}$.
\end{proposition}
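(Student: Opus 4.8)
The goal is to show that if $(z_1,z_2)\in\mathcal{K}_C$ then $\mathcal{T}(z_1,z_2)=(u_1,u_2)$ again lies in $\mathcal{K}_C$, i.e. that $\underline{u}_i\le u_i\le\overline{u}_i$ in $\Omega$ and $\|\nabla u_i\|_\infty\le CR$. The plan is to handle these two requirements separately. For the ordering, I would invoke the comparison results of Section \ref{S4}: by Proposition \ref{P2}, $\underline{u}_i$ is a subsolution and $\overline{u}_i$ a supersolution of the decoupled equation $-\Delta_{p_i(x)}w=f_i(x,z_1,z_2,\nabla z_1,\nabla z_2)$ whenever $(z_1,z_2)\in\mathcal{K}_C$. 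More precisely, from the membership $(z_1,z_2)\in\mathcal{K}_C$ we have the pointwise bounds on $z_1^{\alpha_i(x)}z_2^{\beta_i(x)}$ recorded just before the proposition, which combined with hypothesis $(\mathrm{H}_f)$ give
\begin{equation*}
m_i\,\underline{u}_1^{\alpha_i(x)}\underline{u}_2^{\beta_i(x)}\ \le\ f_i(x,z_1,z_2,\nabla z_1,\nabla z_2)\ \le\ M_i\big(z_1^{\alpha_i(x)}z_2^{\beta_i(x)}+|\nabla z_1|^{\gamma_i(x)}+|\nabla z_2|^{\bar\gamma_i(x)}\big)
\end{equation*}
in each of the three sign regimes (using the appropriate mixed $\underline{u}/\overline{u}$ combination on the left). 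Feeding the left inequality into \eqref{32} and the right inequality, together with the gradient bound $\|\nabla z_i\|_\infty\le CR$ and \eqref{15}, into \eqref{33} shows that $-\Delta_{p_i(x)}\underline{u}_i\le f_i(\cdot,z)\le -\Delta_{p_i(x)}\overline{u}_i$ in the weak sense, with test functions $\varphi_i\ge0$. Since $u_i$ solves $-\Delta_{p_i(x)}u_i=f_i(\cdot,z)$ with $u_i=\underline{u}_i=\overline{u}_i=0$ on $\partial\Omega$, the weak comparison principle for the $p_i(x)$-Laplacian yields $\underline{u}_i\le u_i\le\overline{u}_i$ in $\Omega$.

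For the gradient bound I would use the a priori estimate of Lemma \ref{L2}. From \eqref{15} the right-hand side $h_i:=f_i(\cdot,z_1,z_2,\nabla z_1,\nabla z_2)$ is in $L^\infty(\Omega)$ with
\begin{equation*}
\|h_i\|_\infty\ \le\ M_i C^{|\alpha_i^\pm|+|\beta_i^\pm|}L_0+2M_i(CR)^{\max\{\gamma_i^+,\bar\gamma_i^+\}},
\end{equation*}
a bound that is a sum of powers of $C$ with exponents strictly less than $p_i^--1$ by $(\mathrm{H}_{\alpha,\beta,\gamma})$. Applying Lemma \ref{L2} to $-\Delta_{p_i(x)}u_i=h_i$ gives $\|\nabla u_i\|_\infty\le\bar k_{p_i}\|h_i\|_\infty^{1/(p_i^\pm-1)}$, and since $1/(p_i^\pm-1)\le 1/(p_i^--1)$ the resulting quantity is bounded by a constant times $C^{\theta_i}$ with $\theta_i<1$. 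Hence for $C>0$ chosen large enough (the same largeness already required in Propositions \ref{P2} and \ref{P6}, enlarged if necessary) one gets $\bar k_{p_i}\|h_i\|_\infty^{1/(p_i^\pm-1)}\le CR$, which is exactly $\|\nabla u_i\|_\infty\le CR$. Combined with the regularity $(u_1,u_2)\in C_0^{1,\tau}(\overline\Omega)^2$ from \eqref{14}, this places $\mathcal{T}(z_1,z_2)$ in $\mathcal{K}_C$.

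The main obstacle is the bookkeeping in the first step: one must check, regime by regime (the three sign patterns for $\alpha_i,\beta_i$), that the pointwise upper bound on $z_1^{\alpha_i(x)}z_2^{\beta_i(x)}$ coming from $(z_1,z_2)\in\mathcal{K}_C$ together with $(\mathrm{H}_f)$ really reproduces the right-hand sides appearing in \eqref{32}–\eqref{33}, so that Proposition \ref{P2} applies verbatim; in particular the $2M_i(RC)^{\max\{\gamma_i^+,\bar\gamma_i^+\}}$ term in \eqref{33} is precisely what absorbs the gradient contributions $M_i(|\nabla z_1|^{\gamma_i(x)}+|\nabla z_2|^{\bar\gamma_i(x)})$ via $\|\nabla z_i\|_\infty\le CR$. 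Once that matching is in place, the weak comparison principle and Lemma \ref{L2} do the rest, and the choice of $C$ is dictated by requiring simultaneously the conclusions of Propositions \ref{P2}, \ref{P6} and the inequality $\bar k_{p_i}\|h_i\|_\infty^{1/(p_i^\pm-1)}\le CR$.
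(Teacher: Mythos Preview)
Your proposal is correct and follows essentially the same route as the paper's proof: first obtain $\underline{u}_i\le u_i\le\overline{u}_i$ via Proposition~\ref{P2}, $(\mathrm{H}_f)$, and the weak comparison principle, then get the gradient bound $\|\nabla u_i\|_\infty\le CR$ from Lemma~\ref{L2} applied to \eqref{15} using that all exponents are strictly below $p_i^--1$. One harmless slip: your parenthetical reference to Proposition~\ref{P6} is out of place here, since that proposition pertains to the complementary case $\alpha_i^\mp+\beta_i^\mp\le 0$ and plays no role in $\mathcal{K}_C$.
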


\begin{proof}
Using the fact that $z_{1},z_{2}\in \mathcal{K}_{C}$, it follows that%
\begin{equation*}
z_{1}^{\alpha _{i}(x)}z_{2}^{\beta _{i}(x)}\geq \left\{ 
\begin{array}{ll}
\underline{u}_{1}^{\alpha _{i}(x)}\underline{u}_{2}^{\beta _{i}(x)} & \text{%
if }\alpha _{i}^{-},\beta _{i}^{-}>0 \\ 
\overline{u}_{1}^{\alpha _{i}(x)}\underline{u}_{2}^{\beta _{i}(x)} & \text{%
if }\alpha _{i}^{+}<0<\beta _{i}^{-} \\ 
\underline{u}_{1}^{\alpha _{i}(x)}\overline{u}_{2}^{\beta _{i}(x)} & \text{%
if }\beta _{i}^{+}<0<\alpha _{i}^{-}%
\end{array}%
\right. \text{ in }\Omega .
\end{equation*}%
Then, bearing in mind $($\textrm{\textbf{H}}$_{f})$ and Proposition \ref{P2}%
, the weak comparison principle entails%
\begin{equation}
\underline{u}_{1}\leq y_{1}\leq \overline{u}_{1}\text{ \ and \ }\underline{u}%
_{2}\leq u_{2}\leq \overline{u}_{2}\text{ in }\Omega .  \label{17}
\end{equation}%
On the other hand, since $\max \{|\alpha _{i}^{\pm }|+|\beta _{i}^{\pm
}|,\gamma _{i}^{+},\bar{\gamma}_{i}^{+}\}<p_{i}^{-}-1$, it follows from (\ref%
{15}) that%
\begin{equation*}
\begin{array}{l}
|f_{i}(x,z_{1},z_{2},\nabla z_{1},\nabla z_{2})|\leq (CR)^{p_{i}^{-}-1},%
\end{array}%
\end{equation*}%
provided that $C$ is sufficiently large. Hence, thanks to Lemma \ref{L2}, we
infer that%
\begin{equation}
\begin{array}{l}
\left\Vert \nabla u_{1}\right\Vert _{\infty },\left\Vert \nabla
u_{2}\right\Vert _{\infty }\leq CR.%
\end{array}
\label{18}
\end{equation}%
Consequently, gathering (\ref{14})-(\ref{18}) together yields $%
(u_{1},u_{2})\in \mathcal{K}_{C},$ showing that $\mathcal{T}(\mathcal{K}%
_{C})\subset \mathcal{K}_{C}$.
\end{proof}

\begin{proposition}
\label{P3}$\mathcal{T}$ is compact and continuous.
\end{proposition}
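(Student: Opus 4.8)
The plan is to prove that $\mathcal{T}:\mathcal{K}_C\to\mathcal{K}_C$ is both continuous and compact by exploiting the uniform $C^{1,\tau}$-bound \eqref{14} obtained from the regularity theory in \cite{F}, together with the compact embedding $C^{1,\tau}(\overline{\Omega})\hookrightarrow C^1(\overline{\Omega})$. Compactness is the easier half: given a bounded sequence $(z_1^n,z_2^n)$ in $\mathcal{K}_C$, the images $(u_1^n,u_2^n)=\mathcal{T}(z_1^n,z_2^n)$ satisfy $\|u_i^n\|_{C^{1,\tau}(\overline{\Omega})}<\hat R$ by \eqref{14}, so by Arzelà--Ascoli a subsequence converges in $C_0^1(\overline{\Omega})^2$; hence $\mathcal{T}(\mathcal{K}_C)$ is relatively compact in $C_0^1(\overline{\Omega})^2$, which is precisely compactness of $\mathcal{T}$.

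For continuity, first I would take a sequence $(z_1^n,z_2^n)\to(z_1,z_2)$ in $C_0^1(\overline{\Omega})^2$ with all terms in $\mathcal{K}_C$, and set $(u_1^n,u_2^n)=\mathcal{T}(z_1^n,z_2^n)$. By the uniform bound \eqref{14} and Arzelà--Ascoli, every subsequence of $(u_1^n,u_2^n)$ has a further subsequence converging in $C_0^1(\overline{\Omega})^2$ to some limit $(u_1,u_2)$; by a standard subsequence argument it suffices to show that \emph{every} such limit equals $\mathcal{T}(z_1,z_2)$, and then the whole sequence converges. To identify the limit, I would pass to the limit in the weak formulation
\begin{equation*}
\int_\Omega |\nabla u_i^n|^{p_i(x)-2}\nabla u_i^n\nabla\varphi_i\,dx=\int_\Omega f_i(x,z_1^n,z_2^n,\nabla z_1^n,\nabla z_2^n)\varphi_i\,dx,\qquad \varphi_i\in W_0^{1,p_i(x)}(\Omega).
\end{equation*}
On the left, $\nabla u_i^n\to\nabla u_i$ uniformly, so $|\nabla u_i^n|^{p_i(x)-2}\nabla u_i^n\to|\nabla u_i|^{p_i(x)-2}\nabla u_i$ uniformly and the left-hand side converges. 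On the right, since all $(z_1^n,z_2^n)$ lie in $\mathcal{K}_C$ we have $\underline u_j\le z_j^n\le\overline u_j$ with $\underline u_j(x)\ge c_0C^{-1}d(x)$, so the arguments stay in a region where $f_i$ is continuous and, by \eqref{15}, uniformly bounded; the Carathéodory property of $f_i$ gives pointwise a.e. convergence of the integrand and dominated convergence (using the $L^\infty$ bound from \eqref{15} and $|\Omega|<\infty$) yields convergence of the right-hand side. Hence $(u_1,u_2)$ is a weak solution of $(\mathrm{P}_{(z_1,z_2)})$, and by the uniqueness part (Minty--Browder) it coincides with $\mathcal{T}(z_1,z_2)$, proving continuity.

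The main obstacle is the passage to the limit in the convection/singular term near $\partial\Omega$, where $f_i$ can blow up like $d(x)^{\alpha_i(x)+\beta_i(x)}$; this is exactly why membership in $\mathcal{K}_C$ (hence the two-sided bound $\underline u_j\le z_j^n\le\overline u_j$ and the gradient bound $\|\nabla z_j^n\|_\infty\le CR$) is essential — it confines all arguments of $f_i$ away from the singular set and provides the uniform $L^\infty$ domination in \eqref{15} needed for dominated convergence, so no delicate estimate beyond what is already established is required. A minor technical point worth spelling out is the uniform convergence of $\xi\mapsto|\xi|^{p_i(x)-2}\xi$: on the compact range of gradients this map is uniformly continuous in $\xi$ uniformly in $x$ since $p_i\in C^1(\overline{\Omega})$, which justifies the limit on the left-hand side of the weak formulation.
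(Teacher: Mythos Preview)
Your proof is correct and follows essentially the same strategy as the paper's: compactness via the uniform $C^{1,\tau}$-bound \eqref{14} and the compact embedding $C^{1,\tau}(\overline{\Omega})\hookrightarrow C_0^1(\overline{\Omega})$, and continuity by passing to the limit in the auxiliary problem $(\mathrm{P}_{(z_1,z_2)})$ using Arzel\`a--Ascoli and convergence of the right-hand side. Your version is in fact more carefully argued than the paper's: the paper's proof simply asserts that $f_i(\cdot,z_{1,n},z_{2,n},\nabla z_{1,n},\nabla z_{2,n})\to f_i(\cdot,z_1,z_2,\nabla z_1,\nabla z_2)$ in $W^{-1,p_i'(x)}(\Omega)$ and concludes, whereas you spell out the subsequence--subsequence device, the dominated convergence via the $L^\infty$ bound \eqref{15}, the limit in both sides of the weak formulation, and the identification of the limit through Minty--Browder uniqueness. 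One small remark: your closing paragraph overstates the difficulty near $\partial\Omega$; in the regime $\alpha_i^\mp+\beta_i^\mp>0$ treated here the estimate \eqref{15} is a genuine $L^\infty$ bound (no $d(x)$-blow-up), so the passage to the limit is immediate and your caveat about the singular set is unnecessary in this case.
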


\begin{proof}
On the basis of (\ref{14}) and the compactness of the embedding $C^{1,\tau }(%
\overline{\Omega })\subset C_{0}^{1}(\overline{\Omega })$ we infer that $%
\mathcal{T(}C_{0}^{1}(\overline{\Omega })\times C_{0}^{1}(\overline{\Omega }%
))$ is a relatively compact subset of $C_{0}^{1}(\overline{\Omega })\times
C_{0}^{1}(\overline{\Omega })$. This shows the compactness of the operator $%
\mathcal{T}$.

Next, we prove that $\mathcal{T}$ is continuous with respect to the topology
of $C_{0}^{1}(\overline{\Omega })\times C_{0}^{1}(\overline{\Omega })$. Let $%
(z_{1,n},z_{2,n})\rightarrow (z_{1},z_{2})$ in $C_{0}^{1}(\overline{\Omega }%
)\times C_{0}^{1}(\overline{\Omega })$ for all $n$. Denoting $\left(
u_{1,n},u_{2,n}\right) =\mathcal{T(}z_{1,n},z_{2,n})$, we have from (\ref{14}%
) that $\left( u_{1,n},u_{2,n}\right) \in C^{1,\tau }(\overline{\Omega }%
)\times C^{1,\tau }(\overline{\Omega })$. By Ascoli-Arzel\`{a} Theorem there
holds 
\begin{equation*}
\left( u_{1,n},u_{2,n}\right) \rightarrow (u_{1},u_{2})\text{ in }C_{0}^{1}(%
\overline{\Omega })\times C_{0}^{1}(\overline{\Omega }).
\end{equation*}%
On the other hand, for $z_{1},z_{2}\in \mathcal{K}_{C}$ one has%
\begin{equation*}
f_{i}(x,z_{1,n},z_{2,n},\nabla z_{1,n},\nabla z_{2,n})\rightarrow
f_{i}(x,z_{1},z_{2},\nabla z_{1},\nabla z_{2})\in W^{-1,p_{i}^{\prime
}(x)}(\Omega ).
\end{equation*}%
Thus, we conclude that $\mathcal{T}$ is continuous.
\end{proof}

\subsection{Case $\protect\alpha _{i}^{\mp }+\protect\beta _{i}^{\mp }\leq 0$%
}

Using the functions $\underline{u}_{i}$ ($i=1,2$) in (\ref{sub-super}), the $%
W^{1,p_{i}(x)}$- gradient estimate $\tilde{L}$ in Lemma \ref{weak_bound} as
well as the $L^{\infty }$-bound $L$ in Lemma \ref{L3}, let introduce the set 
\begin{equation*}
\mathcal{\tilde{K}}_{\tilde{L}}=\left\{ 
\begin{array}{c}
(y_{1},y_{2})\in \underset{i=1,2}{\prod}W_{0}^{1,p_{i}(x)}(\Omega ):%
\underline{u}_{i}\leq y_{i}\leq L\text{ in }\Omega \text{ }\text{\ and \ }%
\left\Vert \nabla y_{i}\right\Vert _{p_{i}(x)}\leq \tilde{L}%
\end{array}%
\right\} ,
\end{equation*}%
which is closed, bounded and convex in $W_{0}^{1,p_{1}(x)}(\Omega )\times
W_{0}^{1,p_{2}(x)}(\Omega )$. Define the operator 
\begin{equation*}
\begin{array}{lll}
\mathcal{\tilde{T}}: & \mathcal{\tilde{K}}_{\tilde{L}} & \rightarrow
W_{0}^{1,p_{1}(x)}(\overline{\Omega })\times W_{0}^{1,p_{2}(x)}(\overline{%
\Omega }) \\ 
& (z_{1},z_{2}) & \mapsto \mathcal{\tilde{T}}(z_{1},z_{2})=(u_{1},u_{2}),%
\end{array}%
\end{equation*}%
where $(u_{1},u_{2})$ is required to satisfy $(\mathrm{P}_{(z_{1},z_{2})})$.
On account of (\ref{5}), (\ref{sub-super}) and Lemma \ref{weak_bound}, we
deduce that $(u_{1},u_{2})$ is the unique solution of problem $(\mathrm{P}%
_{(z_{1},z_{2})})$. Then, the map $\mathcal{\tilde{T}}$ is well defined.

\begin{proposition}
\label{P4}The set $\mathcal{\tilde{K}}_{\tilde{L}}$ is invariant by the
operator $\mathcal{\tilde{T}}$.
\end{proposition}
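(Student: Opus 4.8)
The plan is to show $\mathcal{\tilde{T}}(\mathcal{\tilde{K}}_{\tilde{L}})\subset\mathcal{\tilde{K}}_{\tilde{L}}$ by verifying the three defining constraints of $\mathcal{\tilde{K}}_{\tilde{L}}$ for the image $(u_1,u_2)=\mathcal{\tilde{T}}(z_1,z_2)$, for each $(z_1,z_2)\in\mathcal{\tilde{K}}_{\tilde{L}}$. Fix such $(z_1,z_2)$; note that since $\underline{u}_i\leq z_i\leq L$ and since $\underline{u}_i=C^{-1}\xi_{i,\delta}\geq C^{-1}c_0 d(x)$ by Lemma~\ref{L5}, the pair $(z_1,z_2)$ satisfies the hypothesis (\ref{99}) with $\tilde{c}=C^{-1}c_0$ and with the prescribed $\tilde{L}$; hence Lemma~\ref{weak_bound} applies and gives at once the gradient bound $\left\Vert\nabla u_i\right\Vert_{p_i(x)}\leq\tilde{L}$, which is the third constraint. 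Likewise Lemma~\ref{L3} applies to $(u_1,u_2)$ and yields $\left\Vert u_i\right\Vert_\infty<L$, so in particular $u_i\leq L$ in $\Omega$, which is the upper half of the second constraint.

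It remains to establish the lower bound $u_i\geq\underline{u}_i$ in $\Omega$. The idea is to compare $u_i$, which solves $-\Delta_{p_i(x)}u_i=f_i(x,z_1,z_2,\nabla z_1,\nabla z_2)$ in $\Omega$ with $u_i=0$ on $\partial\Omega$, against the subsolution $\underline{u}_i$. By $($\textrm{H}$_f)$ and the fact that $z_i\geq\underline{u}_i$ together with $z_i\leq L$, one bounds $f_i$ from below by the appropriate singular monomial: in the case $\alpha_i^+<0<\beta_i^-$ one has $z_1^{\alpha_i(x)}z_2^{\beta_i(x)}\geq L^{\alpha_i^-}\underline{u}_2^{\beta_i(x)}$ because $\alpha_i(x)<0$ forces $z_1^{\alpha_i(x)}\geq L^{\alpha_i(x)}\geq L^{\alpha_i^-}$ (recall $L>1$) while $\beta_i(x)>0$ forces $z_2^{\beta_i(x)}\geq\underline{u}_2^{\beta_i(x)}$; the symmetric case $\beta_i^+<0<\alpha_i^-$ is handled the same way, and the case $\alpha_i^+,\beta_i^+<0$ gives $z_1^{\alpha_i(x)}z_2^{\beta_i(x)}\geq L^{\alpha_i^-+\beta_i^-}$. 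Therefore $f_i(x,z_1,z_2,\nabla z_1,\nabla z_2)\geq m_i$ times exactly the right-hand side appearing in Proposition~\ref{P6}, which by that proposition is $\geq-\Delta_{p_i(x)}\underline{u}_i$ in the weak sense. Thus $-\Delta_{p_i(x)}u_i\geq-\Delta_{p_i(x)}\underline{u}_i$ weakly with $u_i=\underline{u}_i=0$ on $\partial\Omega$ (recall $\xi_{i,\delta}=0$ on $\partial\Omega$), and the weak comparison principle for the $p_i(x)$-Laplacian gives $u_i\geq\underline{u}_i$ in $\Omega$.

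Collecting the three facts — $\underline{u}_i\leq u_i$, $u_i\leq L$, and $\left\Vert\nabla u_i\right\Vert_{p_i(x)}\leq\tilde{L}$ for $i=1,2$ — shows $(u_1,u_2)\in\mathcal{\tilde{K}}_{\tilde{L}}$, hence $\mathcal{\tilde{T}}(\mathcal{\tilde{K}}_{\tilde{L}})\subset\mathcal{\tilde{K}}_{\tilde{L}}$. The main obstacle I anticipate is the bookkeeping in the lower estimate of $f_i$: one must carefully split into the sign cases for $\alpha_i,\beta_i$ and, in each, match the monomial produced by $($\textrm{H}$_f)$ and $z_i\geq\underline{u}_i$, $z_i\leq L$ to precisely the bound furnished by Proposition~\ref{P6}, paying attention to which exponents are negative (so that the bound $z_i\leq L$, not $z_i\geq\underline{u}_i$, is the useful one) and to the role $L>1$ plays in making $L^{\alpha_i(x)}\geq L^{\alpha_i^-}$. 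Once the right-hand side of $($\textrm{H}$_f)$ has been bounded below by $-\Delta_{p_i(x)}\underline{u}_i$, the weak comparison principle closes the argument routinely.
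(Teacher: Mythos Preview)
Your proof is correct and follows essentially the same approach as the paper: invoke Lemma~\ref{weak_bound} for the gradient bound, Lemma~\ref{L3} for the upper bound $u_i\leq L$, and combine $(\mathrm{H}_f)$ with Proposition~\ref{P6} and the weak comparison principle for the lower bound $u_i\geq\underline{u}_i$. You have in fact spelled out the sign-case bookkeeping for the lower estimate of $f_i$ in more detail than the paper's terse proof, which simply asserts that combining $(\mathrm{H}_f)$ with Proposition~\ref{P6} yields $u_i\geq\underline{u}_i$.
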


\begin{proof}
For any $(z_{1},z_{2})\in \mathcal{\tilde{K}}_{\tilde{L}},$ combining $(%
\mathrm{H}_{f})$ with Proposition \ref{P6} we derive that $u_{i}\geq 
\underline{u}_{i}$ in $\Omega$ $(i=1,2)$. Moreover, Lemma \ref{L3} implies
that $u_{i}\leq L$ while Lemma \ref{weak_bound} ensures that there exists a
large constant $\tilde{L}>0$ such that $\left\Vert \nabla y_{i}\right\Vert
_{p_{i}(x)}\leq \tilde{L}$. Hence, $u_{i}\in \mathcal{\tilde{K}}_{\tilde{L}}$
establishing that $\mathcal{\tilde{T}}(\mathcal{\tilde{K}}_{\tilde{L}%
})\subset \mathcal{\tilde{K}}_{\tilde{L}}$.
\end{proof}

\begin{proposition}
\label{P5}The map $\mathcal{\tilde{T}}$ is compact and continuous.
\end{proposition}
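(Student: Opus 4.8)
The plan is to prove Proposition \ref{P5} by mimicking the argument of Proposition \ref{P3}, but now working in the weak spaces $W_{0}^{1,p_{1}(x)}(\Omega )\times W_{0}^{1,p_{2}(x)}(\Omega )$ rather than in $C_{0}^{1}(\overline{\Omega })^{2}$, since under $(\mathrm{\tilde{H}}_{\alpha ,\beta ,\gamma })$ the right-hand sides $f_{i}$ need not be bounded and no $C^{1,\tau }$-estimate is available. The two properties to establish are compactness of $\mathcal{\tilde{T}}(\mathcal{\tilde{K}}_{\tilde{L}})$ and continuity of $\mathcal{\tilde{T}}$ with respect to the norm of $W_{0}^{1,p_{1}(x)}(\Omega )\times W_{0}^{1,p_{2}(x)}(\Omega )$.

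For compactness, I would first note that by Proposition \ref{P4} the image $\mathcal{\tilde{T}}(\mathcal{\tilde{K}}_{\tilde{L}})$ is contained in the bounded set $\mathcal{\tilde{K}}_{\tilde{L}}$, so $(u_{1},u_{2})=\mathcal{\tilde{T}}(z_{1},z_{2})$ satisfies $\Vert \nabla u_{i}\Vert _{p_{i}(x)}\leq \tilde{L}$ and $\underline{u}_{i}\leq u_{i}\leq L$. The key point is to upgrade weak compactness in $W_{0}^{1,p_{i}(x)}(\Omega )$ to strong convergence: given a sequence $(z_{1,n},z_{2,n})\subset \mathcal{\tilde{K}}_{\tilde{L}}$, the bound (\ref{belonging})--(\ref{21*}) combined with (\ref{99}) shows $f_{i}(\cdot ,z_{1,n},z_{2,n},\nabla z_{1,n},\nabla z_{2,n})$ is bounded in $L^{p_{i}^{\prime }(x)}(\Omega )\cap L^{N}(\Omega )$ uniformly in $n$; passing to a subsequence it converges weakly there, and the corresponding solutions $(u_{1,n},u_{2,n})$ are bounded in $W_{0}^{1,p_{i}(x)}(\Omega )$, hence converge weakly (up to subsequence) to some $(u_{1},u_{2})$. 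Testing the equations $(\mathrm{P}_{(z_{1,n},z_{2,n})})$ with $u_{i,n}-u_{i}$ and using the compact Sobolev embedding $W_{0}^{1,p_{i}(x)}(\Omega )\hookrightarrow L^{N^{\prime }}(\Omega )$ to kill the right-hand side, one obtains $\int_{\Omega }(|\nabla u_{i,n}|^{p_{i}(x)-2}\nabla u_{i,n}-|\nabla u_{i}|^{p_{i}(x)-2}\nabla u_{i})\cdot \nabla (u_{i,n}-u_{i})\,dx\to 0$; the standard $(S_{+})$-property of the $p_{i}(x)$-Laplacian on $W_{0}^{1,p_{i}(x)}(\Omega )$ then yields $\nabla u_{i,n}\to \nabla u_{i}$ in $L^{p_{i}(x)}(\Omega )$, i.e.\ $u_{i,n}\to u_{i}$ strongly. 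This proves $\mathcal{\tilde{T}}(\mathcal{\tilde{K}}_{\tilde{L}})$ is relatively compact.

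For continuity, take $(z_{1,n},z_{2,n})\to (z_{1},z_{2})$ in $W_{0}^{1,p_{1}(x)}(\Omega )\times W_{0}^{1,p_{2}(x)}(\Omega )$; up to a subsequence, $z_{i,n}\to z_{i}$ and $\nabla z_{i,n}\to \nabla z_{i}$ a.e.\ in $\Omega $. Since $f_{i}$ is Carathéodory and the $z_{i,n}$ stay in $\mathcal{\tilde{K}}_{\tilde{L}}$ (so $z_{i,n}\geq \underline{u}_{i}\geq c_{0}C^{-1}d(x)>0$, keeping them away from the singularity), $f_{i}(\cdot ,z_{1,n},z_{2,n},\nabla z_{1,n},\nabla z_{2,n})\to f_{i}(\cdot ,z_{1},z_{2},\nabla z_{1},\nabla z_{2})$ a.e.; together with the uniform $L^{p_{i}^{\prime }(x)}(\Omega )\cap L^{N}(\Omega )$-bound this gives strong convergence of the right-hand sides in $W^{-1,p_{i}^{\prime }(x)}(\Omega )$ by a Vitali/equi-integrability argument. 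Repeating the $(S_{+})$-argument above shows $\mathcal{\tilde{T}}(z_{1,n},z_{2,n})\to \mathcal{\tilde{T}}(z_{1},z_{2})$ strongly in $W_{0}^{1,p_{1}(x)}(\Omega )\times W_{0}^{1,p_{2}(x)}(\Omega )$, and since the limit is independent of the subsequence (uniqueness in Lemma \ref{weak_bound}), the whole sequence converges. Hence $\mathcal{\tilde{T}}$ is continuous.

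The main obstacle I anticipate is the control of the right-hand side near $\partial \Omega $: because the exponents $\alpha _{i},\beta _{i}$ may be negative, the singular term $z_{1}^{\alpha _{i}(x)}z_{2}^{\beta _{i}(x)}$ blows up as $d(x)\to 0$, and the passage to the limit in the convection/singular terms must use the lower barrier $\underline{u}_{i}\geq c_{0}C^{-1}d(x)$ from $\mathcal{\tilde{K}}_{\tilde{L}}$ together with the integrability estimate (\ref{lazer-mackenna})--(\ref{31*}) to get a uniform integrable majorant; this is exactly where $(\mathrm{\tilde{H}}_{\alpha ,\beta ,\gamma })$ is needed and where Vitali's theorem replaces dominated convergence. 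Everything else is the routine $(S_{+})$-machinery for the $p(x)$-Laplacian.
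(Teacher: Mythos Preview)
Your proposal is correct and reaches the same conclusion, but the route differs from the paper's in two places worth noting.

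For compactness, the paper factors $\mathcal{\tilde{T}}=(\mathcal{L}_{1}^{-1}\circ\Phi_{1},\mathcal{L}_{2}^{-1}\circ\Phi_{2})$ with $\mathcal{L}_{i}=-\Delta_{p_{i}(x)}$ and $\Phi_{i}(z_{1},z_{2})=f_{i}(\cdot,z_{1},z_{2},\nabla z_{1},\nabla z_{2})$; it then observes that $\Phi_{i}(\mathcal{\tilde{K}}_{\tilde{L}})$ is bounded in $L^{p_{i}^{\prime}(x)}(\Omega)$, hence relatively compact in $W^{-1,p_{i}^{\prime}(x)}(\Omega)$ by the compact embedding $L^{p_{i}^{\prime}(x)}(\Omega)\hookrightarrow W^{-1,p_{i}^{\prime}(x)}(\Omega)$, and composes with the continuous inverse $\mathcal{L}_{i}^{-1}$ (citing \cite{KWZ}). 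Your direct $(S_{+})$-argument achieves the same without invoking that external continuity result, at the price of spelling out the testing step.

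For continuity, the paper does \emph{not} use Vitali. Instead, it exploits the \emph{strong} convergence $\nabla z_{i,n}\to\nabla z_{i}$ in $L^{p_{i}(x)}$ to extract a subsequence with $|\nabla z_{i,n_{k}}-\nabla z_{i}|^{p_{i}(x)}\leq g_{i}\in L^{1}(\Omega)$, and from this builds an explicit $L^{p_{i}^{\prime}(x)}$-dominant $G_{i}$ for $f_{i}(\cdot,z_{1,n_{k}},z_{2,n_{k}},\nabla z_{1,n_{k}},\nabla z_{2,n_{k}})$, combining the barrier $\underline{u}_{i}\geq c_{0}C^{-1}d(x)$ and (\ref{lazer-mackenna}) for the singular part with the pointwise bound $|\nabla z_{i,n_{k}}|\leq g_{i}^{1/p_{i}(x)}+|\nabla z_{i}|$ for the convection part; generalized dominated convergence then yields $f_{i}\to f_{i}$ strongly in $L^{p_{i}^{\prime}(x)}(\Omega)$. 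Your route via uniform $L^{p_{i}^{\prime}(x)}\cap L^{N}$ bounds plus a.e.\ convergence, leading to strong convergence in $W^{-1,p_{i}^{\prime}(x)}(\Omega)$ through the compact embedding, is equally valid but gives a weaker mode of convergence of the data; you then recover strong convergence of the solutions by a second $(S_{+})$-step, which the paper avoids by appealing to the continuity of $\mathcal{L}_{i}^{-1}$. Both arguments correctly handle the singularity via the lower barrier in $\mathcal{\tilde{K}}_{\tilde{L}}$ and the integrability granted by $(\mathrm{\tilde{H}}_{\alpha,\beta,\gamma})$.
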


\begin{proof}
Let $(z_{1,n},z_{2,n})\rightarrow (z_{1},z_{2})$ in $W_{0}^{1,p_{1}(x)}(%
\Omega )\times W_{0}^{1,p_{2}(x)}(\Omega )$, that is, 
\begin{equation*}
(z_{1,n},z_{2,n})\rightarrow (z_{1},z_{2})\text{ in }L^{p_{1}(x)}(\Omega
)\times L^{p_{2}(x)}(\Omega )
\end{equation*}%
and 
\begin{equation*}
(\nabla z_{1,n},\nabla z_{2,n})\rightarrow (\nabla z_{1},\nabla z_{2})\text{
in }\left( L^{p_{1}(x)}(\Omega )\right) ^{N}\times \left(
L^{p_{2}(x)}(\Omega )\right) ^{N},
\end{equation*}%
with $(z_{1,n},z_{2,n})\in \mathcal{\tilde{K}}_{\tilde{L}}$. According to 
\cite[Theorem 2.4]{FZ}, the whole sequences $(z_{1,n},z_{2,n})$ and $(\nabla
z_{1,n},\nabla z_{2,n})$ converge in measure to $(z_{1},z_{2})$ and $(\nabla
z_{1},\nabla z_{2}),$ respectively. Consequently, given that the function $%
f_{i}$ is of Carath\'{e}odory type, one can write 
\begin{equation*}
\begin{array}{l}
f_{i}\left( x,z_{1,n}(x),z_{2,n}(x),\nabla z_{1,n}(x),\nabla
z_{2,n}(x)\right) \\ 
\text{ \ \ \ \ \ \ \ \ \ \ \ \ \ \ }\longrightarrow f_{i}\left(
x,z_{1}(x),z_{2}(x),\nabla z_{1}(x),\nabla z_{2}(x)\right) \text{ \ for a.e. 
}x\in \Omega .%
\end{array}%
\end{equation*}%
Again, \cite[Theorem 2.4]{FZ} ensures that $(\nabla z_{1,n},\nabla z_{2,n})$
converges to $(\nabla z_{1},\nabla z_{2})$ in modular, that is 
\begin{equation*}
\underset{n\rightarrow \infty }{\lim }\rho _{p_{i}(x)}\left( \nabla
z_{i,n}-\nabla z_{i}\right) =0,
\end{equation*}%
or equivalently 
\begin{equation*}
(\left\vert \nabla z_{1,n}-\nabla z_{1}\right\vert ^{p_{1}(x)},\left\vert
\nabla z_{2,n}-\nabla z_{2}\right\vert ^{p_{2}(x)})\rightarrow 0\text{ \ in }%
L^{1}(\Omega )\times L^{1}(\Omega ).
\end{equation*}%
Then, there exists a subsequence $(\left\vert \nabla z_{1,n_{k}}-\nabla
z_{1}\right\vert ^{p_{1}(x)},\left\vert \nabla z_{2,n_{k}}-\nabla
z_{1}\right\vert ^{p_{2}(x)})$ and positive measurable functions $%
(g_{1},g_{2})\in L^{1}(\Omega )\times L^{1}(\Omega )$ such that 
\begin{equation}
\left\{ 
\begin{array}{l}
\left\vert \nabla z_{1,n_{k}}(x)-\nabla z_{1}(x)\right\vert ^{p_{1}(x)}\leq
g_{1}(x) \\ 
\left\vert \nabla z_{2,n_{k}}(x)-\nabla z_{2}(x)\right\vert ^{p_{2}(x)}\leq
g_{2}(x)%
\end{array}%
\right. \text{ for a.e. }x\in \Omega .  \label{51}
\end{equation}%
Here, it is worth noting that $z_{i}\in \lbrack \underline{u}_{i},L]$ since $%
z_{i,n}\in \lbrack \underline{u}_{i},L]$ . Moreover, from Lemma \ref{L5} and
(\ref{sub-super}) it holds%
\begin{equation}
\begin{array}{ll}
z_{1,n}^{\alpha _{i}(x)}z_{2,n}^{\beta _{i}(x)} & \leq \left\{ 
\begin{array}{ll}
\underline{u}_{1}^{\alpha _{i}(x)}\underline{u}_{2}^{\beta _{i}(x)} & \text{%
if }\alpha _{i}^{+},\beta _{i}^{+}<0 \\ 
\underline{u}_{1}^{\alpha _{i}(x)}L^{\beta _{i}(x)} & \text{if }\alpha
_{i}^{+}<0<\beta _{i}^{-} \\ 
L^{\alpha _{i}(x)}\underline{u}_{2}^{\beta _{i}(x)} & \text{if }\beta
_{i}^{+}<0<\alpha _{i}^{-}.%
\end{array}%
\right. \\ 
& \leq M_{0}\left\{ 
\begin{array}{ll}
d(x)^{\alpha _{i}(x)+\beta _{i}(x)} & \text{if }\alpha _{i}^{+},\beta
_{i}^{+}<0 \\ 
d(x)^{\alpha _{i}(x)} & \text{if }\alpha _{i}^{+}<0<\beta _{i}^{-} \\ 
d(x)^{\beta _{i}(x)} & \text{if }\beta _{i}^{+}<0<\alpha _{i}^{-},%
\end{array}%
\right.%
\end{array}
\label{52}
\end{equation}%
for certain positive constant $M_{0}:=M_{0}(L,C,c_{0},\alpha _{i},\beta
_{i}) $.

Assume that $\alpha _{i}^{\pm }+\beta _{i}^{\pm }<0$. On account of $($%
\textrm{\textbf{H}}$_{f})$ and (\ref{52}), we deduce from (\ref{51}) that 
\begin{equation*}
\begin{array}{l}
\left\vert f_{i}\left( x,z_{1,n},z_{2,n},\nabla z_{1,n},\nabla
z_{2,n}\right) \right\vert \leq M_{i}(z_{1,n}^{\alpha _{i}(x)}z_{2,n}^{\beta
_{i}(x)}+\left\vert \nabla z_{1,n}\right\vert ^{\gamma _{i}(x)}+\left\vert
\nabla z_{2,n}\right\vert ^{\bar{\gamma}_{i}(x)}) \\ 
\leq M_{i}(K(x)+\left\{ g_{1}(x)^{\frac{1}{p_{1}(x)}}+|\nabla z_{1}|\right\}
^{\gamma _{i}(x)}+\left\{ g_{2}(x)^{\frac{1}{p_{2}(x)}}+|\nabla
z_{2}|\right\} ^{\overline{\gamma }_{i}(x)}).%
\end{array}%
\end{equation*}%
where 
\begin{equation*}
K(x)=\left\{ \begin{aligned} &(\tilde{c}d(x))^{\alpha _{i}(x)+\beta
_{i}(x)}&&\mbox{if }\alpha _{i}^{+},\beta _{i}^{+}<0\\
&(\tilde{c}d(x))^{\alpha
_{i}(x)}\left\|z_{2,n}^{\beta_i(x)}\right\|_{\infty}&&\mbox{if }\alpha
_{i}^{+}<0<\beta _{i}^{-}\\
&\left\|z_{1,n}^{\alpha_i(x)}\right\|_{\infty}(\tilde{c}d(x))^{\beta
_{i}(x)}&&\mbox{if }\beta_{i}^{+}<0<\alpha_{i}^{-}. \end{aligned}\right.
\end{equation*}%
Setting 
\begin{equation*}
\begin{array}{l}
G_{i}(x)=M_{i}(K(x)+\left\{ g_{1}(x)^{\frac{1}{p_{1}(x)}}+|\nabla
z_{1}|\right\} ^{\gamma _{i}(x)}+\left\{ g_{2}(x)^{\frac{1}{p_{2}(x)}%
}+|\nabla z_{2}|\right\} ^{\overline{\gamma }_{i}(x)}),%
\end{array}%
\end{equation*}%
we claim that $G_{i}\in L^{p_{i}^{\prime }(x)}(\Omega )$. Indeed, since $%
g_{1},g_{2}\in L^{1}(\Omega ),$ it is readily seen that
\begin{equation}
\begin{array}{l}
\left\{ g_{1}(x)^{\frac{1}{p_{1}(x)}}+|\nabla z_{1}|\right\} ^{\gamma
_{i}(x)}\in L^{\frac{p_{1}(x)}{\gamma _{i}(x)}}(\Omega )%
\end{array}
\label{56}
\end{equation}%
and%
\begin{equation}
\begin{array}{l}
\left\{ g_{2}(x)^{\frac{1}{p_{2}(x)}}+|\nabla z_{2}|\right\} ^{\overline{%
\gamma }_{i}(x)}\in L^{\frac{p_{2}(x)}{\overline{\gamma }_{i}(x)}}(\Omega ).%
\end{array}
\label{57}
\end{equation}%
Assumption $($\textrm{\~{H}}$_{\alpha ,\beta ,\gamma })$ ensures that the
embeddings $L^{\frac{p_{1}(x)}{\gamma _{i}(x)}}(\Omega )\hookrightarrow
L^{p_{i}^{\prime }(x)}(\Omega )$ and $L^{\frac{p_{2}(x)}{\overline{\gamma }%
_{i}(x)}}(\Omega )\hookrightarrow L^{p_{i}^{\prime }(x)}(\Omega )$ hold true
while $($\textrm{\~{H}}$_{\alpha ,\beta ,\gamma })$ together with an
argument similar to (\ref{lazer-mackenna}) guarantee that 
\begin{equation}
\int_{\Omega }K(x)^{p_{i}^{\prime }(x)}dx<\infty .  \label{58}
\end{equation}%
Then, gathering (\ref{56})-(\ref{58}) together we conclude that $G_{i}(x)\in
L^{p_{i}^{\prime }(x)}(\Omega ),$ showing the claim.

The generalized Lebesgue's dominated convergence theorem (see \cite[Lemma
3.2.8]{DHHR} ) implies that 
\begin{equation*}
f_{i}\left( x,z_{1,n_{k}},z_{2,n_{k}},\nabla z_{1,n_{k}},\nabla
z_{2,n_{k}}\right) \rightarrow f_{i}\left( x,z_{1},z_{2},\nabla z_{1},\nabla
z_{2}\right) \text{ \ in }L^{p_{i}^{\prime }(x)}(\Omega ).
\end{equation*}%
The convergence principle implies that the entire sequence $f_{i}\left(
x,z_{1,n},z_{2,n},\nabla z_{1,n},\nabla z_{2,n}\right) $ converges to $%
f_{i}\left( x,z_{1},z_{2},\nabla z_{1},\nabla z_{2}\right) $ in $%
L^{p_{i}^{\prime }(x)}(\Omega )\hookrightarrow W^{-1,p_{i}^{\prime
}(x)}(\Omega ),$ showing the continuity of $\mathcal{\tilde{T}}$.

Furthermore, it is worth noting that the operator $\mathcal{\tilde{T}}$ can
be written as 
\begin{equation*}
\mathcal{\tilde{T}}:=\left( \mathcal{L}_{1}^{-1}\circ \Phi _{1},\text{ }%
\mathcal{L}_{2}^{-1}\circ \Phi _{2}\right) ,
\end{equation*}%
where $\mathcal{L}_{i}=-\Delta _{p_{i}(x)}$ and $\Phi
_{i}(z_{1},z_{2})=f_{i}\left( x,z_{1},z_{2},\nabla z_{1},\nabla z_{2}\right) 
$ for all $(z_{1},z_{2})\in \tilde{\mathcal{K}}_{\tilde{L}}$. Thus, the
compactness of the embedding $L^{p_{i}^{\prime }(x)}(\Omega )\hookrightarrow
W^{-1,p_{i}^{\prime }(x)}(\Omega )$ implies that $\Phi _{i}\left( \mathcal{K}%
_{\Tilde{R}}\right) $ is a relatively compact subset of $W^{-1,p_{i}^{\prime
}(x)}(\Omega )$, hence the compactness of $\Phi _{i}$. Therefore, the
boundedness of $\mathcal{L}_{i}^{-1}$ (see \cite[Theorem 3.2]{KWZ}), leads
to the compactness of $\mathcal{\tilde{T}}$. The proof is completed.
\end{proof}

\subsection{Proof of Theorem \protect\ref{T1}}

By virtue of Propositions \ref{P1} and \ref{P3} (resp. Propositions \ref{P4}
and \ref{P5}), we are in a position to apply Schauder's fixed point theorem
(see, e.g., \cite{Z}) to the set $\mathcal{K}_{C}$ (resp. $\mathcal{\tilde{K}%
}_{\tilde{L}}$) and the map $\mathcal{T}:\mathcal{K}_{C}\rightarrow \mathcal{%
K}_{C}$ (resp. $\mathcal{\tilde{T}}:\mathcal{\tilde{K}}_{\tilde{L}%
}\rightarrow \mathcal{\tilde{K}}_{\tilde{L}}$). This ensures the existence
of $(u_{1},u_{2})\in \mathcal{K}_{C}$ (resp. $(u_{1},u_{2})\in \mathcal{%
\tilde{K}}_{\tilde{L}}$ satisfying $(u_{1},u_{2})=\mathcal{T}(u_{1},u_{2})$
(resp. $(u_{1},u_{2})=\mathcal{\tilde{T}}(u_{1},u_{2})$). Taking into
account the definition of $\mathcal{T}$ (resp. $\mathcal{\tilde{T}}$), it
turns out that $(u_{1},u_{2})\in C^{1,\tau }(\overline{\Omega })\times
C^{1,\tau }(\overline{\Omega })$ for certain $\tau \in (0,1)$ (resp. $%
(u_{1},u_{2})\in (W_{0}^{1,p_{1}(x)}(\Omega )\cap L^{\infty }(\Omega
))\times (W_{0}^{1,p_{2}(x)}(\Omega )\cap L^{\infty }(\Omega ))$) is a
(positive) solution of problem $($\textrm{P}$)$. Moreover, because the
solution $(u_{1},u_{2})$ lies in $\mathcal{K}_{C}$ (resp. $\mathcal{\tilde{K}%
}_{\tilde{L}}$), Lemma \ref{L5} implies that (\ref{44}) is fulfilled. This
completes the proof.

\section{Appendix}

\label{S6}

Denoting $B_{1}$ the unit ball and $S_{1}$ the unit sphere of $\mathbb{R}^{N}$, let $\Psi_{\epsilon }, \Pi_{\epsilon}:\mathbb{%
\mathbb{R}
}^{N}\rightarrow \mathbb{%
\mathbb{R}
}$ be sequences of mollifiers defined for $\epsilon >0$ by 
\begin{equation}
\Psi _{\epsilon }(x)=\frac{1}{\epsilon ^{N}}\Psi \left( \frac{x}{\epsilon }%
\right) \,\,\mbox{ and }\,\,\Pi _{\epsilon }(x)=\frac{1}{\epsilon ^{N}}\Pi \left( \frac{x}{\epsilon }%
\right),  \label{A1}
\end{equation}%
with $\Psi:B_{1}\rightarrow\mathbb{R}$ a bump function satisfying $\Psi(x)=C^{-1}e^{-\frac{1}{1-|x|^2}}$, $C=\int_{B_{1}}e^{-\frac{1}{1-|x|^2}}dx$, and $\Pi=\overline{C}^{-1}\pi$, where $\pi\in C_0^{1}(B_{1})$ is the solution of the problem
\begin{equation*}
    \left\{\begin{aligned}
     &-\Delta\pi=1 &&\mbox{ in }B_{1}\\
     &\pi=0 &&\mbox{ on }S_{1},
    \end{aligned}\right.
\end{equation*}
and $\overline{C}=\int_{B_{1}}\pi(x)dx$. Both the functions $\Psi$ and $\Pi$ satisfy 
\begin{equation}
\int_{\mathbb{\mathbb{R}}^{N}}\Psi (x)dx=\int_{\mathbb{\mathbb{R}}^{N}}\Pi(x)dx=1.\label{A2}
\end{equation}%
The Mean value theorem is stated as follows.

\begin{theorem}
\label{MVT2}Let $u\in W_{0}^{1,p(x)}(\Omega )$ be the solution of a
nonlinear elliptic equation of the form%
\begin{equation}
-\Delta _{p(x)}u=h(x)\text{ in }\Omega ,\text{ }u=0\text{ on }\partial
\Omega ,  \label{TVM}
\end{equation}%
where $h$ is a sign-constant function. Let $f:\Omega\rightarrow 
\mathbb{R}$ be a Lipschitz continuous function satisfying $-\infty< m\le
f(x)\le M<\infty $ for some constants $m, M$. Then, for any
sign-constant function $\phi \in W_{0}^{1,p(x)}(\Omega ),$ there exists a real 
$\gamma \in [m,M]$, depending on $\phi $, such that 
\begin{equation}
\int_{\Omega }f(x)|\nabla u|^{p(x)-2}\nabla u\nabla \phi dx=\gamma
\int_{\Omega }h(x)\phi dx.  \label{A8}
\end{equation}
\end{theorem}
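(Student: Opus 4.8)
The plan is to reduce the statement to a one-dimensional intermediate value argument by introducing a single real parameter. Define, for $t\in[m,M]$, the function
\begin{equation*}
\Phi(t)=\int_{\Omega}\big(f(x)-t\big)\,|\nabla u|^{p(x)-2}\nabla u\,\nabla\phi\;dx.
\end{equation*}
This is an affine (hence continuous) function of $t$, since the integral $\int_{\Omega}|\nabla u|^{p(x)-2}\nabla u\,\nabla\phi\;dx$ is a fixed finite number: indeed $u\in W_0^{1,p(x)}(\Omega)$ so $|\nabla u|^{p(x)-2}\nabla u\in L^{p'(x)}(\Omega)^N$ by the standard conjugate-exponent duality for variable Lebesgue spaces, and $\nabla\phi\in L^{p(x)}(\Omega)^N$, so the pairing is well defined. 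The goal is to locate a zero of $\Phi$ in $[m,M]$, because $\Phi(\gamma)=0$ is exactly the identity \eqref{A8} once we use the weak formulation of \eqref{TVM}, namely $\int_{\Omega}|\nabla u|^{p(x)-2}\nabla u\,\nabla\phi\;dx=\int_{\Omega}h(x)\phi\;dx$.

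First I would test \eqref{TVM} with $\phi$ itself (legitimate since $\phi\in W_0^{1,p(x)}(\Omega)$) to record that
\begin{equation*}
I:=\int_{\Omega}|\nabla u|^{p(x)-2}\nabla u\,\nabla\phi\;dx=\int_{\Omega}h(x)\phi\;dx,
\end{equation*}
and note that, because $h$ and $\phi$ are both sign-constant, $I$ has a definite sign (say $I\ge 0$; the case $I\le 0$ is symmetric, and if $I=0$ then the claim holds trivially for any $\gamma$, in particular $\gamma=m$). Next I would evaluate $\Phi$ at the two endpoints. Using $m\le f(x)\le M$ together with the sign of the "density" $|\nabla u|^{p(x)-2}\nabla u\,\nabla\phi$ — which is not pointwise sign-constant in general, so one must be a little careful here — we get
\begin{equation*}
\Phi(M)=\int_{\Omega}\big(f(x)-M\big)|\nabla u|^{p(x)-2}\nabla u\,\nabla\phi\;dx,\qquad
\Phi(m)=\int_{\Omega}\big(f(x)-m\big)|\nabla u|^{p(x)-2}\nabla u\,\nabla\phi\;dx.
\end{equation*}
The clean way to get the endpoint signs is to go back to \eqref{TVM} and exploit that $(f-M)\le 0\le(f-m)$ as functions, then test the equation for $u$ against the nonnegative weight. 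Concretely, since $f$ is Lipschitz and bounded, $(M-f)\phi$ and $(f-m)\phi$ are admissible test functions in $W_0^{1,p(x)}(\Omega)$ (product of a Lipschitz function with a Sobolev function stays in the space), which yields $\Phi(M)=I\,M-\int_\Omega f(x)|\nabla u|^{p(x)-2}\nabla u\nabla\phi\,dx$ — wait, more directly: testing \eqref{TVM} against $(M-f)\phi$ and against $(f-m)\phi$ is the move that converts the left-hand gradient integrals into $\int_\Omega h(x)(M-f)\phi\,dx\ge 0$ and $\int_\Omega h(x)(f-m)\phi\,dx\ge 0$ respectively, giving $\Phi(M)\le 0\le\Phi(m)$ directly in terms of the already-understood right-hand sides.

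Then the conclusion follows from the intermediate value theorem applied to the continuous function $\Phi$ on $[m,M]$: there exists $\gamma\in[m,M]$ with $\Phi(\gamma)=0$, which is precisely \eqref{A8}. The main obstacle I anticipate is the sign bookkeeping at the endpoints: the integrand $f(x)|\nabla u|^{p(x)-2}\nabla u\,\nabla\phi$ is not of one sign pointwise, so one cannot naively bound $\Phi(M)$ and $\Phi(m)$ by monotonicity of the integrand; the resolution is the substitution trick above — reinterpret $\Phi(M)$ and $\Phi(m)$ via the weak form of the PDE so that the sign-constancy of $h$ and $\phi$ does the work. A secondary technical point is verifying that $(M-f)\phi,(f-m)\phi\in W_0^{1,p(x)}(\Omega)$, which holds because $f$ is Lipschitz (so $f\in W^{1,\infty}(\Omega)$) and multiplication by a bounded Lipschitz function is continuous on $W_0^{1,p(x)}(\Omega)$, preserving the zero boundary trace. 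The mollifiers $\Psi_\epsilon,\Pi_\epsilon$ introduced before the statement suggest the authors may instead approximate $f$ by smooth functions $f_\epsilon=f*\Psi_\epsilon$ and pass to the limit, getting $\gamma_\epsilon\in[m,M]$ and extracting a convergent subsequence $\gamma_\epsilon\to\gamma$; this is an alternative if one wants to avoid discussing Lipschitz-times-Sobolev products, but the core mechanism — affine function of a scalar plus intermediate value theorem — is the same.
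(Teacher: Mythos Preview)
Your reduction to an intermediate value problem for the affine map $\Phi(t)=\int_\Omega (f(x)-t)\,|\nabla u|^{p(x)-2}\nabla u\cdot\nabla\phi\,dx$ is natural, and the identity $I=\int_\Omega h\phi$ is of course correct. The gap is in the endpoint step. When you test the equation against $(M-f)\phi$, the left-hand side is
\[
\int_\Omega |\nabla u|^{p(x)-2}\nabla u\cdot\nabla\big[(M-f)\phi\big]\,dx
=\int_\Omega (M-f)\,|\nabla u|^{p(x)-2}\nabla u\cdot\nabla\phi\,dx
-\int_\Omega \phi\,|\nabla u|^{p(x)-2}\nabla u\cdot\nabla f\,dx,
\]
that is, $-\Phi(M)-J$ with $J:=\int_\Omega \phi\,|\nabla u|^{p(x)-2}\nabla u\cdot\nabla f\,dx$. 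The same computation with $(f-m)\phi$ produces $\Phi(m)+J$. So what you actually obtain is $\Phi(M)\le -J\le \Phi(m)$, not $\Phi(M)\le 0\le\Phi(m)$. Since $J$ has no a~priori sign (neither $\nabla f$ nor the integrand $|\nabla u|^{p(x)-2}\nabla u\cdot\nabla\phi$ is sign-definite), you cannot conclude that $\Phi$ vanishes on $[m,M]$. The mollifier alternative you sketch does not help here: smoothing $f$ leaves the cross term $\int_\Omega\phi\,|\nabla u|^{p(x)-2}\nabla u\cdot\nabla f_\epsilon\,dx$ intact.

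This is precisely the obstruction the paper's argument is built to avoid. Rather than multiply $\phi$ by $f$ (and thereby pick up $\nabla f$), the paper slices $\Omega$ along level sets $\Omega(y)=\{f\le y\}$ and constructs, via an auxiliary singular elliptic problem, nonnegative test functions $\Phi_{\epsilon,y}\in W_0^{1,p(x)}(\Omega)$ with $\nabla\Phi_{\epsilon,y}=a_{\epsilon,\Omega(y)}\nabla\phi_\epsilon$ almost everywhere, where $a_{\epsilon,\Omega(y)}$ is a mollified indicator of $\Omega(y)$. Testing the PDE with these $\Phi_{\epsilon,y}$ yields $\int_\Omega a_{\epsilon,\Omega(y)}|\nabla u|^{p(x)-2}\nabla u\cdot\nabla\phi_\epsilon\,dx=\int_\Omega h\,\Phi_{\epsilon,y}\,dx\ge 0$ with no $\nabla f$ contamination, and a Stieltjes-type integration over $y\in[m,M]$ then delivers the bounds $mI\le\int_\Omega f|\nabla u|^{p(x)-2}\nabla u\cdot\nabla\phi\,dx\le MI$. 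The hard work---and the reason for the mollifiers $\Psi_\epsilon,\Pi_\epsilon$ set up before the statement---is exactly producing a function whose gradient localizes to a sublevel set of $f$ without differentiating $f$.
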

\begin{remark}
Actually, equality (\ref{A8}) shall be proved for any nonnegative function $\phi\in C_0^{\infty}(\Omega)$, and the generalization to $W_0^{1,p(x)}(\Omega)$ is deduced by a density argument. Also, without loss of
generality, we will assume that $h$ is nonnegative, because otherwise, instead of (\ref{TVM}), we consider the problem 
\begin{equation*}
-\Delta _{p(x)}u=-h(x)\text{ in }\Omega ,\text{ \ }u=0\text{ on }\partial
\Omega ,
\end{equation*}%
whose solution is $\hat{u}=-u$. Similarly, we will assume that $\phi$ is nonnegative.
\end{remark}
\renewcommand*{\proofname}{Proof of Theorem \ref{MVT2}}
\begin{proof}Inspired by \cite{B}, the proof of (\ref{A8}) is divided in four steps:

$\bullet $ \textbf{Step 1: }For any functions $f,g:\mathbb{R}^N\rightarrow\mathbb{R}$, benote by $\star$ the classical convolution product defined by
\begin{align*}
    f\star g(x)=\int_{\mathbb{R}^N}f(x-z)g(z)dz,\,\,\mbox{for any }x\in\Omega,
\end{align*} and set
\begin{equation}
\begin{array}{l}
\phi_{\epsilon}=\phi\star\Pi_{\epsilon}\\
\Omega(y)=\left\{ x\in \Omega \,|\,f(x)\leq y\right\} ,\,\,y\in \left[m,M\right]\\
\Omega_{\epsilon}=\left\{ x\in \Omega \,|\,d(x,\partial\Omega)\geq \epsilon\right\}\\
a_{\epsilon,\Omega(y)}=\mathbbm{1}_{\Omega(y)\cap\Omega_{\epsilon}}\star\Psi_{\epsilon}.
\end{array}\label{A6}
\end{equation}%
For $\epsilon >0,$ let $\breve{F}_{\epsilon },F_{\epsilon }:[m,M]\rightarrow 
\mathbb{R}$ be the functionals defined for any nonnegative function $\phi
\in C_0^{\infty}(\Omega)$ by 
\begin{equation*}
\begin{array}{l}
\breve{F}_{\epsilon }(y)=\int_{\Omega}a_{\epsilon,\Omega(y)}(x)\,f(x)|\nabla u|^{p(x)-2}\nabla u\nabla
\phi_{\epsilon} dx, \\ 
F_{\epsilon }(y)=\int_{\Omega }a_{\epsilon,\Omega(y)}(x)\,|\nabla u|^{p(x)-2}\nabla u\nabla\phi_{\epsilon} dx,%
\end{array}
\end{equation*}%
where $u$ refers to the solution of problem (\ref{TVM}). We claim that 
\begin{equation}
\underset{\epsilon \rightarrow 0}{\lim }\left\vert \int_{m}^{M}d\breve{F}%
_{\epsilon }(y)-\int_{m}^{M}ydF_{\epsilon }(y)\right\vert =0.  \label{A10}
\end{equation}%
Indeed, let $P_{n}=\left\{ m=y_{0}<...<y_{n}=M\right\} $ be a partition of
the interval $\left[ m,M\right] $ such that $\mathcal{N}=y_{k}-y_{k-1}=%
\delta <\eta ,$ and let $y_{k}^{\prime }\in \lbrack y_{k-1},y_{k}]$. We
check that 
\begin{equation}
\begin{array}{l}
\left\vert \sum_{k=1}^{n}\left[ \breve{F}_{\epsilon }(y_{k})-\breve{F}%
_{\epsilon }(y_{k-1})\right] -\sum_{k=1}^{n}y_{k}^{\prime }\left[
F_{\epsilon }(y_{k})-F_{\epsilon }(y_{k-1})\right] \right\vert \\ 
=\left\vert \sum_{k=1}^{n}\int_{\Omega }(f(x)-y_{k}^{\prime})a_{\epsilon,\Omega_k}(x)|\nabla u|^{
p(x)-2}\nabla u\nabla\phi_{\epsilon} dx\right\vert .%
\end{array}
\label{A11}
\end{equation}%
with $\Omega_{k}=\{x\in\Omega\,|\,y_{k-1}<f(x)\le
y_k\}=\Omega(y_k)\backslash\Omega(y_{k-1})$ and $a_{\epsilon,\Omega_k}=%
a_{\epsilon,\Omega(y_{k})}-a_{\epsilon,\Omega(y_{k-1})}$. For any $x\in \Omega_{k}\cap \Omega_{\epsilon}+B_{\epsilon }(0) $, there exists $(y,s)\in
\Omega_{k}\cap \Omega_{\epsilon}\times B_{\epsilon }(0)$, such that $x=y+s$. From the Mean Value
theorem, and the $C$-Lipschitz regularity of $f$,
we infer that 
\begin{equation}
\begin{array}{l}
|f(x)-y_{k}^{\prime }|\le\left\vert f(y+s)-f(y)\right\vert +\left\vert
f(y)-y_{k}^{\prime }\right\vert\leq C|s|+|y_{k}-y_{k-1}|\leq C\epsilon
+\eta.  \end{array}\label{A12}
\end{equation}%
Since $a_{\epsilon,\Omega(y_{0})}=a_{\epsilon,\Omega(m)}=0$, and $%
a_{\epsilon,\Omega(y_{n})}=a_{\epsilon,\Omega(M)}=a_{\epsilon,\Omega}$%
, it holds 
\begin{equation}
\begin{array}{l}
\sum_{k=1}^{n}a_{\epsilon,\Omega_k}(x)=a_{\epsilon,\Omega}(x).%
\end{array}
\label{A13}
\end{equation}%
Gathering (\ref{A11})-(\ref{A13}) together leads to 
\begin{equation}
\begin{array}{l}
\left\vert \sum_{k=1}^{n}\left[ \breve{F}_{\epsilon }(y_{k})-\breve{F}%
_{\epsilon }(y_{k-1})\right] -\sum_{k=1}^{n}y_{k}^{\prime }\left[
F_{\epsilon }(y_{k})-F_{\epsilon }(y_{k-1})\right] \right\vert \\ 
\leq \left(C\epsilon +\eta \right)\int_{\Omega }a_{\epsilon,\Omega
}|\nabla
u|^{p(x)-1}\left\vert \nabla \phi_{\epsilon} \right\vert dx.
\end{array}
\label{A16}
\end{equation}%
Observe that 
\begin{equation}
\underset{\epsilon \rightarrow 0}{\lim }\,a_{\epsilon,\Omega(y)
}=\mathbbm{1}_{\Omega(y)
},\,\forall y\in[m,M],\,\,\mbox{ and }\,\,\underset{\epsilon \rightarrow 0}{\lim }\,\nabla \phi_{\epsilon}=\nabla \phi\,\,\mbox{ a.e. in }\Omega, \label{A18}
\end{equation}%
see for example \cite[Theorem 7, page 714]{Evans}. Thus, (\ref{A16})-(\ref{A18}) together imply 
\begin{equation}
\begin{array}{l}
\underset{\epsilon \rightarrow 0}{\lim }\,\underset{\eta \rightarrow 0}{\lim 
}\,\left\vert \sum_{k=1}^{n}\left[ \breve{F}_{\epsilon }(y_{k})-\breve{F}%
_{\epsilon }(y_{k-1})\right] -\sum_{k=1}^{n}y_{k}^{\prime }\left[
F_{\epsilon }(y_{k})-F_{\epsilon }(y_{k-1})\right] \right\vert =0.%
\end{array}
\label{A19}
\end{equation}%
This achieves the proof of (\ref{A10}).

$\bullet $ \textbf{Step 2: }After integrating by parts, we may write that 
\begin{equation}
\begin{array}{l}
\int_{m}^{M}ydF_{\epsilon }(y)=\left[ MF_{\epsilon }(M)-mF_{\epsilon }(m)\right] -\int_{m}^{M}F_{\epsilon
}(y)dy,%
\end{array}
\label{A20}
\end{equation}%
which implies, according to (\ref{A10}), 
\begin{equation}
\begin{array}{l}
\underset{\epsilon \rightarrow 0}{\lim }\left\vert \left( \breve{F}%
_{\epsilon }(M)-\breve{F}_{\epsilon }(m)\right) -\left( MF_{\epsilon
}(M)-mF_{\epsilon }(m)-\int_{m}^{M}F_{\epsilon }(y)dy\right) \right\vert =0.%
\end{array}
\label{A21}
\end{equation}%
By definition of $F_{\epsilon }$ and $\breve{F}_{\epsilon }$, (\ref{A21})
becomes 
\begin{equation}
\begin{array}{l}
\underset{\epsilon \rightarrow 0}{\lim }\left\vert \int_{\Omega }a_{\epsilon,\Omega}(x)\,f(x)|\nabla
u|^{p(x)-2}\nabla u\nabla\phi_{\epsilon} dx\right. \\ 
-M\,\int_{\Omega }a_{\epsilon,\Omega}(x)\,|\nabla u|^{p(x)-2}\nabla u\nabla \phi_{\epsilon} dx \\ 
\left. +\int_{m}^{M}\left[ \int_{\Omega }a_{\epsilon,\Omega(y)}(x)\,|\nabla u|^{p(x)-2}\nabla u\nabla
\phi_{\epsilon} dx\right] dy\right\vert =0.%
\end{array}
\label{A22}
\end{equation}%
By the way, $a_{\epsilon,\Omega(y)}=a_{\epsilon,\Omega}-a_{\epsilon,\Omega\backslash\Omega(y)}$, so from (%
\ref{A22}) we obtain 
\begin{equation}
\begin{array}{l}
\underset{\epsilon \rightarrow 0}{\lim }\left\vert \int_{\Omega }a_{\epsilon,\Omega}(x)\,f(x)|\nabla
u|^{p(x)-2}\nabla u\nabla \phi_{\epsilon} dx\right. \\ 
-m\,\int_{\Omega }a_{\epsilon,\Omega}(x)\,|\nabla u|^{p(x)-2}\nabla u\nabla \phi_{\epsilon} dx \\ 
\left. -\int_{m}^{M}\left[ \int_{\Omega }a_{\epsilon,\Omega\backslash\Omega(y)}(x)\,|\nabla u|^{p(x)-2}\nabla u\nabla
\phi_{\epsilon} dx\right] dy\right\vert =0.%
\end{array}
\label{A23}
\end{equation}%
$\bullet $ \textbf{Step 3: } Set $A(y)=\Omega(y)$ (resp. $A(y)=\Omega\backslash\Omega(y)$). Assume that $\phi\ge0$ is constant in $A(y)$. Then from the Dominated Convergence theorem and (\ref{A18}) we get
\begin{equation}\label{convergence}
    \begin{array}{l}
         \underset{\epsilon \rightarrow 0}{\lim }\int_{m}^{M}\left[ \int_{\Omega }a_{\epsilon,A(y)}(x)\,|\nabla u|^{p(x)-2}\nabla u\nabla
\phi_{\epsilon} dx\right] dy\\=\int_{m}^{M}\left[ \int_{\Omega }\mathbbm{1}_{A(y)}(x)\,|\nabla u|^{p(x)-2}\nabla u\nabla
\phi dx\right] dy=0.
    \end{array}
\end{equation}
Now suppose that $\phi$ is not constant in $A(y)$. For $(\epsilon,y,\phi)\in\mathbb{R}_{+}^{\star}\times[m,M]\times (C_0^{\infty}(\Omega))_{+}$ fixed, we claim that there exists a nonnegative function $\Phi_{\epsilon,y}\in W_0^{1,p(x)}(\Omega)$ such that 
\begin{align}\label{approx}
    \nabla\Phi_{\epsilon,y}=a_{\epsilon,A(y)}\nabla \phi_{\epsilon}\,\,\mbox{ a.e. in }\Omega.
\end{align}
 Indeed, consider the singular quasilinear elliptic problem
\begin{equation}\label{singular}
\left\{\begin{aligned}
&-\Delta v=-|a_{\epsilon,A(y)}\nabla\phi_{\epsilon}|^2v^{-1}-2div\left(a_{\epsilon,A(y)}\nabla\phi_{\epsilon}\right)&&\mbox{ in }\Omega\\
&v>0 &&\mbox{ in }\Omega\\
&v=0 &&\mbox{ on }\partial\Omega.
\end{aligned}\right.
\end{equation}%
Here, the divergence of the vector $a_{\epsilon,A(y)}\nabla\phi_{\epsilon}$ must be interpreted in the classical sense, which is possible since the convolution $\phi_{\epsilon}$ (resp. $a_{\epsilon,A(y)}$) involves the function $\phi$ (resp. $\Psi_{\epsilon}$) which is infinitely differentiable. We are going to prove the existence of at least one solution for (\ref{singular}) in the sense of distributions, i.e., there exists $v_{\epsilon}\in H_0^{1}(\Omega)$ such that 
\begin{equation*}
\begin{array}{l}
    \int_{\Omega}\nabla v_{\epsilon}\nabla\xi dx=-\int_{\Omega}|a_{\epsilon,A(y)}\nabla\phi_{\epsilon}|^2v_{\epsilon}^{-1}\xi dx-2\int_{\Omega}div\left(a_{\epsilon,A(y)}\nabla\phi_{\epsilon}\right)\xi dx
\end{array}
\end{equation*}
for every $\xi\in H_0^{1}(\Omega)$.\\
\textbf{Existence of a solution for (\ref{singular}): } Define
\begin{equation*}
\begin{array}{l}
a_{\epsilon}(x)=|a_{\epsilon,A(y)}\nabla\phi_{\epsilon}|\\
b_{\epsilon}(x)=div\left(a_{\epsilon,A(y)}\nabla\phi_{\epsilon}\right)\\
K_{\epsilon}=\left\{w\in C_0^1(\Omega):\underline{v}_{\epsilon}\le w\le \overline{v}_{\epsilon}\right\}\\
\underline{v}_{\epsilon}=\lambda\epsilon^{-1/2}\psi\\
\overline{v}_{\epsilon}= \max\{\lambda\epsilon^{-1/2}\|\psi\|_{\infty},\|b_{\epsilon}\|_{\infty}\|\chi\|_{\infty}\}\\
f_{\epsilon}(x,z)=-a_{\epsilon}^2(x)z^{-1}-2b_{\epsilon}(x),\,\,z\in K_{\epsilon}\\
\hat{\phi}(x)=\mathbbm{1}_{A(y)}(x)\underset{\epsilon\in(0,1)}{\min\,}\int_{B_1(0)}\phi(x-\epsilon y)dy
\end{array}
\end{equation*}
where $\psi,\chi\in C_0^{1}(\overline{\Omega})$ are respectively the solutions of the problems
\begin{equation}\label{a}
\left\{\begin{aligned}
&-\Delta\psi=\hat{\phi}&&\mbox{ in }\Omega\\
&\psi=0&&\mbox{ on }\partial\Omega
\end{aligned}\right.\,\,\mbox{ and }\,\,\left\{\begin{aligned}
&-\Delta\chi=1&&\mbox{ in }\Omega\\
     &\chi=0&&\mbox{ on }\partial\Omega,
\end{aligned}\right.
\end{equation}
see \cite[Proposition 2.1]{PS}. The fact that $\phi\ge0$ is not constant in $A(y)$ implies that $\hat{\phi}$ is a positive nontrivial function, hence the existence of $\lambda>0$ such that
\begin{equation}\label{SMP}
    d(x)\le \lambda\psi(x)\,\,\mbox{ for all }\,\,x\in\Omega,
\end{equation}
with $d(x):=d(x,\partial\Omega)$, see \cite[Theorems 1 and 2]{V}. Let $z\in K_{\epsilon}$, consider the auxiliary problem
\begin{align}\label{auxiliary}
    \left\{\begin{aligned}
    &-\Delta v=f_{\epsilon}(x,z)&&\mbox{ in }\Omega\\
&v=0&&\mbox{ on }\partial\Omega.
    \end{aligned}\right.
\end{align}
Following the ideas of \cite{PS}, we prove that $f_{\epsilon}\in L^{q}(\Omega)$. Indeed, due to the classical Mean Value theorem, the estimation $a_{\epsilon,A(y)}(x)\le\|\nabla a_{\epsilon,A(y)}\|_{\infty}d(x)$ with (\ref{SMP}) imply \begin{equation}\label{L^q}
\begin{array}{l}
     a_{\epsilon}^2z^{-1}\le a_{\epsilon}^2\underline{v}_{\epsilon}^{-1}= |a_{\epsilon,A(y)}\nabla\phi_{\epsilon}|^{2}(\lambda\epsilon^{-1/2}\psi)^{-1}\le\epsilon^{1/2}\|\nabla a_{\epsilon,A(y)}\|_{\infty}|\nabla\phi_{\epsilon}|^{2}\in L^{q}(\Omega),
\end{array}
\end{equation}
for any $q>N$. Consequently, $f_{\epsilon}\in L^{q}(\Omega)$, and from \cite[Proposition 2.1]{PS} there exists $\check{v}_{\epsilon}\equiv Az\in C_0^{1}(\Omega)$ satisfying (\ref{auxiliary}). Since the mapping $S:K_{\epsilon}\rightarrow L^{q}(\Omega)$, defined by $Sz=f_{\epsilon}(x,z)$, is continuous and bounded, it follows from \cite[Lemma 2.1]{Hai} that $A: K_{\epsilon}\rightarrow C_0^{1}(\Omega)$ is compact. It remains to prove that $A:K_{\epsilon}\rightarrow K_{\epsilon}$. Indeed, let $z\in K_{\epsilon}$
 and $\check{v}_{\epsilon}=Az$. Then 
     $-\Delta \check{v}_{\epsilon}\le\|b_{\epsilon}\|_{\infty}=-\Delta(\|b_{\epsilon}\|_{\infty}\chi)$, $\chi$ defined in (\ref{a}),
 which implies according to the weak comparison principle that 
     $\check{v}_{\epsilon}\le \|b_{\epsilon}\|_{\infty}\chi\le \overline{v}_{\epsilon}$. \\Now to prove that $\underline{v}_{\epsilon}\le\check{v}_{\epsilon}$, we proceed as in \cite{Hai}. Let $B_s=\{x\in\Omega:f_{\epsilon}(x,.)<\lambda s\hat{\phi}(x)\}$ ($s>0$) and let $v_{s,\epsilon}$ be the solution of the problem
 \begin{align*}
     -\Delta v= s^{-1}f_{s,\epsilon}(x,\underline{v}_{\epsilon})\equiv s^{-1}\left\{\begin{aligned}
     &f_{\epsilon}(x,\underline{v}_{\epsilon})\,\,&&\mbox{in }B_s\\
     &\lambda s\hat{\phi}(x)\,\,&&\mbox{in }\Omega\backslash B_s
     \end{aligned}\right., v=0\mbox{ on }\partial\Omega.
 \end{align*}
 Notice that 
     $-\Delta\check{v}_{\epsilon}=f_{\epsilon}(x,z)\ge f_{s,\epsilon}(x,\underline{v}_{\epsilon})=-\Delta\left(sv_{s,\epsilon}\right)$, and from the weak comparison principle we have $\check{v}_{\epsilon}\ge s v_{s,\epsilon}$. Also, setting $v_{0}=\lambda\psi$, $\psi$ defined in (\ref{a}), then $v_{0}$ satisfies $-\Delta v_{0}=\lambda\hat{\phi}$. From \cite[Lemma 2.1]{Hai}, and using (\ref{SMP}) and (\ref{L^q}), there exists $C>0$ independent of $v_{s,\epsilon}$ and $v_{0}$ such that
 \begin{equation}\label{B_s}
 \begin{array}{l}
     |v_{s,\epsilon}-v_{0}|_{C^1}\le C\left\| s^{-1}f_{s,\epsilon}-\lambda \hat{\phi}\right\|_{q}\\=C\left(\int_{B_s} \left|s^{-1}[-a_{\epsilon}^2(x)\underline{v}_{\epsilon}^{-1}-2b_{\epsilon}(x)]- \lambda \hat{\phi}(x)\right|^{q}dx\right)^{1/q}\\
     \le C|B_s|^{1/q}\left(s^{-1}\left[\epsilon^{1/2}\|\nabla a_{\epsilon,A(y)}\|_{\infty}\|\nabla\phi_{\epsilon}\|_{\infty}^{2}+2\|b_{\epsilon}\|_{\infty}\right]+\lambda\|\hat{\phi}\|_{\infty}\right)\\
     \le |B_s|^{1/q}M_{s,\epsilon},
 \end{array}
 \end{equation}
 where $$M_{s,\epsilon}=C(s^{-1}[\epsilon^{-1/2}\|\nabla\psi\|_{\infty}\|\nabla\phi\|_{\infty}^2+2\epsilon^{-1}\|\nabla\psi\|_{\infty}\|\nabla\phi\|_{\infty}+2\|\Delta\phi\|_{\infty}]+\lambda\|\phi\|_{\infty}|B_1|).$$
Also, from (\ref{SMP}), we check that
\begin{align}\label{lower_sol}
    \begin{array}{l}
         \check{v}_{\epsilon}\ge s v_{s,\epsilon}\ge \lambda s\left(\lambda^{-1} v_{0}-\lambda^{-1}|v_{s,\epsilon}-v_{0}|_{C^1}d(x)\right)\ge \lambda s(1-|v_{s,\epsilon}-v_{0}|_{C^1})\psi.
    \end{array}
\end{align}
 Now, consider the set $G=\{x\in\Omega:d(x,\partial [A(y)\cap\Omega_{\epsilon}])\ge\epsilon\}$. If $x\in G$, $b_{\epsilon}(x)=\mathbbm{1}_{A(y)}(x)\Delta\phi_{\epsilon}(x)$, and by definition of $\phi_{\epsilon}$ it occurs
\begin{equation}\label{term_0}
    \begin{array}{l}
-2b_{\epsilon}(x)=2\mathbbm{1}_{A(y)}(x)\phi\star(-\Delta\Pi_{\epsilon})(x)=2\epsilon^{-2}\mathbbm{1}_{A(y)}(x)\int_{B_{1}(0)}(-\Delta\Pi)(y)\phi(x-\epsilon y)dy\\=2\epsilon^{-2}\overline{C}^{-1}\mathbbm{1}_{A(y)}(x)\int_{B_{1}(0)}\phi(x-\epsilon y)dy.
    \end{array}
\end{equation}
Assuming that $\epsilon>0$ is small enough so that $\epsilon^{-1/2}\ge \lambda\overline{C}+2^{-1}N\|\nabla\phi\|_{\infty}\|\nabla\pi\|_{\infty} $, we deduce from (\ref{term_0}) that
\begin{equation}\label{term_1}
    \begin{array}{l}
    -2b_{\epsilon}(x)\ge(2\lambda+\overline{C}^{-1}N\|\nabla\phi\|_{\infty}\|\nabla\pi\|_{\infty}) \epsilon^{-3/2}\mathbbm{1}_{A(y)}(x)\int_{B_{1}(0)}\phi(x-\epsilon y)dy.
\end{array}
\end{equation}
Besides, considering that $\lambda>0$ satisfies (\ref{SMP}), for any $(x,z)\in G\times K_{\epsilon}$ we get
\begin{equation}\label{term_2}
    \begin{array}{l}
    -a_{\epsilon}^2(x)z^{-1}\ge-|a_{\epsilon,A(y)}\nabla\phi_{\epsilon}|^2\underline{v}_{\epsilon}^{-1}\ge-|a_{\epsilon,A(y)}\nabla\phi_{\epsilon}|^2(\epsilon^{-1/2}d(x))^{-1}\\\ge-\overline{C}^{-1}N\|\nabla\phi\|_{\infty}\|\nabla\pi\|_{\infty}\epsilon^{-3/2}\mathbbm{1}_{A(y)}(x)\int_{B_{1}(0)}\phi(x-\epsilon y)dy.
    \end{array}
\end{equation}
Therefore, combining (\ref{term_1}) and (\ref{term_2}), in the set $G$ we end up with
\begin{equation*}
\begin{array}{l}
    f_{\epsilon}(x,z)\ge 2\lambda\epsilon^{-3/2}\mathbbm{1}_{A(y)}(x)\int_{B_{1}(0)}\phi(x-\epsilon y)dy\ge\lambda s\hat{\phi}(x),
    \end{array}
\end{equation*}
with $s=2\epsilon^{-1/2}$. Consequently $B_{s}\subset\Omega\backslash G$, where $B_{s}=\{x\in\Omega: f_{\epsilon}(x,.)<\lambda s\hat{\phi}(x)\}$, and so $|B_s|\le|\Omega\backslash G|<k_{N}\epsilon^{N}$ for some constant $k_{N}>0$. Applying (\ref{B_s}) with $s=2\epsilon^{-1/2}$ and $q\in(N,(4/3)N)$ , we may write, for $\epsilon>0$ small enough,
\begin{equation}\label{1/2}
    |v_{s,\epsilon}-v_{0}|_{C^1}\le\epsilon^{3/4}M_{s,\epsilon}<\epsilon^{1/4}\overline{M}<1/2,
\end{equation}
where $\overline{M}>0$ is some constant independent of $\epsilon$. We deduce from (\ref{lower_sol}) and (\ref{1/2}) that \begin{equation*}
    \check{v}_{\epsilon}\ge \lambda(s/2)\psi=\lambda\epsilon^{-1/2}\psi=\underline{v}_{\epsilon}.
\end{equation*}
Therefore $\check{v}_{\epsilon}\in K_{\epsilon}$, and the existence of a solution of (\ref{singular}) follows from Schauder's fixed point theorem.\\
\textbf{Proof of (\ref{approx}): } Let $v_{\epsilon}\in K_{\epsilon}$ be a solution of (\ref{singular}). We show that 
\begin{align*}
    \int_{\Omega}\left|\nabla v_{\epsilon}-a_{\epsilon,A(y)}\nabla\phi_{\epsilon}\right|^2dx=0.
\end{align*}
 Indeed, multiplying (\ref{singular}) by $v_{\epsilon}$ and integrating over $\Omega$, we may write
\begin{equation*}
\begin{array}{l}
    0=\int_{\Omega}\left[|\nabla v_{\epsilon}|^{2}+\left|a_{\epsilon,A(y)}\nabla\phi_{\epsilon}\right|^2+2div(a_{\epsilon,A(y)}\nabla\phi_{\epsilon})v_{\epsilon}\right]dx\\
    = \int_{\Omega}\left[|\nabla v_{\epsilon}|^{2}-2a_{\epsilon,A(y)}\nabla\phi_{\epsilon}.\nabla v_{\epsilon}+\left|a_{\epsilon,A(y)}\nabla\phi_{\epsilon}\right|^2\right]dx\\
    =\int_{\Omega}\left|\nabla v_{\epsilon}-a_{\epsilon,A(y)}\nabla\phi_{\epsilon}\right|^2dx.
\end{array}
\end{equation*}
As a consequence, we define $\Phi_{\epsilon,y}\equiv v_{\epsilon}$ in (\ref{approx}).\\
$\bullet $ \textbf{Step 4: } Since $u$ is a solution of problem (\ref{TVM}), and thanks to the
positivity of the functions $h$ and $\Phi_{\epsilon,y}$, from (\ref{approx}) we have 
\begin{equation}\label{A24}
\begin{array}{l}
I_{\epsilon}=\int_{m}^{M}\left[ \int_{\Omega }a_{\epsilon,A(y)}(x)\,|\nabla u|^{p(x)-2}\nabla u\nabla \phi_{\epsilon} dx\right]
dy \\ 
=\int_{m}^{M}\left[ \int_{\Omega }|\nabla u|^{p(x)-2}\nabla u\nabla \Phi_{\epsilon,y} dx\right]
dy \\ 
=\int_{m}^{M}\left[ \int_{\Omega }h(x)\Phi_{\epsilon,y}dx\right] dy\geq 0.%
\end{array}
\end{equation}%
Besides, using the same argument as in (\ref{convergence}), we prove that the sequence $I_{\epsilon}$ converges as $\epsilon\rightarrow0$. Therefore from (\ref{A24}) we deduce that $\underset{\epsilon\rightarrow0}{\lim\,}I_{\epsilon}\ge0$, and combining (\ref{A22}) and (\ref{A23}), we end up
with 
\begin{equation*}
\begin{array}{l}
m\,\underset{\epsilon \rightarrow 0}{\lim }\int_{\Omega }a_{\epsilon,\Omega}(x)\,|\nabla
u|^{p(x)-2}\nabla u\nabla \phi_{\epsilon} dx \\ 
\leq \underset{\epsilon \rightarrow 0}{\lim }\int_{\Omega }a_{\epsilon,\Omega}(x)\,f(x)|\nabla
u|^{p(x)-2}\nabla u\nabla \phi_{\epsilon} dx \\ 
\leq M\,\underset{\epsilon \rightarrow 0}{\lim }\int_{\Omega }a_{\epsilon,\Omega}(x)\,|\nabla
u|^{p(x)-2}\nabla u\nabla\phi_{\epsilon} dx.%
\end{array}
\end{equation*}%
The Dominated Convergence theorem and (\ref{A18}) imply that
\begin{equation*}
\begin{array}{l}
m\int_{\Omega }h(x)\phi dx=m\int_{\Omega }|\nabla u|^{p(x)-2}\nabla u\nabla
\phi dx \\ 
\leq \int_{\Omega }f(x)|\nabla u|^{p(x)-2}\nabla u\nabla \phi dx \\ 
\leq M\int_{\Omega }|\nabla u|^{p(x)-2}\nabla u\nabla \phi dx=M\int_{\Omega
}h(x)\phi dx.%
\end{array}%
\end{equation*}%
This leads to the existence of $\gamma \in [m,M]$, depending on $\phi 
$, such that 
\begin{equation*}
\int_{\Omega }f(x)|\nabla u|^{p(x)-2}\nabla u\nabla \phi dx=\gamma
\int_{\Omega }h(x)\phi dx.
\end{equation*}%
This completes the proof.
\end{proof}

\end{document}